\theoremstyle{plain} \newtheorem{lemma}{Lemma}[section] \newtheorem{theorem}{Theorem}  \newtheorem{cor}[lemma]{Corollary}
\newtheorem{prop}[lemma]{Proposition}
\numberwithin{equation}{section} \newtheorem{quest}{Question}
\theoremstyle{definition}
\theoremstyle{remark} \newtheorem{remark}{Remark}[section]
\newcommand{\B}{\mathcal{B}}  \newcommand{\gen}{\mathrm{gen}}   \newcommand{\I}{\mathcal{I}} \renewcommand{\geq}{\geqslant} \renewcommand{\leq}{\leqslant}  \newcommand{\D}{\mathcal{D}}
\DeclareMathOperator{\GL}{GL} \DeclareMathOperator{\SL}{SL}      \DeclareMathOperator{\Ad}{Ad} \DeclareMathOperator{\Eis}{Eis} 
\DeclareMathOperator{\vol}{vol}
 \newcommand{\eps}{\varepsilon} \def\PGL{\operatorname{PGL}}         
\newcommand{\changed}[1]{{\color{black} #1}}  \usepackage{calc} \newsavebox\CBox \newcommand\hcancel[2][0.5pt]{%
  \changed{\ifmmode\sbox\CBox{$#2$}\else\sbox\CBox{#2}\fi%
    \makebox[0pt][l]{\usebox\CBox}%
    \rule[0.5\ht\CBox-#1/2]{\wd\CBox}{#1}}}
\DeclareRobustCommand\widecheck[1]{{\mathpalette\@widecheck{#1}}} \def\@widecheck#1#2{%
  \setbox\z@\hbox{\m@th$#1#2$}%
  \setbox\tw@\hbox{\m@th$#1%
    \widehat{%
      \vrule\@width\z@\@height\ht\z@ \vrule\@height\z@\@width\wd\z@}$}%
  \dp\tw@-\ht\z@ \@tempdima\ht\z@ \advance\@tempdima2\ht\tw@ \divide\@tempdima\thr@@ \setbox\tw@\hbox{%
    \raise\@tempdima\hbox{\scalebox{1}[-1]{\lower\@tempdima\box \tw@}}}%
  {\ooalign{\box\tw@ \cr \box\z@}}} \makeatother
             \newcommand{\tr}{\operatorname{tr}}
\renewcommand{\Bbb}{\mathbb}
\begin{document}

\author{Valentin Blomer} \author{Subhajit Jana} \author{Paul D. Nelson}

\address{Mathematisches Institut, Endenicher Allee 60, 53115 Bonn, Germany} \email{blomer@math.uni-bonn.de}

\address{Queen Mary University of London, Mile End Road, London E1 4NS, UK} \email{s.jana@qmul.ac.uk}

\address{Aarhus University, Ny Munkegade 118,
  DK-8000 Aarhus, Denmark}
\email{paul.nelson@math.au.dk}

\title[Local integral transforms]{Local integral transforms and global spectral decomposition}

\thanks{The first author was supported in part by the DFG under Germany's Excellence Strategy - EXC-2047/1 - 390685813 and ERC Advanced Grant 101054336.  The second author is supported by the EPSRC research grant EP/Z536611/1 (NIA) from UKRI. The third author is supported by a research grant (VIL54509) from VILLUM FONDEN.  Some work towards this paper was carried out while the third author was at the Institute for Advanced Study, supported by the National Science Foundation under Grant No. DMS-1926686.  The paper was finished when all three authors attended the program ``Analytic Number Theory'' at the  Institut Mittag-Leffler, whose excellent working conditions are acknowledged.}

\begin{abstract} 
  We establish an explicit global spectral decomposition of shifted convolution sums and the second moment of automorphic $L$-functions for Maa{\ss} forms with explicit integral transforms as well as explicit inversion formulae over every local field.
\end{abstract}

\subjclass[2010]{Primary: 11M41, 11F70, 11F30} \keywords{shifted convolutions sums, moments of $L$-functions, spectral decomposition, integral transforms}

\setcounter{tocdepth}{2} \maketitle

\maketitle

\section{Introduction}

\subsection{Motohashi's formula}\label{sec:cngtt989nr}
The connection between the cubic moment of $L$-functions for ${\rm GL}(2)$ and the fourth moment of $L$-functions for ${\rm GL}(1)$ has been a very successful theme at the interface of analytic number theory and automorphic forms for several decades. The starting point is Motohashi's beautiful formula \cite[Theorem 4.2]{Mo1}: for a sufficiently nice test function $V : \mathbb{R} \rightarrow \mathbb{R}$, we have
\begin{equation}\label{fourth}
  \begin{split}
    \int_{\Bbb{R}} &  |\zeta(\tfrac{1}{2} + it)|^4 V(t) dt = \text{ main term } + \sum_j\frac{ {L(\tfrac{1}{2}, \psi_j)}^3}{L(1,  \psi_j, \text{Ad})}  \widecheck{V}(t_j) + \text{ similar,}
  \end{split}
\end{equation}
where the sum runs over Maa{\ss} forms $\psi_j$ with spectral parameter $t_j$ for the group ${\rm SL}_2(\Bbb{Z})$. It is remarkable that the integral transform $V \mapsto \widecheck{V}$ is given in a completely explicit, but rather complicated way in terms of hypergeometric functions (see \eqref{trafo} below).  The formula~\eqref{fourth} has a precursor, namely the spectral decomposition of the binary additive divisor problem (\cite[Theorem 3]{Mo}): for $b \in \Bbb{N}$ and a sufficiently nice test function $V : \mathbb{R} \rightarrow \mathbb{R}$, we have
\begin{equation}\label{shifted}
  \begin{split}
    \sum_{n \geq 1} &  \tau(n) \tau(n+b) V\Big(\frac{n}{b}\Big) = \text{ main term} + \sum_j\frac{ b^{1/2} \lambda_j(b) {L(\tfrac{1}{2}, \psi_j)}^2}{L(1,  \psi_j, \text{Ad})}  \widetilde{V}(t_j)  + \text{ similar,}
  \end{split}
\end{equation}
where $\lambda_j(b)$ denotes the corresponding Hecke eigenvalue and $\widetilde{V}$ is as in \eqref{trafo1} below.

The similarity between~\eqref{fourth} and~\eqref{shifted} is no coincidence. The left-hand side of~\eqref{shifted} is essentially an off-diagonal term in~\eqref{fourth} 
and indeed Motohashi uses 
\eqref{shifted} in the proof of~\eqref{fourth}.

Neither~\eqref{fourth} nor~\eqref{shifted} are easy to predict and lie very deep. Only much later, Michel and Venkatesh~\cite[Section 4.5]{MV} provided a conceptual explanation for why a formula of type~\eqref{fourth} should be expected. See~\cite{Wu} for a somewhat different approach, and~\cite{BHKM} for a version with non-archimedean test functions in the context of spectral reciprocity.

In addition to their structural interest, such formulae are extremely useful in analytic number theory. For instance, Iwaniec~\cite{Iw} uses the technology behind~\eqref{fourth} to establish the best results for the fourth moment of the Riemann zeta function in short intervals, and variations of the additive divisor problem have been used as an ingredient in subconvexity bounds~\cite{DFI}.
 
One can turn the tables and try to understand the cubic moment of automorphic $L$-functions in terms of $L$-functions on ${\rm GL}(1)$. The first important works in this direction are by Ivi\'c~\cite{Iv} and, most notably, by Conrey--Iwaniec~\cite{CI} in a twisted setting, leading to a Weyl-type subconvexity bound for automorphic $L$-functions twisted by real characters. The approach of Conrey--Iwaniec was extended in important works of Petrow~\cite{Pe} and Petrow--Young~\cite{PY1, PY2}, while the most general approach to the cubic moment of automorphic $L$-function is contained in~\cite{Ne} and~\cite{BFW}.

Further recent  variations   include  for instance \cite{Fr, Ka, Kw1, Kw2, Kw3, To, Wu2}.  \\

Looking back at Motohashi's formula~\eqref{fourth}, at least two questions spring to mind. The first concerns the integral transforms $\widecheck{V}$ and $\widetilde{V}$ in \eqref{trafo} and~\eqref{trafo1} below that at first sight look rather artificial.
\begin{quest}\label{Q1}
  Do these integral transforms have a structural interpretation as a map on the unitary dual of ${\rm PGL}_2(\Bbb{R})$ in terms of representation-theoretic data, are there analogous formulae for other local fields, and is there an explicit inversion formula of similar shape?
\end{quest}

The second question focuses on    non-split situations where Motohashi's approach is not possible:
\begin{quest}\label{Q2}
  Is there a formula similar to~\eqref{fourth} with the fourth moment of the Riemann zeta function replaced by the second moment of $L$-functions for ${\rm GL}(2)$, and is there also a formula similar to~\eqref{shifted} for Hecke eigenvalues of automorphic forms on ${\rm GL}(2)$?
\end{quest}

In this degree of generality, both questions have remained open until now, despite various attempts over the past 60 years (e.g.\ \cite{Se, Ju2, Sa, Ha-thesis, Mo5, Mo4, BM}). The purpose of this paper is to give complete and affirmative answers to both of them.  Not surprisingly, these questions are intertwined. The first question is of a local nature and puts classical transformation and inversion formulae, such as the Mehler--Fock transform, in a general and structural representation theoretic context. It is therefore of independent interest, but it is also the crucial input for the second question, 
which is of a global nature. 

As a sample of our global results, we give explicit generalizations of Motohashi's formulae to the non-split case: for Hecke--Maass cusp forms $f, g$ for $\SL_2(\mathbb{Z})$ and $b \in \Bbb{N}$, we have
\begin{equation*}
  \int_{\mathbb{R}}
  L(\tfrac{1}{2} + i t, f)
  \overline{L(\tfrac{1}{2} + i t, g)} V(t) 
  \, d t
  = \text{ main term }
  + \sum_j
  \frac{L(\tfrac{1}{2}, \psi_j) c_j }{L(1, \psi_j, \Ad)}  \widecheck{V}(t_j)
  + \text{ similar}
\end{equation*}
and
\begin{equation}\label{shifted-new}
  \sum_{n \geq 1}
  \lambda_{f}(n) \lambda_{g}(n + b)
  V \left( \frac{n}{b} \right)
  =   \sum_j
  \frac{b^{1/2} \lambda_j(b) c_j}{L(1, \psi_j, \Ad)} \widetilde{V}(t_j) + \text{ similar},
\end{equation}
where $c_j$ satisfies 
\begin{equation*}
  \lvert c_j \rvert^2 = L(\tfrac{1}{2}, \pi_f \otimes \pi_g \otimes \pi_{\psi_j}).
\end{equation*}
Even in this most basic case of Maa{\ss} forms for the group ${\rm SL}_2(\Bbb{Z})$, such results have resisted all attempts so far, although there have been several different workaround solutions that worked reasonably well in practice, see e.g.\ \cite{Ha, Ju1, Ju2, Sa}.  These approaches have involved several intermediate steps, based on $\delta$-symbol methods or Poincar{\'e} series, each requiring their own asymptotic analysis. They often degenerate when applied over number fields, with higher ramification, or when studying questions of uniformity (in $f$, $g$, $V$ and $b$).  Our work yields an explicit formula that allows the practicing analytic number theorist to bypass all intermediate steps needed in prior approaches.

Note that these formulae contain the \emph{same} integral transform as in \eqref{fourth} and \eqref{shifted}, but different types of $L$-functions, namely ``canonical'' square-roots in the terminology of \cite{MV}. We establish these formulae in an adelic setting so that they become applicable over any number field and allow ramification and additional test functions at arbitrary finite places with corresponding explicit $p$-adic integral transforms.

The problem can be traced back to Selberg \cite{Se}, who established the analytic continuation of the inner product of two non-holomorphic Poincar\'e series in the context of \eqref{shifted-new} for Maa{\ss} forms, but was unable to perform some explicit analysis (``\emph{We cannot make much use of this function at present}'' \cite[p.\ 14]{Se}). The closest approximation to \eqref{shifted-new}  is \cite{BH} (see also \cite{Mo4}) where an exact spectral decomposition is obtained -- also bypassing all intermediate steps -- with useful bounds in terms of Sobolev norms of $V$, but the integral transform $\widetilde{V}$ remained abstract, based only on the surjectivity of the Kirillov model.  We mention also the recent work of Goldfeld--Hinkle--Hoffstein \cite[Theorem 1.10]{GHH}, who obtain an asymptotic formula for unsmoothed shifted convolution sums in the spirit of \eqref{shifted-new}.

In the following two subsections, we address the above Questions 1 and 2 and state the main results.

\subsection{Local integral transforms}\label{sec:d1be073b0aa9}
The integral transforms relevant for \S~\ref{sec:cngtt989nr} are given explicitly as follows: with
\begin{equation*}
  \mathcal{K}(t, y) := \frac{1}{2}\sum_{\pm}  \Big( 1 \pm \frac{i}{\sinh(\pi t)}\Big) y^{-\frac{1}{2} \mp it}\frac{{\Gamma(\tfrac{1}{2} \pm it)}^2}{\Gamma(1 \pm 2 i t)} F\big( \tfrac{1}{2} \pm it, \tfrac{1}{2} \pm it, 1 \pm 2 i t, -1/y\big),
\end{equation*}
we have
\begin{equation}\label{trafo}
  \begin{split}
    \widecheck{V}(t) =& 2\int_0^{\infty} \Big( \int_{-\infty}^{\infty} V(x) \cos\Big(x \log \frac{y+1}{y}\Big)  \, d x  \Big)  \mathcal{K}(t, y) \frac{dy}{\sqrt{y(1+y)}}
  \end{split}
\end{equation}
and
\begin{equation}\label{trafo1}
  \widetilde{V}(t) = \int_0^{\infty} \mathcal{K}(t, y) V(y) dy.
\end{equation}
(For the sake of comparison, we note that Motohashi uses the notation $\alpha_j = 2/L(1, \psi_j, \text{Ad})$.)  In Motohashi's derivation, these transforms arise in a global argument using, among other ingredients,  Voronoi summation, the Kuznetsov formula, and Ramanujan's expansion of divisor functions in terms of Ramanujan sums.  We explain here how these transformations may be understood, in greater generality, using local representation theory.

Let $F$ be a local field, archimedean or non-archimedean, and $G:=\PGL_2(F)$.  Let $\psi$ be a nontrivial additive unitary character of $F$.  We denote by $\widehat{G}$ the set of isomorphism classes of irreducible representations, $\widehat{G}_{\mathrm{gen}}$ the subset of generic representations and $\widehat{G}^{\mathrm{u}}_{\mathrm{gen}}$ the subset of unitary generic (with respect to $\psi$) representations.  We equip $\widehat{G}$ with the Plancherel measure $d\pi$. All relevant notation and normalization of measures will be explained in \S~\ref{sec21} and \ref{sec22}.  For $ \pi \in \widehat{G}_{\mathrm{gen}}$, we denote by
\begin{equation}\label{gammafactor}
  \gamma(s, \pi, \psi) = \eps(s, \pi, \psi) \frac{L(1 - s, \tilde{\pi})}{L(s, \pi)}
\end{equation}
the local $\gamma$-factor, with the standard $\epsilon$- and $L$-factor, cf.\ e.g.\ \cite[\S3]{Cog}.   We do not vary $\psi$, and so drop it from our notation and write $\gamma(s, \pi) := \gamma(s, \pi, \psi)$. For $i = 1, 2$ let $\pi_i \in \widehat{G}^{\mathrm{u}}_{\mathrm{gen}}$ such that
\begin{itemize}
\item $\pi_1$ is $\vartheta_1$-tempered (see \S~\ref{sec24}), and
\item $\pi_2$ is a $\vartheta_2$-tempered principal series representation (see Remark~\ref{remark:cq2zor9f44}), given by the unitarily normalized induction $\I(\chi_2)$ of some character $\chi_2$ of $F^\times$,
\end{itemize}
for some $0 \leq \vartheta_1, \vartheta_2 < 1/2$. 
We realize $\pi_i$ in its $\psi$-Whittaker model, and let $W_i \in \pi_i$ be a smooth vector.  We denote by $f_2\in\I(\chi_2)$ the vector in the induced model corresponding to $W_2$ via the standard intertwiner, defined for $\chi_2$ of large enough real part by the Jacquet integral
\begin{equation}\label{defn-intertwiner}
  W_2(g) = \int _{N} f_2 (w n g) \psi^{-1}(n) \, d n
\end{equation}
and in general by analytic continuation along holomorphic sections.

Given $W_1 \in \mathcal{W}(\pi_1, \psi)$ and $W_2 \in \mathcal{W}(\pi_2, \psi)$, we define a pair of local weights $h = h_{W_1 \otimes \overline{W}_2}$ and $h^\vee= h^\vee_{W_1 \otimes \overline{W}_2}$, fitting into a diagram (cf.\ \cite[\S2]{Ne})
\begin{equation*}
  \begin{tikzcd}[ampersand replacement=\&]
    \mathcal{W}(\pi_1, \psi) \otimes \mathcal{W}(\pi_2, \psi)
    \arrow[d]
    \arrow[rr,left]
    \&\& \mathcal{W}(\pi_1, \psi) \otimes \mathcal{I}(\chi_2) 
    \arrow[d]
    \\
    \{ h : F^\times \times F^\times \rightarrow \mathbb{C} \}
    \arrow[rr,leftrightarrow, dashed] \&\&
    \{ h^\vee : \widehat{G}_{\gen}^{\mathrm{u}} \times F^\times \rightarrow \mathbb{C} \}.
  \end{tikzcd}
\end{equation*}
In this diagram, 
\begin{equation}\label{def-h}
  h(y_1, y_2) := W_1(a(y_1)) \overline{W_2(a(y_2))},\quad a(y):=\text{diag}(y, 1), 
\end{equation}
is the local version of the weight function in the shifted convolution problem (e.g., as in \cite[(3)]{BH}).  The top horizontal arrow is given by the inverse of the Jacquet integral, say $W_2 \mapsto f_2 \in \mathcal{I}(\chi_2)$.  The local weight
\begin{equation}\label{def-h-vee}
  h^\vee(\pi,y) := \sum_{W\in\B(\pi)}W(a(y))\int_{N\backslash G}W_1(g)\overline{f_2(g)W(g)} \, d g
\end{equation}
arises naturally in the spectral theory approach to the shifted convolution problem \cite{Mo4, BH}, which consists of multiplying a pair of modular forms, spectrally expanding their product, and taking a Whittaker coefficient of that expansion (see \S~\ref{sec36}):
\begin{multline*}
  \sum \frac{\lambda_1(n) \lambda_2(n+b)}{\lvert n(n + b) \rvert^{1/2}} W_1(n) W_2(n + b) \\
  =
  \int e(-b x) \varphi_1 \varphi_2(n(x)) \, d x
  =
  \sum_{\Phi}
  \langle \varphi_1 \varphi_2, \Phi \rangle
  \int \Phi(n(x)) e(-b x) \, d x + \dotsb.
\end{multline*}
From this, expressions like \eqref{def-h-vee} arise as local factors in the setting we consider, thanks to the uniqueness of Whittaker functionals and invariant trilinear forms.
In formula \eqref{def-h-vee}, $\B(\pi)$ is an orthonormal basis of $\pi$ consisting of vectors isotypic with respect to the standard maximal compact subgroup $K$ of $G$. For $W_1(a(.)), W_2(a(.))\in C_c^\infty(F^\times)$, we will see in \S~\ref{sec25} below that the right-hand side of \eqref{def-h-vee} converges absolutely, and we will investigate its decay properties with respect to $\pi$ and $y$.  The second argument of $h^{\vee}$ plays the role of the shift.

\medskip

Our first result clarifies the bottom horizontal arrow in the diagram. It gives an explicit integral transform $h \mapsto h^{\vee}$, and inversion formula, in terms of $\gamma$-factors.

\begin{theorem}\label{thm1}
  Let $\pi_1, \pi_2 \in \widehat{G}^{\mathrm{u}}_{\mathrm{gen}}$ satisfy the above assumptions, 
  let $W_i(a(.))\in C_c^\infty(F^\times)$ and define $h$ by \eqref{def-h}. 
  \begin{enumerate}
  \item\label{GL1-to-GL2} Let $\pi \in \widehat{G}_{\mathrm{gen}}^{\mathrm{u}}$ be $\vartheta$-tempered, where $\vartheta_1+\vartheta_2+\vartheta<1/2$, and let $y\in F^\times$. Then  
    \begin{align*}
      h^\vee(\pi,y)&=\int_{\Re(\chi)=\sigma}{\gamma(\tfrac{1}{2},{\pi}\otimes\chi)}{\gamma(1,\pi_1\otimes\overline{\chi_2}^{-1}\otimes{\chi}^{-1})} \\ &\quad \times \int_{F^\times}h\big(y t,y(t-1)\big){\chi}^{-1}(t)\overline{\chi_2} \left(\frac{t-1}{t}\right) \left|\frac{t}{t-1}\right|^{1/2} \frac{dt}{|t|} \, d\chi
    \end{align*}
for $\vartheta_1+\vartheta_2<\sigma<1/2-\vartheta$.    The integrals converge absolutely, and the $\chi$-integrand is holomorphic for $\chi$ in the strip $\vartheta_1+\vartheta_2<\Re(\chi)<1/2-\vartheta$.
    
  \item\label{GL2-to-GL1} Let $y_1,y_2\in F^\times$, with $y_1 \neq y_2$. For any $\vartheta_1+\vartheta_2<\sigma<1/2$, we have
    \begin{align*}
      h(y_1, y_2) &= \frac{\overline{\chi_2(y_1/y_2)}\sqrt{|y_1y_2|}}{|y_1-y_2|} \int_{\Re(\chi)=\sigma}{\chi}\left(1 - \frac{y_2}{y_1}\right)\gamma(1, \pi_1 \otimes \overline{\chi_2} \otimes {\chi}^{-1})
      \\
                  &\quad \times\int_{\widehat{G}^{\text{\rm u}}_{\text{{\rm gen}}}} h^{\vee}(\pi, y_1- y_2)  \gamma(\tfrac{1}{2}, \pi \otimes {\chi})  d\pi  \, d\chi.
    \end{align*}
    Both the inner integral and the outer $\chi$-integral converge absolutely,\footnote{but the integral is not absolutely convergent as a double integral} and the $\chi$-integrand is holomorphic for $\chi$ in the strip $\vartheta_1<\Re(\chi)<1/2$.
  \end{enumerate}
\end{theorem}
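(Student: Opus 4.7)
The plan is to prove (1) directly and then to derive (2) as its Mellin--Plancherel inverse.

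For (1), I would unfold the defining pairing
\[
I(W) := \int_{N\bs G} W_1(g) \overline{f_2(g) W(g)} \, dg
\]
via the Bruhat decomposition $G = B \sqcup B w N$. Since $W_1$ and $W$ are $(N,\psi)$-equivariant while $\overline{f_2}$ is left $N$-invariant, only the open cell $BwN$ contributes. Parametrising $g = a(y')\, w\, n(x)$ and using the $\psi$-equivariance of $W_1$ to absorb an $x$-Fourier transform collapses one of the Whittaker functions into a Mellin transform; inverting in a character $\chi$ of $F^\times$ then rewrites $I(W)$ as a $\chi$-contour integral of a product of two local zeta integrals --- a GL$(2)\times$GL$(1)$ zeta integral for $\pi_1 \otimes \chi$, and a pairing of $W$ against $f_2$ twisted by $\chi^{-1}$.

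Next I would spectrally sum $I(W)$ over $W \in \B(\pi)$ weighted by $W(a(y))$. The reproducing-kernel property of the Kirillov model of $\pi$ on $L^2(F^\times, d^\times y)$ --- under which the formal sum $\sum_W W(a(y))\overline{W(\cdot)}$ acts as a Dirac mass at $y$ --- collapses this sum into an evaluation at $y$. The substitution $(y_1,y_2) = (yt, y(t-1))$ in the theorem is exactly the coordinate change $y_1 - y_2 = y$, $t = y_1/y$ dictated by the Bruhat open-cell parametrisation. The two $\gamma$-factors arise from two applications of local functional equations: $\gamma(\tfrac12, \pi \otimes \chi)$ from the Kirillov-model functional equation for $\pi \otimes \chi$ (encoding the Weyl-element action on Mellin transforms), and $\gamma(1, \pi_1 \otimes \overline{\chi_2}^{-1} \otimes \chi^{-1})$ from the Jacquet integral~\eqref{defn-intertwiner}, which in this setting is the local Rankin--Selberg functional equation for $\pi_1 \otimes \overline{\chi_2}^{-1} \otimes \chi^{-1}$ at $s=1$.

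Absolute convergence in (1) combines the super-polynomial decay of Mellin transforms of $C_c^\infty(F^\times)$ functions on vertical lines with the polynomial growth of $\gamma$-factors; holomorphy in the strip $\vartheta_1 + \vartheta_2 < \Re(\chi) < 1/2 - \vartheta$ follows from the $\vartheta$-tempered hypotheses, which place all relevant $L$-factor poles strictly outside the strip. Part (2) is obtained by Plancherel-inverting in $\pi$ and Mellin-inverting in $\chi$, with the $\gamma$-factor normalisations cancelling via the local functional equations. The main technical obstacle is the non-absolute convergence warned about in the footnote to (2): the iterated $\pi$- and $\chi$-integrals are only conditionally convergent, so their order cannot be exchanged freely, and the inversion must be justified by careful contour shifts that exploit sufficient decay of $h^\vee(\pi, y)$ in $\pi$ --- itself provable from the temperedness and the $C_c^\infty$ support assumption on $W_i(a(\cdot))$.
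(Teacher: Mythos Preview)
Your approach to part (1) is essentially the same as the paper's: the paper packages your ``reproducing-kernel'' step as a separate proposition (Proposition~\ref{mellin-bessel}, the Mellin transform of the Bessel distribution $\sum_W W(1)\overline{W(g)}$), after first using a basis change to trade $W(a(y))$ for $W(1)$ and a right-translate of $W_1\bar f_2$ by $a(y)$. The two $\gamma$-factors then appear exactly as you describe, from the local functional equation for $\pi\otimes\chi$ and from the Jacquet integral for $\pi_1\otimes\overline{\chi_2}^{-1}\otimes\chi^{-1}$.

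For part (2), however, there is a genuine gap in your plan. You correctly identify the tools (Whittaker--Plancherel in $\pi$, Mellin in $\chi$), but you misdiagnose the obstacle. The issue is \emph{not} primarily the exchange of the $\pi$- and $\chi$-integrals, and decay of $h^\vee(\pi,y)$ in $\pi$ does not resolve it. The real difficulty is the absolute convergence of the outer $\chi$-integral in the formula for $h(y_1,y_2)$: a direct Stirling estimate of $\gamma(\tfrac12,\pi\otimes\chi)\,\gamma(1,\pi_1\otimes\overline{\chi_2}\otimes\chi^{-1})$ on the line $\Re(\chi)=\sigma$ gives only $\asymp (1+|t|)^{-1}$ decay after integrating out $\pi$, which is not absolutely integrable. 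No contour shift within the available strip repairs this. As the paper explicitly flags, the needed rapid decay in $\chi$ is ``highly non-trivial to retrieve from the right hand side.''

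The paper's resolution is not to invert (1) directly, but to start afresh from the Whittaker--Plancherel expansion of $W_1\bar f_2(a(z)wn(x))$ and manipulate until one obtains an identity whose \emph{left} side is a Mellin transform of compactly supported data (coming from $W_i(a(\cdot))\in C_c^\infty(F^\times)$), hence manifestly entire and rapidly decaying in $\chi$. This decay is then transferred to the spectral side, after which Mellin inversion in $\chi$ is legitimate. Your sketch does not contain this mechanism.
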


\subsubsection{Moments of $L$-functions} We keep the notation from the previous result and turn now to the local integral transform underlying the formula \eqref{fourth}. As mentioned before, \eqref{fourth} follows essentially from summing \eqref{shifted} over the shift $b$.  With this in mind, we introduce a further correspondence
\begin{equation*}
  \begin{tikzcd}[ampersand replacement=\&]
    \{ H : F^\times \times \widehat{F^\times} \rightarrow \mathbb{C} \}
    \arrow[rr,leftrightarrow, dashed] \&\&
    \{ h^\sharp : \widehat{G}_{\gen}^{\mathrm{u}} \times \widehat{F^{\times}}_{\text{unit}} \rightarrow \mathbb{C} \}
  \end{tikzcd}
\end{equation*}
given in terms of the previous one by
\begin{equation}\label{capitalH}
  H(y, \chi) := \int_{F^{\times}} h(z, yz) \chi(z) \,d^{\times} z
\end{equation}
and
\begin{equation}\label{hsharp}
  h^{\sharp}(\pi, \chi) := \int_{F^{\times}} h^{\vee}(\pi,  z) \chi(z) \,d^{\times}z.
\end{equation}
The integral defining $h^\sharp$ is absolutely convergent if $\chi$ is unitary, or more generally if $\Re(\chi)>-1/2+\vartheta$ for $\vartheta$-tempered $\pi$, as follows from Lemma \ref{property-h-vee} below.  The functions $H$ and $h^{\sharp}$ are related by the following explicit local transforms.

\begin{theorem}\label{thm2}
  Let $\pi_1$ be $\vartheta_1$-tempered. Let $W_i(a(.))\in C_c^\infty(F^\times)$ and define $h$ be \eqref{def-h}. 
  \begin{enumerate}
  \item\label{fourth-to-cubic} Let $\pi$ be $\vartheta$-tempered such that $\vartheta_1+\vartheta<1/2$. We have
    \begin{equation}\label{hsh}
    \begin{split}
  h^{\sharp}(\pi, \chi_0)&=\int_{\Re(\chi)=\sigma}\gamma(\tfrac{1}{2},\pi\otimes\chi){\gamma(1,\pi_1\otimes\overline{\chi^{-1}_2}\otimes{\chi}^{-1})}\\
                         &\times\int_{F^\times}H(y,\chi_0)\chi_0(1-y)\chi(1-y)\overline{\chi_2(y)} \frac{\sqrt{|y|}}{|1 - y|}\,d^\times y\,d\chi,
                         \end{split}
\end{equation}
for $\vartheta_1<\sigma<1/2-\vartheta$.  The integrals converge absolutely, and the $\chi$-integrand is holomorphic for $\chi$ in the strip $\vartheta_1<\Re(\chi)<1/2-\vartheta$.

  \item\label{cubic-to-fourth} Let $y\in F^\times$ and $\chi_0\in\widehat{F^\times}$ with $\Re(\chi_0)>-1/2$. We have
    \begin{align*}
H(y, \chi_0)&=\frac{\overline{\chi_2^{-1}(y)}\sqrt{|y|}}{\chi_0(1-y)|1-y|}
    \int_{\Re(\chi)=\sigma}{\chi}(1-y) \gamma(1, \pi_1 \otimes \overline{\chi_2} \otimes {\chi}^{-1}) \\
            &\times \int_{\widehat{G}^{\text{{\rm u}}}_{\text{\rm gen}}} h^{\sharp}(\pi, \chi_0)\gamma(\tfrac{1}{2}, \pi \otimes {\chi})\, d\pi\,d\chi.
    \end{align*}
    for $\vartheta_1<\sigma<1/2$. The integrals converge absolutely, and the $\chi$-integrand is holomorphic for $\chi$ in the strip $\vartheta_1<\Re(\chi)<1/2$.
  \end{enumerate}
\end{theorem}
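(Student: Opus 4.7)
The plan is to deduce both parts of Theorem~\ref{thm2} directly from Theorem~\ref{thm1} by integrating against $\chi_0(z)\,d^\times z$ and performing explicit changes of variables.

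For part~\eqref{fourth-to-cubic}, I start from Theorem~\ref{thm1}\eqref{GL1-to-GL2} with $y$ renamed $z$, multiply both sides by $\chi_0(z)$, and integrate in $z \in F^\times$. The left-hand side becomes $h^\sharp(\pi,\chi_0)$ by~\eqref{hsharp}. On the right, I exchange the $z$-integration with the $\chi$-contour and the $t$-integral; this is legitimate because $W_i(a(\cdot)) \in C_c^\infty(F^\times)$ forces $h$ to have compact support. The inner $z$-integral then admits the substitution $u = zt$ and factors cleanly:
\[
\int_{F^\times} h\bigl(zt, z(t-1)\bigr) \chi_0(z)\, d^\times z = \chi_0^{-1}(t)\, H\!\Bigl(\tfrac{t-1}{t}, \chi_0\Bigr).
\]
I then substitute $t \mapsto y := (t-1)/t$, so that $1-y = 1/t$, $\chi^{-1}(t) = \chi(1-y)$, $\chi_0^{-1}(t) = \chi_0(1-y)$, $\overline{\chi_2}((t-1)/t) = \overline{\chi_2}(y)$, $|t/(t-1)|^{1/2} = |y|^{-1/2}$, and the multiplicative Jacobian yields $d^\times t = |y|/|1-y|\, d^\times y$. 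Collecting factors reproduces~\eqref{hsh} exactly.

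For part~\eqref{cubic-to-fourth} I apply Theorem~\ref{thm1}\eqref{GL2-to-GL1} with $(y_1,y_2) = (z,yz)$. The prefactor simplifies via $\chi_2(y_1/y_2) = \chi_2^{-1}(y)$ and $\sqrt{|y_1 y_2|}/|y_1-y_2| = \sqrt{|y|}/|1-y|$ to $\overline{\chi_2^{-1}(y)}\sqrt{|y|}/|1-y|$; moreover $\chi(1-y_2/y_1) = \chi(1-y)$ and the only $z$-dependence sits in $h^\vee(\pi, z(1-y))$. Multiplying by $\chi_0(z)$, integrating over $z$, and substituting $w = z(1-y)$ collapses the $z$-integral to $\chi_0^{-1}(1-y)\, h^\sharp(\pi,\chi_0)$, which combined with the prefactor yields the claimed identity.

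The algebraic manipulations above are essentially forced; the real work is to justify the interchange of orders of integration. In part~\eqref{fourth-to-cubic} this is transparent from the compact support of $h$. In part~\eqref{cubic-to-fourth} one must simultaneously trade the $d^\times z$-integral with the spectral Plancherel integral $d\pi$ and the vertical contour $d\chi$; for this I would appeal to the decay of $h^\vee(\pi,\cdot)$ in the spectral parameter together with the polynomial-in-$\chi$ bounds for $\gamma$-factors on vertical strips, precisely the content of the forthcoming Lemma~\ref{property-h-vee}. The holomorphy assertion for the $\chi$-integrand in the strip $\vartheta_1 < \Re(\chi) < 1/2 - \vartheta$ (respectively $\vartheta_1 < \Re(\chi) < 1/2$) is inherited from the corresponding holomorphy in Theorem~\ref{thm1}. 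Modulo these quantitative inputs, Theorem~\ref{thm2} is a direct corollary of Theorem~\ref{thm1}.
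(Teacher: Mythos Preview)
Your proposal is correct and matches the paper's own proof essentially line for line: both parts are obtained by integrating Theorem~\ref{thm1} against $\chi_0(z)\,d^\times z$, with the same substitutions (your $u=zt$, $y=(t-1)/t$ is the paper's ``$y\mapsto y/t$, $t\mapsto(1-t)^{-1}$'') and the same justifications (compact support of $h$ for part~\eqref{fourth-to-cubic}, Lemma~\ref{property-h-vee} for part~\eqref{cubic-to-fourth}). One small caveat: the holomorphy strip $\vartheta_1<\Re(\chi)$ you claim to ``inherit'' from Theorem~\ref{thm1} is actually stated there as $\vartheta_1+\vartheta_2<\Re(\chi)$, so strictly speaking the inheritance gives the narrower strip; the paper's proof does not address this discrepancy either.
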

  If $F = \Bbb{R}$, then the local gamma factors are quotients of gamma functions, and the corresponding integrals become hypergeometric functions.  If $\pi_1, \pi_2$ are principal series corresponding to even Maa{\ss} forms, this is spelt out classically in \eqref{expl1}, \eqref{expl2} below.  In this case, $h^{\vee}$ (resp.\ $h^\sharp$) is essentially the transform \eqref{trafo1} (resp.\ \eqref{trafo}).  We verify this explicitly in the appendix, including all numerical constants, as an exercise in hypergeometric functions.

Theorems \ref{thm1} and \ref{thm2} give definitive answers to Question \ref{Q1}: they provide  explicit integral representations of the local transforms $h^{\vee}$ and $h^{\sharp}$ over any local field, along with inversion formulae.

\subsection{Global spectral decomposition}\label{sec:cnebsqy7eo}

For the second moment of $L$-functions of \emph{holomorphic} cusp forms, i.e., automorphic representations associated to the discrete series, a spectral decomposition was established by Good \cite{Go}, see also \cite{Mo3}. Similarly, the formula \eqref{shifted} admits a direct analogue if the divisor function is replaced with Hecke eigenvalues of \emph{holomorphic} cusp forms, see e.g.\ \cite[Lemma 4]{JM}. The source of success in both cases is the fact that holomorphic cusp forms are linear combinations of Poincar\'e series and Whittaker functions for the discrete series are exponentials that factorize with respect to additive shifts. This type of argument is not available for Maa{\ss} forms.  

The novelty of the present paper is to use local representation theory in a systematic way.  Locally, there is no difference between the Eisenstein case treated by Motohashi and the cuspidal case of Maa{\ss} forms. 
While the global approach of previous works is unable to establish such a result, we combine the local computations of the previous subsection with the global approach developed in \cite{Ne}.

We will employ the following notation. Let $F$ be a number field with integer ring $\mathcal{O}_F$. Let $S$ be a finite set of places, containing all places that are either archimedean or at which $F$ is ramified. We denote by $\widehat{[G]}$ the set of automorphic representations on $[G] := {\rm PGL}_2(F)\backslash {\rm PGL}_2(\Bbb{A})$, by $\widehat{[G]}_{\mathrm{gen}}$ the subset of generic representations, by $\widehat{[G]}^{\mathrm{u}}_{\mathrm{gen}}$ the further subset of unitary representations and by $\widehat{[G]}^{\mathrm{u}}_{\mathrm{gen}, S}$ the further subset of representations that are unramified outside $S$.  We consider a representation $\pi = \otimes \pi_{\mathfrak{p}} \in \widehat{[G]}^{\mathrm{u}}_{\mathrm{gen}, S}$ and a factorizable automorphic form $\varphi \in \pi$, both unramified outside $S$, and denote by $W_{\varphi} = \prod_{\mathfrak{p}} W_{\varphi, \mathfrak{p}}$ the corresponding Whittaker function.  We always normalize this factorization so that for $\mathfrak{p} \notin S$, the local factor $W_{\varphi, \mathfrak{p}}$ is normalized spherical, i.e.,  the unique ${\rm PGL}_2(\mathfrak{o}_{\mathfrak{p}})$-invariant vector taking the value 1 at the identity.  
We denote by
\begin{equation}\label{Wlambda}
   W_{\pi}^{(S)} = \prod_{\mathfrak{p} \not \in S} W_{\varphi, \mathfrak{p}}
\end{equation}
the ``outside $S$'' part of the Whittaker function, which in classical language is essentially the Hecke eigenvalue: for $b \in F^\times$, we have
\begin{equation*}
  W_{\pi}^{(S)} \big(\big( 
  \begin{smallmatrix}
    b &\\ & 1
  \end{smallmatrix}
  \big)\big) =
  \begin{cases}
    \lambda_{\pi}^{(S)}(b)|b|^{-1/2}_S 
    & \text{ if $b$ is integral outside $S$,} \\
    0
    & \text{ otherwise.}
  \end{cases}
\end{equation*}
Here and in other global situations, we often abbreviate $W(y) := W(a(y))$ to lighten the notation.  We also adopt the standard convention of using a subscripted (resp.\ superscripted) $S$ for a product of components inside (resp.\ outside) $S$.

We normalize in \eqref{spec} a global measure $d\pi$ on $\widehat{[G]}_{\mathrm{gen}}$. In particular, $d\pi$ is \emph{half} the counting measure on the cuspidal spectrum.  The local transform $h^{\vee}$ defined in \eqref{def-h-vee} and computed in Theorem \ref{thm1} generalizes immediately to an $S$-adic situation, and we set
\begin{equation}\label{Sh}
h^{\vee}(\pi_S, b_S) := \prod_{\mathfrak{p} \in S} h^{\vee}(\pi_\mathfrak{p}, b_\mathfrak{p}).
\end{equation}

\begin{theorem}\label{thm-shift}
  Let $\pi_1, \pi_2$ be cuspidal automorphic representations for $\PGL_2$ over a number field $F$. 
  Let $S$ be a finite set of places of $F$, containing all archimedean places and all places where $F$, $\pi_1$ or $\pi_2$ ramify.  Assume that $\pi_{2 , \mathfrak{p} } $ belongs to the principal series for each $\mathfrak{p} \in S$.  
  
  Then for all $0 \neq b \in \mathcal{O}_F[1/S]$ and all functions $h \in C_c^{\infty}(F_S^{\times} \times F_S^{\times})$, we have
  \begin{equation*}
    \sum_{n_1 - n_2 = b}
 W_{\pi_1}^{(S)}\big(\big( 
\begin{smallmatrix}
 n_1 &\\ & 1
\end{smallmatrix}
\big)\big) \overline{ W_{\pi_2}^{(S)}\big(\big( 
\begin{smallmatrix}
 n_2&\\ & 1
\end{smallmatrix}
\big)\big) }
    h(n_1,n_2)
    =
    \int_{ \widehat{[G]}^{\text{\rm u}}_{\text{{\rm gen}}, S}
    }
   W_{\pi}^{(S)}\big(\big( 
\begin{smallmatrix}
 b &\\ & 1
\end{smallmatrix}
\big)\big)
    c_\pi^{(S)}
    h^\vee (\pi_S, b_S) \, d \pi,
  \end{equation*}
  where $c_\pi^{(S)}$ satisfies
   $ |c_\pi^{(S)}|
  =   
  |L^{(S)} (\tfrac{1}{2}, \pi _1 \times \overline{\pi_2} \times \pi )|^{1/2}
/ L^{(S)} (1, \Ad, \pi )$
 if $\pi_{2, \mathfrak{p}}$ is tempered for all $\mathfrak{p}\in S$. 
  \end{theorem}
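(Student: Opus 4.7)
I would pick factorizable forms $\varphi_i\in\pi_i$ that are spherical outside $S$, whose local Whittaker restrictions $y\mapsto W_{\varphi_i,\mathfrak p}(a(y))$ at $\mathfrak p\in S$ are compactly supported, and whose $S$-components realize the given $h$ via $\prod_{\mathfrak p\in S}W_{\varphi_1,\mathfrak p}(a(y_{1,\mathfrak p}))\overline{W_{\varphi_2,\mathfrak p}(a(y_{2,\mathfrak p}))}=h(y_1,y_2)$. Surjectivity of the Kirillov model at each $\mathfrak p\in S$ lets me write an arbitrary $h\in C_c^\infty(F_S^\times\times F_S^\times)$ as a finite sum of such rank-one tensors, so it suffices to treat that case. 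The hypothesis that $\pi_{2,\mathfrak p}$ is principal series for $\mathfrak p\in S$ is exactly what allows $W_{\varphi_2,\mathfrak p}$ to be represented via the Jacquet integral from $f_{2,\mathfrak p}\in\I(\chi_{2,\mathfrak p})$, as needed for Theorem~\ref{thm1}.

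\textbf{Global identity: geometric side.} Following the strategy of \cite{Mo4, BH, Ne}, I would consider the $b$-th additive Fourier coefficient
\begin{equation*}
\int_{[N]}\varphi_1(n(x))\overline{\varphi_2(n(x))}\,\psi(-bx)\,dx
\end{equation*}
and compute it in two ways. Inserting the Whittaker expansions of $\varphi_1$ and $\overline{\varphi_2}$ and carrying out the $x$-integral unfolds this to $\sum_{n_1-n_2=b}W_{\varphi_1}(a(n_1))\overline{W_{\varphi_2}(a(n_2))}$. Splitting each Whittaker function into its $S$-part (which produces $h(n_1,n_2)$ by construction) and outside-$S$ part (which produces $W_{\pi_i}^{(S)}$ by the spherical normalization) yields the left-hand side of the theorem.

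\textbf{Spectral side and factorization.} On the other hand, spectrally expand $\varphi_1\overline{\varphi_2}\in L^2([G])$ and take the $b$-th Fourier coefficient; the constant (residual) spectrum contributes $0$ since $b\neq 0$, so only the generic spectrum survives, giving
\begin{equation*}
\int_{\widehat{[G]}^{\mathrm u}_{\mathrm{gen}}}\sum_{\varphi\in\B(\pi)}\langle\varphi_1\overline{\varphi_2},\varphi\rangle\,W_\varphi(a(b))\,d\pi.
\end{equation*}
Uniqueness of Whittaker functionals and of invariant trilinear forms factorizes the inner sum place by place. Outside $S$, the unramified local triple product integrals (Ichino's formula) assemble, together with the Whittaker coefficient of $\pi$ at $b$, into $W_\pi^{(S)}(a(b))$ times $L^{(S)}(\tfrac12,\pi_1\times\overline{\pi_2}\times\pi)^{1/2}/L^{(S)}(1,\pi,\mathrm{Ad})$ (up to a phase and dealt-with archimedean local factors, using the tempered assumption on $\pi_{2,\mathfrak p}$). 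Inside $S$, after converting $W_{\varphi_2,\mathfrak p}$ to $f_{2,\mathfrak p}$ in the induced model, the inner sum is literally $\prod_{\mathfrak p\in S}h^\vee(\pi_\mathfrak p,b_\mathfrak p)=h^\vee(\pi_S,b_S)$ by the definition \eqref{def-h-vee}. The adjoint $L$-factor at $1$ absorbs the Plancherel normalization of $d\pi$, producing the stated $c_\pi^{(S)}$.

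\textbf{Main obstacle.} The principal technical hurdle is justifying the spectral expansion \emph{after} taking the $b$-th Fourier coefficient, i.e.\ interchanging the spectral integral with $\int_{[N]}$ and with the termwise Whittaker expansion. This reduces to absolute convergence of the spectral integral, which in turn requires sufficient decay of $h^\vee(\pi_S,b_S)$ in the archimedean spectral parameters of $\pi_S$ together with a conductor bound in the non-archimedean parameters. Both are furnished by Theorem~\ref{thm1}(\ref{GL1-to-GL2}): integration by parts on the $\gamma$-factor integral, exploiting the smoothness and compact support of $h$, yields arbitrary polynomial decay in the archimedean case, while the support in $\chi$ forces a hard conductor cut-off in the $p$-adic case. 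A secondary and purely bookkeeping issue is reconciling the various normalizations (Plancherel measure, Ichino's local constants, and the unramified values of $h^\vee$ outside $S$) to obtain the precise form of $c_\pi^{(S)}$; this follows the template already in place in \cite{Ne}.
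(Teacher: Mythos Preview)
Your proposal is correct and follows essentially the same route as the paper: choose factorizable vectors via Kirillov surjectivity, compute $\int_{[N]}\varphi_1\overline{\varphi_2}\,\psi(-b\cdot)$ both by Whittaker expansion and by spectral expansion, discard the residual spectrum since $b\neq 0$, and factorize the generic contribution via uniqueness of trilinear forms (packaged in the paper as Lemmas~\ref{lemma:cj3nf2fkak}--\ref{lem37}). Two small corrections: the rapid decay of $h^\vee(\pi_S,\cdot)$ in $\pi$ is established in the paper directly from the definition \eqref{def-h-vee} via Sobolev bounds (Lemma~\ref{property-h-vee}), logically prior to and independent of Theorem~\ref{thm1}; and Ichino's formula only computes $|c_\pi^{(S)}|$ for cuspidal $\pi$, while the Eisenstein case (Lemma~\ref{lemma:cj7jjxnm7b}, part (1)) is handled by direct Rankin--Selberg unfolding.
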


An exact formula for $|c_\pi^{(S)}|^2$ in all cases is given in Lemma \ref{lemma:cj7jjxnm7b} and its proof.  For $F = \Bbb{Q}$, $S = \{\infty\}$, $\pi_1, \pi_2$ two Maa{\ss} forms for ${\rm SL}_2(\Bbb{Z})$, $b \in \Bbb{Z} \setminus \{0\}$ this is just the classical shifted convolution problem:
\begin{equation}\label{overQ}
  \sum_{n_1 - n_2 = b}
  \frac{\lambda_1(n_1 ) \lambda_2(n_2 )}{ \lvert n_1 n_2  \rvert^{1/2} }
  h(n_1,n_2)
  =
  \int_{\widehat{[G]}^{\text{\rm u}}_{\text{{\rm gen}}, \{\infty\}} }    \frac{\lambda_\pi (b)}{ \lvert b \rvert ^{1/2} }
  c_\pi^{\{\infty\}} 
  h^\vee (\pi , b ) \, d \pi
\end{equation}
where $\pi$ runs over the discrete set of holomorphic cusp forms, the Maa{\ss} cusp forms and the continuous family of Eisenstein series for the group ${\rm SL}_2(\Bbb{Z})$, and $\lambda_1, \lambda_2$ are the usual Hecke eigenvalues of $\pi_1, \pi_2$. The key point here is that the right hand features the integral transform $h^{\vee}(\pi, b)$ given in Theorem \ref{thm1}(1). As mentioned before, despite a lot of work on the shifted convolution problem, an explicit spectral decomposition as in \eqref{overQ} is new even in this special case.

Theorem \ref{thm-shift} can handle much more general situations. To give a basic example, still for $F = \Bbb{Q}$, consider summation conditions of the form $\alpha n_1 + \beta n_2 = d$ with $\alpha, \beta, d \in \Bbb{Z} \setminus\{0\}$ and $(\alpha, \beta) = 1$, which come up often in typical applications (and which lead very naturally to a spectral expansion for the congruence subgroup $\Gamma_0(\alpha\beta)$). To this end, take
\begin{equation*}
  S = \{\infty\} \cup \{ p \mid \alpha\beta\}, \quad b = \frac{d}{\alpha \beta} \in \Bbb{Z}\Big[\frac{1}{\alpha\beta}\Big], \quad h = h_{\infty}\prod_{p \mid \alpha\beta} h_p, \quad h_p = 
  \begin{cases}
 \textbf{1}_{\Bbb{Z}_p^{\times} \times p^{-v_p(\alpha)} \Bbb{Z}_p^{\times}}, & p \mid \alpha,\\\textbf{1}_{p^{-v_p(\beta)}\Bbb{Z}_p^{\times} \times \Bbb{Z}_p^{\times}} & p \mid \beta,
\end{cases}
\end{equation*}
so that the left hand side of the identity in Theorem \ref{thm-shift} reads
\begin{equation*}
  \underset{    n_1 - n_2 = d/(\alpha\beta)  }
  {\underset{ (n_2\alpha n_1 \beta, \alpha\beta) = 1}
      { {\underset{n_1 \in \beta^{-1}\Bbb{Z} \, n_2 \in \alpha^{-1}\Bbb{Z} }
          {\sum \sum}}}}
  \frac{\lambda_1^{(S)} (n_1 ) \lambda_2^{(S)} (n_2 )}{ \lvert n_1 n_2  \rvert_S^{1/2} }
  h_{\infty}(n_1,n_2)  =  
  \underset{\alpha \nu_1 - \beta \nu_2 = d}
  {\underset{(\nu_1\nu_2 , \alpha\beta) = 1}
   {
        {\sum_{\nu_1 \in \Bbb{Z} } \sum_{\nu_2\in  \Bbb{Z}} }}}
  \frac{\lambda_1(\nu_1 )   \lambda_2(\nu_2 )}{ \lvert \nu_1 \nu_2   \rvert^{1/2} }
  h_{\infty}\Big(\frac{\nu_1}{\beta},\frac{\nu_2}{\alpha}\Big) 
\end{equation*}
since $\lambda^{(S)}(\nu/\beta) = \lambda^{(S)}(\nu) = \lambda(\nu)$ for $S = \{\infty\} \cup \{ p \mid \alpha\beta\}$ and $(\nu, \alpha\beta) = 1$. Of course, the functions $h_p$ can encode much more complicated conditions modulo powers of $p$.  \\

We now turn to a similar spectral decomposition for the second moment of automorphic $L$-functions. We denote by $\widehat{[A]}^{\mathrm{u}}_{S}$ the set of unitary characters of $\Bbb{A}^\times/F^{\times}$ unramified outside $S$. The definitions \eqref{capitalH} and \eqref{hsharp} extend to the $S$-adic setting, and also to characters in $\widehat{[A]}^{\mathrm{u}}_{S}$ via the inclusion $F_S^\times \hookrightarrow \mathbb{A}^\times$. 
We denote by
\begin{equation*}
  \mathcal{J} : C_c^\infty(F_S^\times \times F_S^\times) \rightarrow C^\infty(F_S^\times \times F_S^\times)
\end{equation*}
the continuous linear map given on pure tensors $h(y_1, y_2) = W_1(a(y_1)) \overline{W_2(a(y_2))}$, with $W_1, W_2 \in C_c^\infty(F_S^\times)$, corresponding to elements of the Kirillov models of $\pi_1$ and $\pi_2$, respectively,  by
\begin{equation*}
  \mathcal{J}(h)(y_1, y_2) :=   W_1(a(y_1)w)  \overline{ W_2(a(y_2)w)}
\end{equation*}
with  $w=\left(
  \begin{smallmatrix}
    &-1\\
    1& \\
  \end{smallmatrix}
\right)$.  The map $\mathcal{J}$ may be written as an explicit integral transform with kernel given by the tensor product of Bessel functions of $\pi_1$ and $\pi_2$ (see \cite[Section 4.5]{CPS}).

\begin{theorem}\label{thm-moment}
  Let $F$, $\pi_1$, $\pi_2$ and $S$ satisfy the same assumptions as in Theorem \ref{thm-shift}.  For $\chi \in \widehat{[A]}^{\rm u}_S$ and $h = h_{W_1 \otimes \overline{W}_2} \in C^{\infty}_c(F^{\times}_S \times F^{\times}_S)$, define $H$ as in \eqref{capitalH}, and for $\eta \in \widehat{[A]}^{\rm u}_S$, define
  \begin{equation}\label{wetachi}
    w(\eta, \chi) =  \int_{F^{\times}_S}  H(y, \chi) \eta(y)\,  d^{\times} y.
  \end{equation}

Then
\begin{multline*}
  \int_{
      \eta \in \widehat{[A]}^{{\rm u}}_S
    }
 \frac{    L^{(S)}(\tfrac{1}{2}, \pi_1 \otimes \chi \otimes \eta^{-1} )  L^{(S)}(\tfrac{1}{2}, \overline{\pi_2} \otimes \eta)  }{\underset{s=1}{\text{\rm res}}\zeta_F^{(S)}(s)}
    w(\eta, \chi) \, d \eta = \mathcal{M}  +
    \int_{  \widehat{[G]}_{\text{\rm gen}, S}^{\mathrm{u}}} L^{(S)}(\tfrac{1}{2}, \sigma \otimes \chi ) 
    c_\sigma^{(S)}
    h_S^\sharp (\sigma, \chi )
    \, d \sigma
    \end{multline*}
    where 
\begin{displaymath}
\begin{split}
  \mathcal{M}  & = \frac{L^{(S)}(1, \pi_1 \otimes \overline{\pi_2} \otimes \chi)}{L^{(S)}(2, \chi^2)}
                 \int_{F^{\times}_S} h(z, z) \chi(z) \, d^{\times} z \\
               & + \frac{L^{(S)}(1, \pi_1 \otimes \overline{\pi_2} \otimes \chi^{-1})}{L^{(S)}(2, \chi^{-2})}
                 \int_{F^{\times}_S}
                 \mathcal{J}(h)(z, z)
                 \chi^{-1}(z) \, d^{\times} z.
\end{split}
\end{displaymath}
Here $c^{(S)}_\sigma$ is as in Theorem \ref{thm-shift}, and the right hand side has a removable singularity at $\chi=\text{\rm triv}$ if $\pi_1 = \pi_2$.
\end{theorem}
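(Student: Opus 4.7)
The plan is to deduce Theorem \ref{thm-moment} from Theorem \ref{thm-shift} by taking a global Mellin transform in the shift variable $b$ and splitting the resulting double sum over pairs $(n_1,n_2)$ into off-diagonal and diagonal contributions. First, I would rewrite the LHS by expanding each of the Hecke-twisted $L$-functions $L^{(S)}(\tfrac12,\pi_1\otimes\chi\eta^{-1})$ and $L^{(S)}(\tfrac12,\overline{\pi_2}\otimes\eta)$ as a Dirichlet series in outside-$S$ ideals $n_1,n_2$, and then perform the $\eta$-integration against $w(\eta,\chi)/\res_{s=1}\zeta_F^{(S)}(s)$. Tate-type Mellin inversion on $\widehat{[A]}^{\text{u}}_S$ reduces the outside-$S$ piece to a delta relating $n_1$ and $n_2$ through the ratio $n_2/n_1$, while at $S$ it inverts the Mellin transform \eqref{wetachi} back to $H(\cdot,\chi)$. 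The outcome is, in a suitable regularized sense, an arithmetic bilinear form of the shape $\sum_{n_1,n_2}\lambda_{\pi_1}^{(S)}(n_1)\overline{\lambda_{\pi_2}^{(S)}(n_2)}\chi(n_1)H(n_2/n_1,\chi)$, with appropriate absolute-value weights.

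Next, split this double sum into the off-diagonal $n_1\neq n_2$ and the diagonal $n_1=n_2$ parts. For the off-diagonal piece, set $b:=n_1-n_2\in\mathcal{O}_F[1/S]\setminus\{0\}$: unwinding \eqref{capitalH} converts $H(n_2/n_1,\chi)$ into a weighted sum of $h(n_1,n_2)\chi(n_1-n_2)$-type factors, and the resulting inner sum at each fixed $b$ matches exactly the LHS of Theorem \ref{thm-shift}. Applying Theorem \ref{thm-shift} at each $b$ and summing in $b$, the outside-$S$ factors $W_{\sigma}^{(S)}(a(b))\chi(b)$ assemble, via Hecke's integral representation, into $L^{(S)}(\tfrac12,\sigma\otimes\chi)$, while the $S$-adic sum $\sum_{b_S} h^\vee(\sigma_S,b_S)\chi_S(b_S)$ converts, by definition \eqref{hsharp}, to $h_S^\sharp(\sigma,\chi)$. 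This exactly reproduces the spectral integral on the RHS of Theorem \ref{thm-moment}.

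The diagonal $n_1=n_2$ is a twisted Rankin--Selberg diagonal, evaluated via
\[
  \sum_n\frac{\lambda_{\pi_1}^{(S)}(n)\overline{\lambda_{\pi_2}^{(S)}(n)}\chi(n)}{|n|_S^s}=\frac{L^{(S)}(s,\pi_1\otimes\overline{\pi_2}\otimes\chi)}{L^{(S)}(2s,\chi^2)}
\]
at $s=1$, producing the first summand of $\mathcal{M}$ paired with the weight $\int_{F_S^\times}h(z,z)\chi(z)\,d^\times z$. The second summand of $\mathcal{M}$, involving $\mathcal{J}(h)$ and $\chi^{-1}$, arises from the functional equation of this Rankin--Selberg $L$-function: globally it swaps $\chi\leftrightarrow\chi^{-1}$, and at $S$ it acts on the Whittaker models of $\pi_1,\pi_2$ by $W_i(a(y))\mapsto W_i(a(y)w)$---precisely the transform $h\mapsto\mathcal{J}(h)$ of the statement.

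The main obstacle is analytic: none of the Dirichlet series appearing in the steps above converges at the relevant central values, so every identity must be justified by analytic continuation in an auxiliary parameter $s$ together with contour shifts in $s$ and/or $\eta$; the two summands of $\mathcal{M}$ appear as residues encountered during these shifts. The delicate technical point is matching, at the $S$-places, the integral transforms produced by the global functional-equation manipulation with the explicit $\mathcal{J}$; this matching will rely on the local formulae of Theorem \ref{thm2} together with the Bessel/Weyl-element realization of the local functional equation. Finally, when $\pi_1=\pi_2$, the simple poles at $\chi=\text{triv}$ of the two $L^{(S)}(1,\pi_1\otimes\overline{\pi_1}\otimes\chi^{\pm1})$ factors in the two summands of $\mathcal{M}$ cancel against each other, producing the claimed removable singularity.
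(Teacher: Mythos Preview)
Your approach — deduce the moment formula by Mellin-summing the shifted-convolution identity (Theorem \ref{thm-shift}) over the shift $b$ — is different from what the paper does, and it has a real gap.

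The paper does not invoke Theorem \ref{thm-shift}. It evaluates the period $\mathcal{P}(\chi)=\int_{[A]}\varphi_1\overline{\varphi_2}\,\chi$ in two ways: Parseval on $[A]$ gives the left-hand side, while spectral expansion of $\varphi_1\overline{\varphi_2}$ over $[G]$ followed by the $[A]$-period gives the right-hand side. The first summand of $\mathcal{M}$ is the constant-term contribution $\int_{[A]}(\varphi_1\overline{\varphi_2})_N\,\chi$, computed by Rankin--Selberg unfolding. The second summand of $\mathcal{M}$ is a \emph{residue of the Eisenstein spectrum}: the integral $I(\chi)=\int_\sigma I_\sigma(\chi)\,d\sigma$ is first set up for $\Re(\chi)>1/2$ and then meromorphically continued to $\Re(\chi)=0$; crossing picks up the poles of $L^{(S)}(\tfrac12,\sigma(\eta)\otimes\chi)$ at $\eta=\chi\lvert\cdot\rvert^{\pm1/2}$. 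The residue is then computed by an explicit local identity (the $h^\heartsuit$ lemma, Lemma \ref{lemma:cnfnkqp8fc}) that produces exactly $\int_{F_S^\times}\mathcal{J}(h)(z,z)\chi^{-1}(z)\,d^\times z$.

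Your account of the second summand of $\mathcal{M}$ is where the proposal breaks. You write that it ``arises from the functional equation of this Rankin--Selberg $L$-function'', but the functional equation relates $L(s,\pi_1\otimes\overline{\pi_2}\otimes\chi)$ at $s=1$ to the value at $s=0$, not to a second contribution at $s=1$ with $\chi\mapsto\chi^{-1}$; there is no mechanism by which the diagonal alone produces two terms. In your scheme the second term must still come from a contour shift, but on the \emph{off-diagonal} side: after applying Theorem \ref{thm-shift} and summing over $b$, the Eisenstein part of the $\sigma$-integral carries $L^{(S)}(\tfrac12,\sigma(\eta)\otimes\chi)$, and regularizing the divergent $b$-sum by analytic continuation in $\chi$ crosses its poles. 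That is where the $\mathcal{J}(h)$ term lives, and extracting the correct local factor requires exactly the computation the paper carries out in Lemma \ref{lemma:cnfnkqp8fc}. You have not identified this mechanism.

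There is also a smaller but non-trivial slippage earlier. After Mellin inversion in $\eta$ you land on a bilinear form weighted by $H(n_2/n_1,\chi)$, i.e., organized by the \emph{ratio} $n_2/n_1$, whereas Theorem \ref{thm-shift} is organized by the \emph{difference} $b=n_1-n_2$. Your sentence ``unwinding \eqref{capitalH} converts $H(n_2/n_1,\chi)$ into $h(n_1,n_2)\chi(n_1-n_2)$-type factors'' is not correct as stated: unwinding \eqref{capitalH} introduces an additional $F_S^\times$-integral in $z$, and to match Theorem \ref{thm-shift} you must keep that integral, apply the theorem to the dilated weight $(y_1,y_2)\mapsto h(zy_1,zy_2)$ for each $z$, and only afterward substitute $v=b_S z$ to factor the result as $L^{(S)}(\tfrac12,\sigma\otimes\chi)\cdot h_S^\sharp(\sigma,\chi)$. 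This last step is precisely the unfolding from a sum over $b\in F^\times$ to an integral over $\mathbb{A}^\times$ — which is what the paper performs directly, and more cleanly, via the $[A]$-period.
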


Theorems \ref{thm-shift} and \ref{thm-moment} give definitive answers to Question \ref{Q2}: they provide explicit spectral decompositions of the shifted convolution problem and the second moment of $L$-functions for Maa{\ss} forms  with the ``correct'' integral transform that was studied in the previous subsection. 

\begin{remark}\label{remark:cngtue4fjm}
  The main term $\mathcal{M}$ in the formula of Theorem \ref{thm-moment} is well-known from a classical perspective. An approximate functional equation for the product $ L^{(S)}(\tfrac{1}{2}, \pi_1 \otimes \chi \otimes \eta^{-1} ) L^{(S)}(\tfrac{1}{2}, \overline{\pi_2} \otimes \eta)$ contains two terms, one of which contains the root number. After integrating over $\eta$, both of them contribute a diagonal term the sum of which is precisely $\mathcal{M}$.
\end{remark}

\begin{remark}\label{remark:cq2zor9f44}
  We briefly discuss our assumption that the components of $\pi_2$ belong to the principal series.  Consider, for instance, the case where $F = \mathbb{Q}$ and both $\pi_{1,\infty}$ and $\pi_{2,\infty}$ belong to the discrete series (the classical holomorphic setting).  This is the historically ``simpler'' case and can be handled directly via Poincaré series.  Our method could also be adapted by embedding $\pi_{2,\infty}$ into an appropriate principal series representation (using the archimedean subrepresentation theorem, as in the proof of \cite[Theorem 3]{BJN}).

  However, this approach is not robust: for the analogous situation at a non-archimedean place with both $\pi_1$ and $\pi_2$ supercuspidal, it is not clear how to establish a completely explicit spectral decomposition.  The obstacle is the absence of an obvious invariant trilinear form on the tensor product of three supercuspidal representations, which would otherwise play the role of the Rankin--Selberg integral.  For this reason, we retain the principal series assumption throughout the paper.
\end{remark}

\subsection{An application}

As a proof of concept, we consider in a classical setting the following application of shifted convolution sums in short windows, an instance where the archimedean test function depends on additional parameters.  This could probably be obtained other ways (cf.\ \cite[Theorem 1.12]{GHH}), but the explicit formula of Theorem \ref{thm-shift} allows a quick and direct proof, without the need to set up a circle method, Voronoi summation, the Kuznetsov formula or any other heavy machinery often employed in this context.

\begin{theorem}\label{appl}
  Let $1 \leq Y \leq X/10$ be two parameters, $1 \leq b \leq
  X/2$ an integer. Let $f, g$ be two fixed (possibly equal) Hecke--Maa{\ss} eigenforms of level 1. Let $V$ be a fixed smooth weight function with support in $[1, 2]$, and let
  \begin{equation*}
\mathcal{S}(X, Y, b) = \mathcal{S}_{f, g, V}(X, Y, b) := \sum_{n} \lambda_f(n+b)\lambda_g(n) V\Big( \frac{n -X}{Y}\Big).
\end{equation*}
  Then
  \begin{equation}\label{1}
    \mathcal{S}(X, Y, b) \ll_{V, f, g, \varepsilon} 
    \frac{X^{1+\varepsilon}}{Y} (Y^{1/2} + b^{1/2}) \min\Big(b^{\theta}, 1 + \frac{Yb^{1/4}}{X}\Big)
  \end{equation}
  and
  \begin{equation}\label{2}
    \int_{X}^{2X} | \mathcal{S}(x, Y, b) |^2 dx \ll_{V, f, g, \varepsilon} b^{2\theta} X^{2 + \varepsilon} \Big(1 + \frac{b}{Y}\Big)
  \end{equation}
  for every $\varepsilon > 0$, with $\theta \leq 7/64$ an approximation towards the Ramanujan--Petersson conjecture.
\end{theorem}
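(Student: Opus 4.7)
The plan is to apply Theorem \ref{thm-shift} with $F=\mathbb{Q}$, $S=\{\infty\}$, $\pi_1=\pi_f$, $\pi_2=\pi_g$, and the archimedean test function
$$h(y_1,y_2) := \phi(y_1/X)\,(y_1y_2)^{1/2}\, V\!\left(\frac{y_2-X}{Y}\right),$$
where $\phi\in C_c^\infty(\mathbb{R}_{>0})$ is a fixed bump equal to $1$ on $[1/4,4]$. Since $1\le b\le X/2$ and $Y\le X/10$, this $h$ lies in $C_c^\infty(\mathbb{R}_{>0}^\times\times\mathbb{R}_{>0}^\times)$ and agrees with the natural weight on the support of the sum, so
$$\mathcal{S}(X,Y,b) \;=\; b^{-1/2}\!\!\int_{\widehat{[G]}^{\text{u}}_{\text{gen},\{\infty\}}}\!\! \lambda_\pi(b)\, c_\pi^{(\infty)}\, h^\vee(\pi_\infty, b)\, d\pi,$$
where $|c_\pi^{(\infty)}|^2 = L(\tfrac12,\pi_f\times\pi_g\times\pi)/L(1,\pi,\mathrm{Ad})^2$. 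Both bounds are thereby reduced to controlling this spectral integral.

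The heart of the proof is the archimedean analysis of $h^\vee(\pi_\infty,b)$ via Theorem \ref{thm1}(1). Shifting the $\chi$-contour to $\Re(\chi)=0$, writing $\chi=|\cdot|^{i\tau}$, and substituting $u=b(t-1)$ reduces the inner integral to
$$I(\tau) := \int_0^\infty V\!\left(\frac{u-X}{Y}\right)\bigl(1+u/b\bigr)^{-i\tau}\chi_2^{-1}\!\left(\frac{u}{u+b}\right) du.$$
Repeated integration by parts in $u$ (using $\partial_u\log(1+u/b)\asymp 1/X$) gives $I(\tau)\ll_A Y(1+|\tau|Y/X)^{-A}$, so the effective support of the outer $\tau$-integral is $|\tau|\lesssim X/Y$. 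Inserting Stirling for the two $\gamma$-factors, together with stationary phase in $\tau$ (with critical points dictated by $t_\pi$ and $\log(u/(u+b))$), then yields a pointwise estimate of the schematic shape
$$|h^\vee(\pi_\infty,b)| \;\ll\; X^{1+\varepsilon}(Y^{1/2}+b^{1/2})(1+|t_\pi|)^{-1}\qquad(|t_\pi|\ll X/Y),$$
with rapid decay beyond this window. The summand $Y^{1/2}$ reflects the length of the $u$-support; $b^{1/2}$ arises from the scale of the $\tau$-stationary point set by $\log(u/(u+b))\sim b/X$ near the center.

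For \eqref{1} I combine this pointwise bound with the Hecke bound $|\lambda_\pi(b)|\le\tau(b)b^\theta$ and Cauchy--Schwarz over the spectrum, using the moment estimate $\int_{|t_\pi|\le T}|c_\pi^{(\infty)}|^2\, d\pi\ll T^{2+\varepsilon}$ (a consequence of the spectral large sieve plus Lindel\"of-on-average for $L(\tfrac12,\pi_f\times\pi_g\times\pi)$); this produces the first alternative $b^\theta$ in the $\min$. The second alternative $1+Yb^{1/4}/X$ arises by dropping Ramanujan and instead exploiting an improved bound on $h^\vee$: when the $\tau$-stationary point lies outside the $u$-window, one gains extra decay in $b$ scaled by $Yb^{1/4}/X$, which combined with convex $L$-function bounds yields the claimed saving. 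For \eqref{2} I square the spectral formula and integrate over $x\in[X,2X]$; by Parseval applied in the $\tau$-parametrization above, the $x$-integral of $h_x^\vee(\pi_1,b)\overline{h_x^\vee(\pi_2,b)}$ concentrates near $t_{\pi_1}=t_{\pi_2}$ with effective width $O(1/\log X)$, reducing the problem to the spectral large sieve for $\sum|\lambda_\pi(b)|^2|c_\pi^{(\infty)}|^2/(1+|t_\pi|)^2$ together with $|\lambda_\pi(b)|^2\ll b^{2\theta+\varepsilon}$. The chief obstacle is the sharp stationary-phase analysis of $h^\vee$ uniformly across the transition range $|t_\pi|\sim X/Y$ and the precise dependence on $b$ underlying the second alternative in \eqref{1}.
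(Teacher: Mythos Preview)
Your overall strategy—apply Theorem \ref{thm-shift} and analyze $h^\vee$ via Theorem \ref{thm1}—matches the paper, but several key steps diverge and one is a genuine gap.

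\textbf{Contour and residues versus stationary phase.} You keep the $\chi$-contour at $\Re(\chi)=0$ and invoke stationary phase in $\tau$. The paper instead shifts the contour to $\Re s=1+\varepsilon$, crossing simple poles of $\gamma(\tfrac12,\pi\otimes\chi)$ at $s=\tfrac12\pm ir$ with residue $\asymp(1+|r|)^{-1/2}$. This yields a two-term bound
\[
h^\vee(\pi,b)\ll Y\Big(\tfrac{X}{b}\Big)^{-1/2}(1+|r|)^{-1/2}\Big(1+\tfrac{|r|}{X/Y}\Big)^{-A}
+X^\varepsilon\,\tfrac{Yb}{X}\,\min\Big(\tfrac{(X/Y)^2}{r^{2+\varepsilon}},1\Big),
\]
and the exponent $2+\varepsilon$ (strictly $>2$) in the second term is exactly what makes the dyadic spectral sum converge. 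Your schematic bound $X^{1+\varepsilon}(Y^{1/2}+b^{1/2})(1+|t_\pi|)^{-1}$ has neither this shape nor the correct scaling in $b$ and $X$ to reproduce \eqref{1}; on $\Re\chi=0$ the factor $\gamma(\tfrac12,\pi\otimes\chi)$ has unit modulus, so any saving in $r$ must come entirely from stationary phase with the second $\gamma$-factor, and you have not carried this out.

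\textbf{Source of the second alternative in the \(\min\).} This is the real gap. The factor $\min(b^\theta,1+Yb^{1/4}/X)$ does \emph{not} arise from additional decay of $h^\vee$ in $b$. It comes from two different bounds on the Hecke-eigenvalue sum,
\[
\sum_{C(\pi)\ll T}|\lambda_\pi(b)|^2\ll T^2\min\Big(b^{2\theta},\,1+\tfrac{b^{1/2}}{T^2}\Big),
\]
the second of which is \cite[Lemma~12]{BM} (an averaging over the spectrum that wins $b^{1/4}/T$). One inserts this via Cauchy--Schwarz against the triple-product moment $\sum_{C(\pi)\ll T}L(\tfrac12,\pi\otimes\pi_f\otimes\pi_g)\ll T^{2+\varepsilon}$ (Theorem~1 of \cite{BJN}) and the $h^\vee$ bound above, summing dyadically over $T$ up to $X/Y$. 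Your proposal never invokes this spectral average of Hecke eigenvalues, so the derivation of the second alternative is missing.

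\textbf{The second moment \eqref{2}.} The paper's argument is not a Parseval concentration at $t_{\pi_1}=t_{\pi_2}$ with width $O(1/\log X)$; rather one writes the $x$-integral of $h^\vee(\pi,b)\overline{h^\vee(\pi',b)}$ as a double Mellin integral in $(s_1,s_2)$, shifts both contours to $\Re s_j=1+\varepsilon$ (picking up residues at $s_j=\tfrac12\pm ir$, $\tfrac12\pm ir'$), and obtains four terms with explicit decay in $|r-r'|$, $|r+r'|$ and cutoffs at $X/Y$. Each term is then fed into the same Cauchy--Schwarz with the Hecke average and the triple-product moment. Your sketch does not contain this structure and in particular does not produce the intermediate term $b^{2\theta+1/2}X^{3/2+\varepsilon}/Y^{1/2}$ that is dominated by $b^{2\theta}X^{2+\varepsilon}(1+b/Y)$.
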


For $b = 1$, the bound \eqref{1} is non-trivial if $Y \geq X^{2/3 +\varepsilon}$, while the bound \eqref{2} is non-trivial if $Y \geq X^{1/2 + \varepsilon}$. The big advantage of Theorems \ref{thm-shift} and \ref{thm-moment} are their explicit shape in terms of all relevant weight functions and integral transforms.  This is particularly useful if $f_1, f_2$ come, for instance, with large spectral parameters, which should have applications elsewhere.  Moreover, since the estimate is based on an explicit formula, one could analyze any further averages (e.g., over $b$).  One could alternatively consider arithmetic progressions instead of short intervals, which should lead to analogous estimates involving the $p$-adic case of Theorem \ref{thm1}.

\section{Local results}\label{sec2}

This section is organized as follows. Some background material is provided in \S~\ref{sec21} -- \ref{sec24}. The key integral formulae are proved in \S~\ref{sec25} and used in the proofs of Theorems \ref{thm1} and \ref{thm2} in \S~\ref{sec26} and \ref{sec27}. Finally, \S~\ref{sec28} provides some explicit computations of the local transforms for $F = \Bbb{R}$ in classical language.

\subsection{Notation}\label{sec21} Let $F$ be a local field of characteristic zero, archimedean or non-archimedean.  We denote by $|.|$ the normalized absolute value on $F$. 
When $F$ is non-archimedean, we denote by $\mathfrak{o}$ its ring of integers and $q$ the cardinality of its residue class field. 

Set $G:=\PGL_2(F)$. 
Let $A$ (resp.\ $B$, $N$) denote\footnote{We will occasionally use $N$ also to denote an integral exponent, often sufficiently large; this notational overload should introduce no confusion.} the diagonal (resp.\ upper-triangular, unipotent upper-triangular) subgroup of $G$, and $K$ the standard maximal compact subgroup.  We identify $N$ with $F$ via $F\ni x\mapsto n(x):=(
\begin{smallmatrix}
  1&x\\&1
\end{smallmatrix}
)$ and $A$ with $F^\times$ via the map $F^\times \ni y\mapsto a(y):=(
\begin{smallmatrix}
  y&\\&1
\end{smallmatrix}
)$.
We write $n'(x) = (
\begin{smallmatrix}
  1&\\x&1
\end{smallmatrix}
)$.  
We denote by $w$ the long Weyl element $\left(
  \begin{smallmatrix}
    0  &-1\\
    1&0 \\
  \end{smallmatrix}
\right)$ in $G$.

We fix a non-degenerate additive character $\psi$ of $F$, which we also regard as a character of $N$.

In this paper, $\chi$ will usually denote a character of $F^\times$, and we denote the space of characters by $\widehat{F^{\times}}$.  Any $\chi \in\widehat{F^{\times}}$ is of the form $\chi_{\mathrm{unit}}|.|^\sigma$ for some $\sigma\in \mathbb{R}$, which we will denote by $\Re(\chi)$, and some unitary character $\chi_{\mathrm{unit}}$ of $F^\times$.
We regard $\{\chi : \Re(\chi) = \sigma\}$ as a principal homogeneous space for the Pontryagin dual of $F^\times$. 

We define $\widehat{G}$, $\widehat{G}_{\mathrm{gen}}$ and $\widehat{G}_{\mathrm{gen}}^{\mathrm{u}}$ as in \S  \ref{sec:d1be073b0aa9}.

\subsection{Measures}\label{sec22}  We use 
$\psi$ to normalize the Haar measure $d x$ on 
$F$, so that it is self-dual with respect to $\psi$.  We equip 
$F^\times$ with the Haar measure $d^\times y := |y|^{-1} \, d y$.  If $F$ is non-archimedean and $\psi$ is unramified then 
the volume of $\mathfrak{o}^\times$ is $\zeta_F(1)^{-1} = (1 - q^{-1})$. 
These measures transfer to $N$ and $A$. We obtain a Haar measure $dg$ on $G$ by pushforward under the multiplication map
\begin{equation*}
  N \times N \times A \rightarrow G, \quad (n_1, n_2, a) \mapsto n_1 w n_2 a.
\end{equation*}
Permuting the order of the last three factors introduces a Jacobian factor, giving the integral formula
\begin{equation*}
  \int_{G(F)} f = \int_{  F }\int_F
  \int_{  F^\times }
  f (n (x_1 )  a(y) w n (x_2 )) \, \frac{d^{\times} y}{\lvert y \rvert} \, d x _1 \, d x _2 .
\end{equation*}
which descends to an integral formula over $N\backslash G$. 

Suppose for the moment that $F$ is non-archimedean and $\psi$ is unramified.  Then, there are other natural choices of measures on $G$, namely:
\begin{itemize}
\item The Haar measure $d g'$, defined similarly, but using the Haar measure on $A$ given by $\zeta_F(1) d^\times y$, which is normalized to assign volume one to $\mathfrak{o}_{\mathfrak{p} }^\times $.  Clearly 
  $d g ' = \zeta_F(1) d g.$ 
\item The Haar measure $d g ''$, normalized by requiring that it assign volume one to the maximal compact subgroup $G(\mathfrak{o})$.
\end{itemize}
It is verified in [MV, \S3.1.6] that
\begin{equation*}
d g ' = \frac{\zeta_F(1)}{\zeta_F(2)} \, d g ''. 
\end{equation*}
It follows that
\begin{equation}\label{eq:cj8owhrlnp}
  d g =
  \frac{1}{\zeta_F  (1) } \, d g '
=  
\frac{1}{ \zeta_F  (2)} \, d g''.
\end{equation}
In particular, our chosen Haar measure $d g$ on $G$ satisfies (for unramified $\psi$)
\begin{equation*}
\vol ({\rm PGL}_2(\mathfrak{o}), d g) = \frac{1}{\zeta_F(2)}.
\end{equation*}

We equip the character space $\{\chi : \Re(\chi) = \sigma\}$ with the Haar measure $d\chi$ dual to $d^\times y$, so that the corresponding Parseval formula holds. For instance, if $F = \Bbb{R}$, $\psi$ is the standard additive character and $f$ is a function on $\widehat{F^{\times}}$, then
\begin{equation}\label{measurechi}
  \int_{\Re (\chi) = \sigma} f(\chi) \, d\chi =  \frac{1}{2} \sum_{\delta\in \{0, 1\}} \int_{(\sigma)} f(\text{sgn}^{\delta} |.|^{s}) \frac{ds}{2\pi i}.
\end{equation}

We equip $\widehat{G}$ with the local Plancherel measure $d\pi$, supported on the tempered part of $\widehat{G}_{\mathrm{gen}}^{\mathrm{u}}$, that is compatible with $dg$ in the sense that for each $f \in C_c^\infty(G) $, we have 
\begin{equation*}  
  f(1) = \int_{ \widehat{G}_{\mathrm{gen}}^{\mathrm{u}}} \tr(\pi(f)) \, d \pi .
\end{equation*}
The measure $d \pi$ may be described explicitly in terms of local $\gamma$-factors, and in particular, grows at most polynomially in all relevant parameters (e.g., the conductor of $\pi$).

\subsection{Schwartz spaces}

Let $\D$ denote the generalized Laplacian on $G$ as considered in \cite[\S2.3.3]{MV}.  If $F$ is archimedean, then $\D$ can be taken as the differential operator $\sum (1-X_i^2)$, where $\{X_i\}$ is an $\mathbb{R}$-basis of the Lie algebra $\mathfrak{pgl}_2(F)$. If $F$ is non-archimedean, then we denote $K[m]$ to be the standard principal congruence subgroup of $G$ of level $m$. In this case, $\D$ can be taken as $\sum_{m\ge 0} q^me[m]$ where $e[m]$ is the projector on the orthogonal complement of $K[m-1]$-invariant vectors (or functions) inside $K[m]$-invariant vectors (or functions).

As noted in \cite[\S3.1.9]{MV}, the operator $\D$ is self-adjoint (clear by construction for non-archimedean $F$, proved in \cite{NS} for archimedean $F$).  It is positive-definite on unitary representations.  It admits a bounded inverse $\mathcal{D}^{-1}$.  For $N$ sufficiently large in terms of $G$, the inverse power $\D^{-N}$ is of trace class (see \cite[\S2.4.1, (S1c)]{MV}, whose proof is implicit in \cite[\S2.6.3]{MV}; for a more detailed proof in the archimedean case, see \cite[Lemma A.3]{NV}).

If $\pi$ is a unitary representation of $G$, then we define the $d$-th Sobolev norm on $v\in \pi$ by $S_d(v):=\|\D^dv\|_\pi$.

We will speak of ``invariant differential operators'' $D$ on the group $F^\times$ (resp.\ $F$).  When $F$ is archimedean, this carries the usual meaning (e.g., on $\mathbb{R}$, we get the standard derivative map $\partial_x$ and its powers, while on $\mathbb{R}^\times $, we get $y \partial_y$ and its powers).  When $F$ is non-archimedean, we adopt the convention that $D$ is a power of the operator $\sum_{m\ge 0}q^m\tilde{e}[m]$, where $\tilde{e}[m]$ denotes the orthogonal projector on the orthogonal complement of $1+\mathfrak{p}^{m-1}$-invariant (resp.\ $\mathfrak{p}^{m-1}$-invariant) functions inside $1+\mathfrak{p}^{m}$-invariant (resp.\ $\mathfrak{p}^m$-invariant) functions.

We record a standard Paley--Wiener type result concerning the Mellin transform, which gives in particular conditions under which Mellin inversion holds.
\begin{lemma}
  Let $\delta > 0$. The Mellin transform
  \begin{equation*}
    f \mapsto \Big(\chi \mapsto \int_{F^\times} f(y) \chi^{-1}(y)\, d^\times y\Big)
  \end{equation*}
  provides an isomorphism between
  \begin{enumerate}
    [(i)]
  \item the space of all $f \in C^{\infty}(F^{\times})$ with the property that
    \begin{equation}\label{eq:cq2z3yiz5b}
      D f(y) \ll_{D, \sigma} \min(\lvert y \rvert^{\sigma}, \lvert y \rvert^{-\sigma})
    \end{equation}
    for each $0 < \sigma < \delta$ and each invariant differential operator $D$ on $F^\times$, and
  \item the space of all complex-valued functions
    \begin{equation}\label{eq:cq2z3yxxcu}
      \left\{
        \text{characters } \chi : F^\times \rightarrow \mathbb{C} 
      \right\}
      \rightarrow \mathbb{C} 
    \end{equation}
    that are holomorphic in $|\Re(\chi)| < \delta$ and rapidly decaying in $\chi$, where ``rapidly decaying'' means decaying faster than any inverse power of the analytic conductor of $\chi$.
  \end{enumerate}
  The usual Mellin inversion formula holds.
\end{lemma}
\begin{proof}
  Suppose \eqref{eq:cq2z3yiz5b} holds.  From the case $D = 1$, we see that the integral defining the Mellin transform converges absolutely, uniformly for $\Re(\chi)$ in each compact subset of the interval $(-\delta, \delta)$.  On the other hand, we can find $D$ that multiplies the Mellin coefficient at $\chi$ by a scalar comparable to some power of the analytic conductor of $\chi$.  (Take $D = 1 - (y \partial_y)^2$ or $\sum_{m \geq 0} q^m \tilde{e}[m]$ according as $F$ is archimedean or not.)  Applying \eqref{eq:cq2z3yiz5b} with a large power of that $D$ gives the required rapid decay.  In particular, both $f$ and its Mellin transform are integrable, so by the standard result concerning $L^1$-Fourier inversion, we deduce that the Mellin inversion formula holds.

  Conversely, given a function \eqref{eq:cq2z3yxxcu} satisfying the indicated conditions, we may define $f$ via the Mellin inversion formula using any contour $\Re(\chi) = \sigma$ with $\sigma \in (-\delta, \delta)$.  By arguments like in the previous paragraph, we see that \eqref{eq:cq2z3yiz5b} holds.
\end{proof}

For $\sigma > 0$, we define a space $\mathcal{V}_{\sigma}((N, \psi) \backslash G)$ of functions $f : G \rightarrow \mathbb{C}$, as follows.  In the non-archimedean case, this space consists of functions that are right-invariant under some open subgroup of $G$, satisfy the transformation rule $f(ng) = \psi(n) f(g)$ for all $n \in N, g \in G$, and satisfy the decay bound
\begin{equation}\label{sigmabound}
  f(a(y) k) \ll_A (1 + |y|)^{-A} |y|^{\sigma},\quad k\in K.
\end{equation}
In the archimedean case, it consists of all smooth functions $f$ that satisfy the same transformation rule and for which the image $X f$ of $f$ under each fixed invariant differential operator $X$ (i.e., element of the universal enveloping algebra of $G$, acting via right convolution) satisfies the decay bound \eqref{sigmabound}.

We remark that, in the archimedean case, the space $\mathcal{V}_{\sigma} ((N, \psi ) \backslash G)$ for $\sigma > 1/2$ is a subspace of the ``Harish--Chandra Schwartz space adapted to Whittaker models'' in the sense of \cite[\S4]{WallachArxiv}.

It is convenient to translate the bound \eqref{sigmabound} into Bruhat coordinates, where it reads
\begin{equation}\label{sigmaboundBruhat}
  f(a(y)wn(x) ) \ll_A \Big(1 + \frac{|y|}{1+|x|^2}\Big)^{-A} \Big(\frac{|y|}{1+|x|^2}\Big)^{\sigma}.
\end{equation}

\subsection{Representations and models}\label{sec24}

Fix $\vartheta \in [0,1/2)$.  We say that an irreducible representation $\pi$ of $G$ is \emph{$\vartheta$-tempered} if it lies in the discrete series or if, writing $\pi$ as a Langlands quotient of an isobaric sum $\sigma_1 \otimes |\det|^{s_1} \boxplus \sigma_2 \otimes |\det|^{s_2}$ with unitary characters $\sigma_1$ and $\sigma_2 $, we have that each $|\Re(s_i)| \leq \vartheta$.  Then $\pi$ is $0$-tempered in the above sense if and only if it is tempered in the usual sense, i.e., its matrix coefficients lie in $L^{2+\eps}(G)$ for each $\eps > 0$. In what follows, we work exclusively with smooth vectors in such representations.  Thus ``let $v \in \pi$'' is shorthand for ``let $v$ be a smooth vector in $\pi$.''

We recall that a representation $\pi$ of $G$ is \emph{generic} if there is a nonzero $G$-equivariant map $\pi \rightarrow \mathrm{Ind}_N^G\psi$; in that case, provided that $\pi$ is irreducible or belongs to the principal series, the space of such embeddings is one-dimensional.  We may fix a nonzero map, and refer to its image as the \emph{Whittaker model} of $\pi$.  When $\pi$ is irreducible, this map is of course an embedding, and we may identify $\pi$ with its image.  In all cases, we denote a typical element of the Whittaker model of $\pi$ by $W$.  It lies in the space $C^{\infty}((N, \psi)\backslash G)$ of smooth functions satisfying the transformation property
\begin{equation*}
  W(ng)=\psi(n)W(g),\quad n\in N, g\in G.
\end{equation*}
We will require growth estimates for Whittaker function at various boundaries.

\begin{lemma}\label{bound-whittaker}
  Fix $\eta > 0$, $\vartheta < 1/2$ and $N \geq 0$.  Fix invariant differential operators $D_y$ and $D_z$ on $F^\times$ and $F$, respectively.  Let $z \ll 1$.  Let $\pi \in \widehat{G}_{\text{\rm gen}}^{\text{\rm u}}$ be $\vartheta$-tempered.  Then
  \begin{equation*}
    D_y D_z W(a(y)wn(z)), \, D_y D_z W(a(y)n'(z))   \ll \min(|y|^{1/2-\vartheta - \eta}, |y|^{-N})S_d(W),
  \end{equation*}
  where
  \begin{itemize}
  \item the Sobolev norm index $d$ depends at most upon $D_y, D_z,N$ and, in the non-archimedean case, the degree $[\kappa : \kappa_0]$ of the residue field $\kappa$ of $F$ over its prime subfield $\kappa_0$, and
  \item the implied constant depends at most upon $D_y, D_z, N,\eta$.
  \end{itemize}
\end{lemma}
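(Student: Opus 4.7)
The plan is to reduce, via the Iwasawa decomposition, to the classical decay estimates for $W(a(y))$ in a $\vartheta$-tempered generic unitary representation of $\PGL_2(F)$. The non-archimedean case is essentially immediate: for $z\in\mathfrak{o}$ both $n'(z)$ and $wn(z) = \bigl(\begin{smallmatrix}0&-1\\1&z\end{smallmatrix}\bigr)$ lie in $K$, so $a(y)wn(z),\ a(y)n'(z)\in a(y)K$, and one is reduced to controlling $W'(a(y))$ as $W'$ ranges over right $K$-translates of $W$. In the archimedean case, a direct matrix computation yields $a(y)wn(z) = n(x(z))\,a(y')\,k(z)$ with $|y'|\asymp |y|$ uniformly for $z$ in a compact neighborhood of $0$ and $x(z),k(z)$ depending smoothly on $z$; combined with the identity $W(ng)=\psi(n)W(g)$ this again reduces to estimating $W'(a(y'))$ for $W'$ in a Sobolev-controlled family. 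The bound for $a(y)n'(z)$ is handled analogously (and, for $z$ small, even more directly, since $n'(z)$ tends to $1$).

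Next I would invoke the standard estimates for Whittaker functions on the torus. For $|y|$ large, $W(a(y))$ decays super-polynomially: in the non-archimedean case, $y\mapsto W(a(y))$ is supported in $|y|\leq c$ for some $c$ depending on the level of $W$; in the archimedean case, rapid decay follows from Jacquet's asymptotic expansion (or, equivalently, from the second-order ODE satisfied by $W(a(y))$ and its realization via $K$-Bessel type functions). For $|y|$ small, the Kirillov/exponent expansion for $\pi$ gives $|W(a(y))|\ll |y|^{1/2-\vartheta-\eta}S_d(W)$; the slack $\eta>0$ absorbs any logarithmic contributions that appear in the degenerate case of coincident exponents. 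In both regimes, the implied constants can be made independent of the particular $\vartheta$-tempered $\pi$ by passing through the Sobolev norm $S_d(W)$.

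To incorporate the operators $D_y$ and $D_z$, note that in the archimedean case they are encoded by fixed elements of the universal enveloping algebra acting by right convolution (differentiation along $A$ for $D_y$, and along $N$ or $N'$ for $D_z$), while in the non-archimedean case they are the level-indexed projection operators described in \S\ref{sec21}; in either case, applying them produces a new smooth Whittaker function $\widetilde W$ with $S_{d}(\widetilde W)\ll S_{d'}(W)$ for a suitably larger $d'$. Combined with a Sobolev-embedding estimate for pointwise values of Whittaker functions in terms of Sobolev norms, one obtains the desired pointwise bound, uniformly in $\pi$.

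The main technical point is the near-zero asymptotic $|y|^{1/2-\vartheta-\eta}$: the bound itself is classical, but the required uniformity across all $\vartheta$-tempered principal series (including the confluent case where the two exponents coincide) and the discrete series is precisely why the $\eta>0$ slack is built into the statement. The rest of the argument consists of carefully tracking how the Iwasawa change of variables, the derivatives $D_y,D_z$, and the Sobolev-embedding step cost Sobolev indices, and verifying that these costs depend only on $D_y,D_z,N,\eta$ and, in the non-archimedean case, on $[\kappa:\kappa_0]$, as asserted.
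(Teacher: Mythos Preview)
Your proposal is correct and follows essentially the same strategy as the cited reference: the paper's own proof consists solely of a citation to \cite[\S2.4.1 and \S3.2.3]{MV}, and what you have sketched (Iwasawa reduction to torus values, Kirillov-model asymptotics near $0$ with the $\eta$-slack for degenerate exponents, rapid decay at infinity, and Sobolev bookkeeping for the derivatives and right translates) is precisely the content of those sections of Michel--Venkatesh.
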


\begin{proof}
  \cite[\S2.4.1 and \S3.2.3]{MV}.
\end{proof}

In particular, Whittaker functions lie in $\mathcal{V}_{\sigma}((N, \psi)\backslash G)$ for 
$\sigma > 1/2 - \vartheta$.

For $\pi \in \widehat{G}_{\gen}^{\mathrm{u}}$, we can define an invariant inner product on $\pi$ by integrating in the Kirillov model, namely, by the formula
\begin{equation}\label{eq:cnebswcf2w}
  \langle W_1, W_2\rangle:=\int_{F^\times}W_1(a(y))\overline{W_2(a(y))}\,d^\times y.
\end{equation}
In particular, when we speak of orthonormal bases for $\pi$, we refer to those defined by this inner product.

Let $\chi$ be a character of $F^\times$, which we extend naturally to $NA$. We denote by $\I(\chi)$ the normalized parabolic induction of $\chi$ from $N A$ to $G$. If $f\in\I(\chi)$, then we have
\begin{equation}\label{transformation-f}
  f(n(x)a(y)g) = \chi(y)|y|^{1/2}f(g).
\end{equation}
An explicit intertwiner $f \mapsto W_f$ from $\I(\chi)$ to its Whittaker model is given as in \eqref{defn-intertwiner}, and can be recast as
\begin{equation}\label{defn-intertwiner-explicit}
  W_f(a(y)) = \lvert y \rvert^{1/2} \chi^{-1}(y) \int_{ F} f( w n(x)) \psi^{-1}(n(x y)) \, d x. 
\end{equation}

\begin{lemma}\label{principal-series-expansion}
  For $f\in\I(\chi)$, we have
  \begin{equation}\label{eqn:d1be069ddc66}
    f(wn(x))=\int_{F^\times}W_f(a(y))\chi(y)|y|^{-1/2}\psi(n(xy)) \, d y.
  \end{equation}
  The integral converges absolutely for $\Re(\chi)>-1/2$.  Moreover, if $F^\times \ni y \mapsto W_f(a(y)) \chi(y) |y|^{-1/2}$ extends to a smooth function on $F$, then $f(wn(.))$ is a Schwartz--Bruhat function on $F$.
\end{lemma}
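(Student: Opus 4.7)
My plan is to interpret the lemma as a Fourier inversion statement: formula \eqref{defn-intertwiner-explicit} exhibits
\[
\widehat{F}(y) := W_f(a(y))\chi(y)|y|^{-1/2}
\]
as the additive Fourier transform (with respect to $\psi^{-1}$) of $F(x) := f(wn(x))$, and the claimed identity \eqref{eqn:d1be069ddc66} recovers $F$ from $\widehat{F}$ by inverse Fourier transform. I will proceed in three steps: prove absolute convergence of the right-hand side, establish the formula for $\Re(\chi)>0$ by classical Fourier inversion, and extend by analytic continuation.

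For convergence, I will split the integral at $|y|=1$. For $|y|\geq 1$, Lemma~\ref{bound-whittaker} supplies super-polynomial decay of $W_f(a(y))$, which dominates the polynomial factor $\chi(y)|y|^{-1/2}$ and handles the tail for any $\chi$. For $|y|<1$, I will invoke the Kirillov-model structure of the principal series $\I(\chi)$ of $\PGL_2(F)$ (the semisimplification of the Jacquet module carries the two characters $\chi$ and $\chi^{-1}$ twisted by $\delta_B^{1/2}$), which gives an asymptotic expansion
\[
W_f(a(y)) = \chi(y)|y|^{1/2}A(y) + \chi^{-1}(y)|y|^{1/2}B(y)
\]
with $A,B$ locally constant near $0$ in the non-archimedean case and smooth in the archimedean case (with at worst $\log|y|$ corrections in the degenerate case $\chi^2=1$). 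Thus $\widehat{F}(y) \ll |y|^{2\Re(\chi)}+1$ up to logarithmic factors, which is integrable against $dy$ on $\{|y|<1\}$ exactly when $\Re(\chi)>-1/2$.

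For the formula itself I will first treat $\Re(\chi)>0$: the Iwasawa decomposition of $wn(x)$ yields $|f(wn(x))|\ll (1+|x|^2)^{-1/2-\Re(\chi)}$, so $F\in L^1(F)$; then \eqref{defn-intertwiner-explicit} converges absolutely, confirming $\widehat{F}$ as defined above, and $\widehat{F}\in L^1$ by the previous step. Since $F$ is continuous, classical Fourier inversion yields \eqref{eqn:d1be069ddc66} pointwise. To extend to $-1/2<\Re(\chi)\leq 0$, I will fix the restriction $f|_K$ and vary $\chi$, producing a holomorphic family of flat sections $f_\chi\in\I(\chi)$: both sides of \eqref{eqn:d1be069ddc66} are then holomorphic in $\chi$ throughout the strip $\Re(\chi)>-1/2$, the right-hand side by dominated convergence with $\chi$-uniform bounds coming from the previous step. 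Agreement on $\Re(\chi)>0$ and the identity theorem then force agreement on the full strip.

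For the final Schwartz--Bruhat assertion, if $\Phi(y):=W_f(a(y))\chi(y)|y|^{-1/2}$ extends smoothly to $F$, then combining this regularity at $0$ with the rapid decay at infinity from Lemma~\ref{bound-whittaker} (applied also to higher Sobolev translates in the archimedean case to control derivatives) shows that $\Phi$ is Schwartz--Bruhat on $F$. Since the additive Fourier transform preserves the Schwartz--Bruhat space, the inverse Fourier transform $f(wn(\cdot))$ of $\Phi$ is likewise Schwartz--Bruhat. The main obstacle I anticipate is cleanly handling the Kirillov-model asymptotics near $y=0$ uniformly across archimedean and non-archimedean fields, and in particular absorbing the logarithmic corrections arising in the degenerate archimedean case $\chi^2=1$ into the $|y|^{2\Re(\chi)}$ integrability bound; a secondary point is ensuring sufficient $\chi$-uniformity in the dominated-convergence argument to make the identity-theorem step rigorous.
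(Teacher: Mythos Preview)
Your proof is correct and follows essentially the same Fourier-inversion approach as the paper, which simply cites Lemma~\ref{bound-whittaker} for absolute convergence, defers the identity \eqref{eqn:d1be069ddc66} to \cite[eq.\ (7.4)]{Ne}, and invokes Fourier-preservation of Schwartz--Bruhat for the final assertion. Your argument is more self-contained; note in particular that for the convergence near $y=0$ you do not need the refined Kirillov asymptotics---the cruder bound $W_f(a(y)) \ll |y|^{1/2-\vartheta-\eta}$ from Lemma~\ref{bound-whittaker} (with $\vartheta = |\Re(\chi)|$) already yields integrability for $\Re(\chi)>-1/2$, which dissolves the obstacle you flagged about logarithmic corrections in the degenerate case.
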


\begin{proof}
  The absolute convergence of the integral follows from Lemma \ref{bound-whittaker}.  The formula \eqref{eqn:d1be069ddc66} is verified in \cite[eq.\ (7.4)]{Ne}.  For the final assertion, we use that the Fourier transform preserves the space of Schwartz--Bruhat functions.
\end{proof}

An important tool is the local functional equation for $\GL(2)\times\GL(1)$; we refer to \cite[Theorems 3.2 and 3.6]{Cog}.  In this paper, we use the following particular form of the local functional equation. We recall the local gamma factor in \eqref{gammafactor}. 

\begin{lemma}\label{local-functional-equation}
  For $x\in F$, $W\in \pi$ and $\chi$ a character of $F^\times$ we have  
  \begin{equation*}
    \int_{F^\times}W(a(y)wn(x))\chi^{-1}(y)\, d^\times y=\gamma(1/2,\pi\otimes\chi)
    \int_{F^\times}W(a(y))\psi(n(xy))\chi(y) \, d^\times y .
  \end{equation*}
  The above integrals converge absolutely for $-1/2 + \vartheta < \Re(\chi) < 1/2 - \vartheta$, in which case the gamma factor is polefree.
\end{lemma}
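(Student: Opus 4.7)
The plan is to reduce the general-$x$ identity to the classical local functional equation at $x=0$ via a right-translation trick. The key observation is that if $W \in \mathcal{W}(\pi,\psi)$, then so is the right translate $W' := \rho(n(x))W$ defined by $W'(g) := W(g n(x))$, since the Whittaker model is right-$G$-stable.

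First, I would invoke the standard Jacquet--Langlands local functional equation at $s=1/2$, as in \cite[Theorem 3.1]{Cog}, applied to $W'$: for any character $\chi$ of $F^\times$ with $-1/2+\vartheta<\Re(\chi)<1/2-\vartheta$,
\begin{equation*}
  \int_{F^\times} W'(a(y)w)\, \chi^{-1}(y)\, d^\times y \;=\; \gamma(1/2,\pi\otimes\chi) \int_{F^\times} W'(a(y))\, \chi(y)\, d^\times y.
\end{equation*}
This is the case $x=0$ of the claim, using that $\pi$ is self-contragredient on $\mathrm{PGL}_2$ and that the image of $W'$ under the action of $w$ realizes the contragredient Whittaker model. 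Next, I would unfold the definition of $W'$ on both sides. The left-hand side becomes $\int W(a(y)wn(x))\chi^{-1}(y)\,d^\times y$, matching the claim. For the right-hand side, a direct matrix computation gives $a(y)n(x)=n(yx)a(y)$, so the left $N$-equivariance of $W$ yields
\begin{equation*}
  W(a(y)n(x)) = \psi(yx)W(a(y)) = \psi(n(xy))W(a(y)),
\end{equation*}
and substituting produces exactly the right-hand side of the claim.

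The remaining point is to verify absolute convergence in the claimed strip and the polefree property of $\gamma(1/2,\pi\otimes\chi)$. The RHS integrand is bounded by Lemma \ref{bound-whittaker} as $|W(a(y))|\ll\min(|y|^{1/2-\vartheta-\eta},|y|^{-N})$, yielding absolute convergence for $\Re(\chi)>-1/2+\vartheta$. For the LHS, one rewrites $a(y)wn(x)=a(y)n'(-x)w$ using $wn(x)=n'(-x)w$, then substitutes $y\mapsto y^{-1}$ (using $a(y^{-1})w=wa(y)$ in $\mathrm{PGL}_2$) to reduce to a bound of the form $W(wn(xy)a(y))$, which is again controlled by Lemma \ref{bound-whittaker}; this yields convergence for $\Re(\chi)<1/2-\vartheta$. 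Finally, in this strip both $L(1/2,\pi\otimes\chi)$ and $L(1/2,\tilde\pi\otimes\chi^{-1})$ are polefree since $\pi$ is $\vartheta$-tempered, so $\gamma(1/2,\pi\otimes\chi)$ is holomorphic there.

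The main (mild) obstacle is matching conventions: the classical functional equation is often phrased in terms of the contragredient Whittaker function via the long Weyl element, and one must confirm that for $\mathrm{PGL}_2$ the operator $\rho(w)$ implements this identification so that the $s=1/2$ specialization takes the symmetric form used above. Once this is established, the twist by $n(x)$ is entirely formal and the identity follows with no additional analytic input beyond the Whittaker decay estimates of Lemma \ref{bound-whittaker}.
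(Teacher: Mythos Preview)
Your proposal is correct and follows essentially the same route as the paper's one-line proof: both reduce to the standard $\GL(2)\times\GL(1)$ local functional equation in \cite[Theorem 3.1]{Cog}, translate it into the form $\int W(a(y)w)\chi^{-1}(y)\,d^\times y = \gamma(1/2,\pi\otimes\chi)\int W(a(y))\chi(y)\,d^\times y$ via the identity $\widetilde{W}(a(y))=W(wa(y)^{-t})=W(a(y)w)$ in $\PGL_2$, and then bring in the $n(x)$ by right translation together with left $\psi$-equivariance. One minor remark: your convergence argument for the left-hand side is more circuitous than necessary, since Lemma \ref{bound-whittaker} already bounds $W(a(y)wn(x))$ directly for fixed $x$, so no substitution $y\mapsto y^{-1}$ is needed.
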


\begin{proof}
  The proof follows from what is stated in the cited reference using the formula $\widetilde{W}(g):=W(wg^{-t})$ and left $\psi$-equivariance of $W$.
\end{proof}

\subsection{Whittaker inversion and Bessel distribution}\label{sec25}
We record the Whittaker--Plancherel formula for $G$: 
for $\phi \in \mathcal{V}_{\sigma}((N, \psi)\backslash G)$ with $\sigma > 1/2$, we have
\begin{equation}\label{whittaker-plancherel}
  \phi(h)=\int_{\widehat{G}_{\mathrm{gen}}^{\mathrm{u}}}\sum_{W\in \mathcal{B}(\pi)}W(h) \left( \int_{N\backslash G}\phi(g)\overline{W(g)} \, d g \right)  \, d\pi
\end{equation}
where $\mathcal{B}(\pi)$ is an orthonormal basis of $\pi$ consisting of $K$-isotypic vectors. Moreover, $d\pi$ is supported on the tempered subset of $\widehat{G}^{\mathrm{u}}_{\mathrm{gen}}$.

We first justify that for $\sigma > 1/2$, the right hand side of \eqref{whittaker-plancherel} converges absolutely for all $\phi \in \mathcal{V}_\sigma ((N, \psi ) \backslash G)$, because we need this argument at several places.

Indeed, by Lemma \ref{bound-whittaker} with $\vartheta = 0$ and \eqref{sigmabound}, we see that the inner integral converges absolutely.  By repeated partial integration (using the operator $\mathcal{D}$), we see that the inner integral is moreover rapidly decaying as a function of (the conductor of) $\pi$ and (the Casimir eigenvalue or depth of) the $K$-type, hence the outer integral-sum is again absolutely convergent.

More intrinsically, we may write $\int_{N \backslash G} \phi \overline{W} = \langle V, W \rangle$ for some smooth vector $V \in \pi$; by the pointwise expansion $V(h) = \sum_{W \in \mathcal{B}(\pi)} \langle V, W \rangle W(h)$, we see that the sum equals $V(h)$. This is independent of the choice of basis, provided the expression converges absolutely. This holds in particular for $G$-translates of such bases, since these are isotypic with respect to a conjugate of $K$.  We will use this observation in Lemma \ref{bessel-transform} below.

Once we have justified the absolute convergence, we see that both sides of \eqref{whittaker-plancherel} define continuous linear functionals on that space. Since $K$-finite vectors are dense in $\mathcal{V}_\sigma ((N, \psi ) \backslash G)$, it suffices to verify the formula \eqref{whittaker-plancherel} for $K$-finite functions.  The required formula is then established for archimedean $F$ in \cite[Chapter 15]{Wal} (updated in \cite[Theorem 61]{WallachArxiv}) and for non-archimedean $F$ in \cite[p. 160]{De}.

\begin{remark}
  The cited references establish pointwise Whittaker--Plancherel theorems of the required shape (and in much greater generality), but it would be tedious to check that their normalizations match precisely with ours.  As a check that this is indeed the case, we appeal to the arguments of Sakellaridis--Venkatesh \cite[Theorem 6.3.4]{SV}, who give a short deduction of an $L^2$-Whittaker--Plancherel theorem (for non-archimedean $F$) from the Plancherel theorem for $L^2(G)$.  Their arguments validate \eqref{whittaker-plancherel} provided that the unitary structure on each $\pi$ is normalized so that we have the following evaluations of regularized integrals:
  \begin{equation*}
    \int_{N}^* \psi^{-1}(x) \langle n(x) W, W \rangle \, d x
    = \lvert W(1) \rvert^2.
  \end{equation*}
  That such identities hold in our context follows, via a Fourier inversion argument noted in the second paragraph of the proof of \cite[Lemma 3.4.2]{MV}, from the fact that we have normalized the unitary structure to be given in the Kirillov model with respect to the measure $\frac{d y}{\lvert y \rvert}$.
\end{remark}

By the previous discussion, we conclude the following. 
\begin{lemma}\label{bessel-transform}
  Let $\phi\in \mathcal{V}_{\sigma}((N, \psi)\backslash G)$, with $\sigma > 1/2$. 
  Let $h\in G$ and $\pi\in \widehat{G}_{\text{\rm gen}}^{\text{\rm u}}$. The sum
  \begin{equation*}
    \sum_{W\in\B(\pi)}W(h)\int_{N\backslash G}\phi(g)\overline{W(g)}dg
  \end{equation*}
  converges absolutely for any $K$-isotypic orthonormal basis and equals
  \begin{equation*}
    \sum_{W\in\B(\pi)}W(1)\int_{N\backslash G}\phi(gh)\overline{W(g)}dg.
  \end{equation*}
\end{lemma}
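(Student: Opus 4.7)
My plan is to introduce the Whittaker function
\[
V(h) := \sum_{W \in \B(\pi)} W(h) \int_{N\backslash G} \phi(g) \overline{W(g)} \, dg
\]
on $G$ and to identify both sides of the claimed identity with $V(h)$. The left-hand side equals $V(h)$ by definition. For the right-hand side, I would exploit right-invariance of $dg$ on $N \backslash G$ to substitute $g \mapsto g h^{-1}$, giving
\[
\int_{N\backslash G} \phi(gh) \overline{W(g)} \, dg = \int_{N\backslash G} \phi(g) \overline{(R(h^{-1})W)(g)} \, dg,
\]
where $R$ denotes right translation. Combined with $W(1) = (R(h^{-1}) W)(h)$, the right-hand sum becomes the analogous expansion of $V(h)$ in the translated orthonormal basis $R(h^{-1}) \B(\pi)$, whose members are isotypic for the conjugate maximal compact subgroup $h^{-1} K h$. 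Hence the two sums agree provided the pointwise expansion of $V$ is intrinsic to $\pi$, i.e.\ independent of the choice of basis isotypic for any maximal compact subgroup of $G$.

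Absolute convergence, and thus well-definedness of $V$ with respect to \emph{any} such basis, proceeds exactly as in the paragraph preceding the lemma, specialized from an integral over $\widehat{G}^{\mathrm{u}}_{\gen}$ to the single fixed representation $\pi$. Since $\pi \in \widehat{G}^{\mathrm{u}}_{\gen}$ is automatically $\vartheta$-tempered for some $\vartheta < 1/2$, Lemma \ref{bound-whittaker} yields $|W(h)| \ll_h S_d(W)$ for appropriate $d$. For the coefficient, self-adjointness of $\D$ together with the fact that each $W \in \B(\pi)$ is a $\D$-eigenvector with eigenvalue $\lambda_W \geq 1$ (encoding the $K$-type) yields the integration-by-parts identity
\[
\int_{N\backslash G} \phi(g) \overline{W(g)} \, dg = \lambda_W^{-N} \int_{N\backslash G} (\D^N \phi)(g) \overline{W(g)} \, dg
\]
for every $N \geq 0$, which combined with Cauchy--Schwarz and the trace-class property of $\D^{-N}$ for $N$ sufficiently large produces absolute convergence. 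The identical estimate applies verbatim to the translated basis $R(h^{-1}) \B(\pi)$.

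To close the argument, I would observe that $V$, viewed as a smooth vector in $\pi$, is uniquely characterized as the element whose inner product with every $W \in \pi$ equals $\int_{N \backslash G} \phi \overline{W} \, dg$; this characterization is basis-free, so any orthonormal expansion of $V$ in an isotypic basis recovers the same pointwise function. Smoothness of $V$ follows from the $\D$-continuity inherent in its construction, and smoothness justifies the passage from the Hilbert-space Parseval expansion to its pointwise evaluation at $h$. The principal technical point is the absolute-convergence bookkeeping, which is the main obstacle but one already navigated in the preceding discussion; the geometric step—reindexing via $R(h^{-1})$—is then essentially a bookkeeping identity.
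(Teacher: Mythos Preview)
Your proof is correct and follows essentially the same approach as the paper: both arguments establish absolute convergence via the $\D$-integration-by-parts mechanism described just before the lemma, identify the sum intrinsically as the pointwise value $V(h)$ of a smooth vector $V\in\pi$, and then obtain the second expression by the change of variable $g\mapsto gh$ together with passage to the translated orthonormal basis $\pi(h^{-1})\B(\pi)$ (isotypic for $h^{-1}Kh$). Your write-up is more explicit about the bookkeeping, but the underlying idea is identical to the paper's one-line proof.
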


\begin{proof}
  This follows by changing the basis to $\{\pi(h)W\}_{W\in\B(\pi)}$ and a change of variable $g\mapsto gh$.
\end{proof}

Together with Lemma \ref{bound-whittaker}, this argument shows in particular the following useful upper bound. The precise notion of the conductor $C(\pi)$ in the following lemma is not relevant; we can, for instance, take the definition in \cite[\S3.1.8]{MV}. 

\begin{lemma}\label{property-h-vee}
  The quantity $h^{\vee}(\pi,z)$, defined in \eqref{def-h-vee} is rapidly decaying in $\pi$, uniformly in $z$, in the following sense: for any invariant differential operator $D_z$ on $F$ and any $L,N, \eta>0$, we have 
  \begin{equation*}
    D_z h^{\vee}(\pi,z)
    \ll_{L, D_z ,N, \eta, W_1,f_2}C(\pi)^{-L} \min(|z|^{1/2-\vartheta-\eta},|z|^{-N})
  \end{equation*}
  provided that $\pi$ is $\vartheta$-tempered.
\end{lemma}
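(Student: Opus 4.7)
The plan is to identify $h^\vee(\pi,z) = V_\pi(a(z))$ as the evaluation at $a(z)$ of a smooth vector $V_\pi \in \pi$, and then to deduce both the $z$-dependence and the conductor decay from Lemma \ref{bound-whittaker} combined with a self-adjointness argument for the generalized Laplacian $\mathcal{D}$.

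I would first set $\phi(g) := W_1(g)\overline{f_2(g)}$. Since $f_2$ is left $N$-invariant by \eqref{transformation-f} and $W_1$ is left $(N,\psi)$-equivariant, $\phi$ satisfies $\phi(ng) = \psi(n)\phi(g)$. Combining Lemma \ref{bound-whittaker} applied to $W_1$ (with the tempering parameter $\vartheta_1$) with the explicit behavior of $f_2$ in the induced model, together with the compact-support hypothesis on $W_i(a(\cdot))$, shows that $\phi \in \mathcal{V}_\sigma((N,\psi)\backslash G)$ for some $\sigma > 1/2$. The Whittaker--Plancherel formula \eqref{whittaker-plancherel} therefore applies to $\phi$; comparison with \eqref{def-h-vee} yields $h^\vee(\pi,z) = V_\pi(a(z))$, where $V_\pi \in \pi$ is characterized by $\langle V_\pi, W\rangle_\pi = \int_{N\backslash G}\phi\,\overline{W}\,dg$ for every $W \in \pi$. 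Applying Lemma \ref{bound-whittaker} to the smooth vector $V_\pi$ in the $\vartheta$-tempered representation $\pi$ then delivers
\begin{equation*}
|D_z V_\pi(a(z))| \ll \min\bigl(|z|^{1/2-\vartheta-\eta},\,|z|^{-N}\bigr)\cdot S_d(V_\pi)
\end{equation*}
for some Sobolev degree $d$ depending on $D_z, N, \eta$, reducing matters to the bound $S_d(V_\pi) \ll_L C(\pi)^{-L}$.

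For the $C(\pi)^{-L}$ gain, I would exploit that $\mathcal{D}$ is self-adjoint as a differential operator on $(N,\psi)\backslash G$ and that $\|\pi(\mathcal{D})v\|_\pi \geq C(\pi)\|v\|_\pi$ for every smooth $v\in\pi$ --- in the non-archimedean case from the explicit form $\mathcal{D} = \sum q^m e[m]$ and the level of the minimal $K$-type, and in the archimedean case from the analogous Casimir estimate. By self-adjointness, $\pi(\mathcal{D})^M V_\pi$ is precisely the Bessel element $V_\pi^{(M)}$ attached to $\mathcal{D}^M\phi$, so writing $V_\pi = \pi(\mathcal{D})^{-M}V_\pi^{(M)}$ yields
\begin{equation*}
S_d(V_\pi) = \|\pi(\mathcal{D})^{d-M}V_\pi^{(M)}\|_\pi \leq C(\pi)^{d-M}\|V_\pi^{(M)}\|_\pi,
\end{equation*}
and choosing $M = L+d$ completes the proof provided $\|V_\pi^{(M)}\|_\pi$ is bounded uniformly in $\pi$. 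The main obstacle is securing this uniform bound: the $L^2$-Whittaker--Plancherel identity gives it only in averaged form over $d\pi$, so for a pointwise-in-$\pi$ estimate one argues directly from $\|V_\pi^{(M)}\|_\pi^2 = \int_{N\backslash G}\mathcal{D}^M\phi\cdot\overline{V_\pi^{(M)}}\,dg$ by a Cauchy--Schwarz against the pointwise bounds of Lemma \ref{bound-whittaker} applied to $V_\pi^{(M)}$ itself, closing the loop to express $\|V_\pi^{(M)}\|_\pi$ in terms of a weighted norm of $\mathcal{D}^M\phi$ that depends only on $W_1$ and $f_2$.
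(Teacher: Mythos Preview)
Your overall strategy is correct and matches the paper's (terse) argument: identify $h^\vee(\pi,z)$ with the value at $a(z)$ of a smooth vector $V_\pi\in\pi$, apply Lemma~\ref{bound-whittaker} for the $z$-dependence, and use self-adjointness of $\mathcal{D}$ to trade smoothness of $\phi=W_1\overline{f_2}$ for decay in $C(\pi)$.

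However, the final ``closing the loop'' step has a genuine gap. From $\|V_\pi^{(M)}\|^2 = \int_{N\backslash G}(\mathcal{D}^M\phi)\,\overline{V_\pi^{(M)}}$ and the pointwise bound of Lemma~\ref{bound-whittaker}, you obtain
\[
\|V_\pi^{(M)}\|^2 \;\le\; C_M\, S_{d'}(V_\pi^{(M)}) \;=\; C_M\,\|V_\pi^{(M+d')}\|,
\]
since Lemma~\ref{bound-whittaker} controls $V_\pi^{(M)}$ only through a \emph{higher} Sobolev norm $S_{d'}$. This does not close: the recursion $a_M^2\le C_M a_{M+d'}$ permits doubly-exponential growth of $a_M=\|V_\pi^{(M)}\|$ (e.g.\ $a_j=\exp(2^{j/d'})$), so no uniform bound in $\pi$ follows.

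The clean fix---and this is what the paper's phrase ``rapidly decaying as a function of the conductor of $\pi$ and the $K$-type'' encodes---is to work directly over a $K$-isotypic orthonormal basis of $\mathcal{D}$-eigenvectors $W$, with eigenvalues $\lambda_W\gtrsim C(\pi)$. For such $W$ one has
\[
|\langle V_\pi, W\rangle| \;=\; \lambda_W^{-M}\Bigl|\int_{N\backslash G}(\mathcal{D}^M\phi)\,\overline{W}\Bigr| \;\le\; C_M\,\lambda_W^{d'-M},
\]
the last step using Lemma~\ref{bound-whittaker} on the \emph{unit vector} $W$, for which $S_{d'}(W)=\lambda_W^{d'}$. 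Then
\[
S_d(V_\pi)^2 \;=\; \sum_W \lambda_W^{2d}\,|\langle V_\pi,W\rangle|^2 \;\le\; C_M^2 \sum_W \lambda_W^{2(d+d'-M)},
\]
and the trace-class property of $\mathcal{D}^{-N}$ together with $\lambda_W\gtrsim C(\pi)$ makes this sum $\ll C(\pi)^{-L}$ for $M$ sufficiently large. This is the missing ingredient that replaces your bootstrapping attempt.
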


From Lemma \ref{bessel-transform}, we see that the sum
\begin{equation*}
  \sum_{W\in\B(\pi)}W(g)\overline{W(1)}
\end{equation*}
defines a distribution, which we call the \emph{Bessel distribution} attached to $\pi$.

The following proposition, due to Cogdell and Piatetski-Shapiro \cite[Prop 8.3]{CPS}, describes the Mellin transform of the Bessel distribution.  Since their formulation is slightly different from ours, we record a proof to avoid translation issues, without claiming any novelty.

\begin{prop}\label{mellin-bessel}
  Let $0 \leq \vartheta, \vartheta_0 $, with $\vartheta + \vartheta_0 < 1/2$.  Let $\pi$ be $\vartheta$-tempered and $\phi\in \mathcal{V}_{1-\vartheta_0}((N, \psi)\backslash G)$. 
  Then 
  \begin{equation*}
    \begin{split}
      &\sum_{W\in\B(\pi)}W(1)\int_{N\backslash G}\phi(g)\overline{W(g)} dg\\
      &=\int_{\Re(\chi)=\sigma}{\gamma(1/2,{\pi}\otimes \chi)}
        \left(\int_{F}\int_{F^\times}\phi(a(y)wn(x))\chi(y)\psi(n(-x))\frac{d^\times y}{|y|}dx\right) \, d\chi.
    \end{split}
  \end{equation*}
  Here all the integrals and the sum converge absolutely in the indicated order for $\vartheta_0 < \sigma < 1/2 - \vartheta$.
\end{prop}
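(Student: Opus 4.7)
The plan is to transform the LHS into the RHS via the Bruhat decomposition, a unitary change of variable in the Whittaker sum, Whittaker equivariance, and the local functional equation (Lemma \ref{local-functional-equation}). Both sides depend continuously on $\phi\in \mathcal{V}_{1-\vartheta_0}$, so by density it suffices to verify the identity under the simplifying hypothesis that $\phi \in C_c^\infty(NwAN)$ is compactly supported in the big Bruhat cell, where all regularity issues are harmless; the general case then follows.

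Unfold $\int_{N\backslash G}\phi\,\overline{W}\,dg$ in Bruhat coordinates $g=a(y)wn(x)$ with measure $d^\times y/|y|\,dx$. For each fixed $x$, replace every basis vector $W$ by $W' := \pi(wn(x))W$, which preserves the orthonormal basis since $\pi(wn(x))$ is unitary. Using $w^{-1}=w$ in $\PGL_2$, Whittaker equivariance gives $W(1) = W'(n(-x)w) = \psi(-x)\,W'(w)$, while the identity $(wn(x))(wn(x))^{-1}=1$ yields $W(a(y)wn(x)) = W'(a(y))$. The LHS therefore becomes
\[
\int_F \psi(-x)\,dx\,\sum_{W'}W'(w)\int_{F^\times}\tilde\phi_x(y)\,\overline{W'(a(y))}\,d^\times y,
\]
where $\tilde\phi_x(y) := \phi(a(y)wn(x))/|y|$. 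Under the simplifying hypothesis, $\tilde\phi_x$ lies in $C_c^\infty(F^\times)$ and is thus a smooth vector of $\pi$ in its Kirillov realization; the inner sum is consequently the value at $w$ of its Whittaker extension, $W_{\tilde\phi_x}(w)$. Applying Lemma \ref{local-functional-equation} with $x=0$ and Mellin inversion at $y=1$ yields
\[
W_{\tilde\phi_x}(w) = \int_{\Re\chi=\sigma}\gamma(\tfrac12,\pi\otimes\chi)\int_{F^\times}\tilde\phi_x(y)\chi(y)\,d^\times y\,d\chi.
\]
Restoring $\tilde\phi_x$ in terms of $\phi$ and interchanging the outer $dx$ and $d\chi$ integrations produces the RHS.

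The main technical obstacle is tracking convergence and justifying the interchanges throughout the stated range $\vartheta_0<\sigma<1/2-\vartheta$. The Kirillov pairings converge absolutely by combining the Whittaker decay of Lemma \ref{bound-whittaker} with the bound \eqref{sigmaboundBruhat} on $\phi$, using the assumption $\vartheta+\vartheta_0<1/2$. On the contour $\Re\chi=\sigma$ the gamma factor $\gamma(\tfrac12,\pi\otimes\chi)$ is holomorphic and polynomially bounded, while $\int\tilde\phi_x(y)\chi(y)\,d^\times y$ decays rapidly in the analytic conductor of $\chi$ by smoothness of $\tilde\phi_x$, so the double integral in $(\chi,x)$ is absolutely convergent and Fubini applies. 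Finally, passing from $\phi$ supported in the big cell to arbitrary $\phi\in\mathcal{V}_{1-\vartheta_0}$ is handled by density together with continuity of both sides: continuity of the LHS follows from Lemma \ref{bessel-transform} and the rapid decay of $h^\vee$-type quantities estimated in Lemma \ref{property-h-vee}, while continuity of the RHS is provided by the absolute convergence estimates just described, which are controlled by finitely many Sobolev seminorms of $\phi$.
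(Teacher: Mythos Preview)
Your approach is correct and takes a genuinely different route from the paper's. The paper works directly with a general $\phi\in\mathcal{V}_{1-\vartheta_0}$ and approximates the evaluation-at-$1$ functional in the Kirillov model by a sequence of smooth vectors $W_j$ (with $W_j(a(\cdot))$ nonnegative, $L^1$-normalized, supports shrinking to $1$), computes $\int_{N\backslash G}\phi\,\overline{W_j}$ in Bruhat coordinates via Mellin--Plancherel on $F^\times$ and the local functional equation, and finally passes to the limit $j\to\infty$ using a Taylor-type estimate for the $t$-integral. You instead reduce to compactly supported $\phi$, perform an $x$-dependent unitary basis change, and recognize the resulting Kirillov pairing as $W_{\tilde\phi_x}(w)$, which you evaluate via the local functional equation and Mellin inversion at $y=1$. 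Your argument is more direct once the reduction is in hand; the paper's approximation argument avoids any such reduction at the cost of the limit-justification step.

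The one point that needs more care is the density/continuity reduction. The natural seminorms defining $\mathcal{V}_{1-\vartheta_0}$ are weighted sup-norms, and under these $C_c^\infty$ of the big cell is \emph{not} dense: truncating near $y\to 0$ does not converge in $\sup_y|\phi(a(y)k)|\,|y|^{-(1-\vartheta_0)}$. What does work is a dominated-convergence argument: both sides are bounded by $L^1$-type expressions in $\phi$ and finitely many of its derivatives (the paper's convergence computation for the right side; the integration-by-parts argument immediately preceding Lemma~\ref{bessel-transform} for the left), and one can multiply $\phi$ by smooth $N$-invariant cutoffs $\eta_n\nearrow 1$ supported in the big cell so that $\phi\eta_n\to\phi$ pointwise with uniform $\mathcal{V}_{1-\vartheta_0}$-bounds. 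Your citation of Lemma~\ref{property-h-vee} for continuity of the left side is slightly off; the relevant input is the Sobolev argument just before Lemma~\ref{bessel-transform}.
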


\begin{remark}
  The double integral over $x$ and $y$ may be understood as an integral over $N \backslash G$.  More precisely, it is $\int_{N\backslash G} \phi(g) J_{\psi, \chi}(g) $, where $J_{\psi,\chi}$ is the function defined on (an open subset of) $N\backslash G$ by $J_{\psi, \chi}(a(y) w n(x)) = \chi(y)\psi(-x)$.
\end{remark}

\begin{proof}
  The absolute convergence of the $F\times F^\times$-integral on the right hand side follows from \eqref{sigmaboundBruhat} and direct computation: for $\vartheta_0 < \sigma < 1/2$, we have
  \begin{displaymath}
    \begin{split}
      &\int_F \int_{F^{\times}} \min\Big(\Big( \frac{|y|}{1+|x|^2}\Big)^{-10}, \Big(\frac{|y|}{1+|x|^2}\Big)^{1-\vartheta_0}\Big) |y|^{\sigma} \frac{dy}{|y|^2} dx\\
      & = \int_F \Big( \int_{|y| \leq 1 + |x|^2}\frac{ |y|^{\sigma - \vartheta_0 - 1}}{(1 + |x|^2)^{1-\vartheta_0}} dy +   \int_{|y| >1 + |x|^2}\frac{ |y|^{\sigma - 12}}{(1 + |x|^2)^{10}} dy\Big) dx \ll  \int_F (1 + |x|^2)^{\sigma - 1} dx < \infty. 
    \end{split}
  \end{displaymath}

  Moreover, by integrating by parts in $y$ in the archimedean case and applying smoothness of $\phi$ in the non-archimedean case, we see that the $\chi$-integrand is rapidly decaying. Thus the $\chi$-integral is absolutely convergent, as the gamma factor is polynomially bounded in $\chi$ for $\Re(\chi) < 1/2 - \vartheta$ (i.e., away from poles). The left hand side converges absolutely by the argument in the previous subsection.

  We define $\phi_1\in C^\infty(G)$ by requiring that
  \begin{equation*}
    \int_{F}\phi_1(n(x)g)\psi(-x) dx =\phi(g).
  \end{equation*} 
  Let $\bar{\pi}$ denote the conjugate representation of $\pi$.  Since $\pi$ is unitary, its conjugate $\bar{\pi}$ is isomorphic to the contragradient of $\pi$.
  We denote by $\pi^\infty \leq \pi$ and $\bar{\pi}^\infty \leq \bar{\pi}$ the subspaces of smooth vectors and by $\bar{\pi}^{-\infty}$ the space of distributional vectors, i.e., continuous functionals $\pi^\infty \rightarrow \mathbb{C}$.  In the archimedean case, the spaces of smooth vectors are topologized as in, e.g., \cite[\S3.2]{NV}, while in the non-archimedean case, it is given the inductive limit topology with respect to fixed subspaces of compact open subgroups.  Let $\delta \in \bar{\pi}^{-\infty}$ denote the distributional vector such that $\delta \otimes W \mapsto W(1)$ under the canonical linear map $\bar{\pi}^{- \infty} \otimes \pi^{\infty} \rightarrow \mathbb{C}$.
  Then $\bar{\pi}(\varphi_1)\delta\in\bar{\pi}^\infty$. Spectrally expanding $\pi(\varphi_1)\delta$ in $\bar{\pi}^\infty$, we obtain \cite[Section 7]{JN}
  \begin{equation*}
    \bar{\pi}(\varphi_1)\delta(1) =\sum_{W\in\B(\pi)}{W(1)}\int_{N\backslash G}\varphi(g)\overline{W(g)}dg.
  \end{equation*}
  (To see this formally, start with the distributional expansion $\delta(g) = \sum_{W \in \mathcal{B}(\pi)} \overline{W(g)} W(1)$, then apply $\bar{\pi}(\varphi_1)$ and evaluate at $1$.)  We choose a sequence of vectors $W_j\in \pi$ such that the functions $W_j(a(.))$ are nonnegative, $L^1(F^\times)$-normalized, and have supports tending to the identity element as $j\to \infty$.  Then $\overline{W_j}\to\delta$ in $\bar{\pi}^{-\infty}$; indeed, for any $V \in \bar{\pi}^\infty$, the restriction $V(a(.))$ is a smooth function on $F^\times$, hence in particular continuous in a neighborhood of the identity element, from which it follows that
  \begin{equation*}
    \int_{F^\times}\overline{W_j(a(y))}V(a(y)) \, d^\times y\xrightarrow{j\to\infty} V(1).
  \end{equation*}
  Thus we obtain
  \begin{equation*}
    \bar{\pi}(\varphi_1)\delta(1) = \lim_{j\to\infty} \bar{\pi}(\varphi_1)\overline{W_j}(1)
  \end{equation*}
  which follows from the continuity of the map
  \begin{equation*}
    C_c^\infty(G)\times \bar{\pi}^{-\infty}\to \bar{\pi}^\infty,\quad (f,\ell)\mapsto\bar{\pi}(f)\ell,
  \end{equation*}
  see, e.g., \cite[Lemma 3.6]{NV} for the archimedean case, while in the non-archimedean case, the asserted continuity follows readily from the admissibility of $\pi$.

  Combining, we can write
  \begin{equation*}
    \lim_{j\to\infty}\int_{N\backslash G}\phi(g)\overline{W_j(g)}=\sum_{W\in\B(\pi)}{W(1)}\int_{N\backslash G}\varphi(g)\overline{W(g)}dg.
  \end{equation*}
  
  Now we compute the left hand side in a different way. We integrate using Bruhat coordinates and write
  \begin{equation*}
    \int_{N\backslash G}\phi(g)\overline{W_j(g)}=\int_{F}\int_{F^\times}\phi(a(y)wn(x))\overline{W_j(a(y)wn(x))}\frac{d^\times y}{|y|}dx.
  \end{equation*}
  We apply Plancherel on the $F^\times$-integral and then apply the $\GL(2)\times\GL(1)$ local functional equation (Lemma \ref{local-functional-equation}) to write the above as
  \begin{equation*}
    \int_F\int_{\Re(\chi)=\sigma}\left(\int_{F^\times}\phi(a(y)wn(x))\chi(y)\frac{d^\times y}{|y|}\right){\gamma(1/2,{\pi}\otimes\chi)}\left( \int_{F^\times}\overline{W_j(a(t))}\psi(n(-xt))\chi(t) \, d^\times t \right) \, d\chi \,dx.
  \end{equation*}
  We note that $y$-integral is rapidly decaying in $x$ and (the conductor of) $\chi$, and the double integral in $x$ and $\chi$ is absolutely convergent for 
  $ \Re(\chi)<1/2-\vartheta$ (i.e., away from poles).
  
  As $j \rightarrow \infty$, the $t$-integral converges to $\psi(n(-x))$, and more precisely we have
  \begin{equation*}
    \int_{F^\times}(...) \, d^\times t  - \psi(n(-x)) = (|x| + C(\chi)) o(1)
  \end{equation*}
  as $j \rightarrow \infty$, as follows easily from a Taylor expansion. By the rapid decay of the $y$-integral, we may now interchange the $j$-limit with the $x$- and $\chi$-integrals and conclude the proof.
\end{proof}

\subsection{Proof of Theorem \ref{thm1}}\label{sec26}

\begin{proof}
  [Proof of \eqref{GL1-to-GL2}]

  We recall the relevant notation and use Lemma \ref{bessel-transform} with
  \begin{equation*}
    \phi = W_1\bar{f}_2(. a(z)) \in \mathcal{V}_{1 - \vartheta_1-\vartheta_2}((N, \psi)\backslash G)
  \end{equation*}
  to write $h^\vee(\pi,z)$ as
  \begin{equation*}
    \sum_{W\in\B(\pi)}W(1)\int_{N\backslash G}W_1\overline{f_2(ga(z))}\overline{W(g)}dg.
  \end{equation*}
  For $\vartheta_1 +\vartheta_2 < \sigma<1/2-\vartheta$ (which we assume from now on), we apply Proposition \ref{mellin-bessel} to obtain the absolutely convergent integral formula
  \begin{equation*}
    h^\vee(\pi,z)=\int_{\Re(\chi)=\sigma}{\gamma(1/2,{\pi}\otimes\chi)}\left(\int_F\int_{F^\times}W_1\overline{f_2(a(y)wn(x)a(z))}\chi(y)\psi(n(-x))\frac{d^\times y}{|y|}dx\right) \, d\chi.
  \end{equation*}
  Conjugating $wn(x)$ with $a(z)$, and changing variables $x\mapsto xz$ and $y\mapsto yz$, we can write the inner $F\times F^\times$ integrals as
  \begin{equation*}
    \int_F\int_{F^\times}W_1\overline{f_2(a(y)wn(x))}\chi(yz)\psi(n(-xz))\frac{d^\times y}{|y|}dx.
  \end{equation*}
  Applying \eqref{transformation-f} for $f_2$, we can rewrite the above as
  \begin{equation*}
    \chi(z)\int_F\overline{f_2(w n(x))}\psi(n(-xz))\int_{F^\times}W_1(a(y)wn(x))\bar{\chi}_2\chi(y)\frac{d^\times y}{|y|^{1/2}}dx.
  \end{equation*}
  We apply Lemma \ref{local-functional-equation} to write the inner integral as an absolutely convergent integral
  \begin{equation*}
    \gamma(1,\pi_1\otimes\bar{\chi}_2^{-1}\otimes\chi^{-1})\int_{F^\times}W_1(a(y))\psi(n(xy))(\bar{\chi}_2\chi)^{-1}(y)|y|^{1/2}d^\times y.
  \end{equation*}
  As $f_2(w(n(.))$ is Schwartz by Lemma \ref{principal-series-expansion} and the $y$-integral is absolutely convergent (for $\sigma < 1 - \vartheta_1- \vartheta_2$), we can exchange the $F^\times$ and $F$-integrals and calculate the absolutely convergent integral
  \begin{equation*}
    \int_F \overline{f_2(wn(x))}\psi(n(x(y-z))) \, dx = \overline{W_2(a(y-z))}\overline{\chi_2(y-z)}|z-y|^{-1/2},
  \end{equation*}
  which follows from \eqref{defn-intertwiner-explicit}. Writing $y=zt$ and simplifying, we conclude the proof.
\end{proof}

\begin{proof}
  [Proof of \eqref{GL2-to-GL1}]
  Let $z\in F^\times$ and $x\in F$. We use \eqref{whittaker-plancherel} to write
  \begin{equation*}
    W_1\bar{f}_2(a(z)wn(x))=\int_{\widehat{G}_{\mathrm{gen}}^{\mathrm{u}}}\sum_{W\in\B(\pi)}W(a(z)wn(x))\int_{N\backslash G}W_1(h)\overline{f_2(h)W(h)} dh\, d\pi .
  \end{equation*}
  As described before Lemma \ref{property-h-vee}, the $\pi$-integral and $W$-sum are jointly absolutely convergent.  We use Lemma \ref{local-functional-equation} and Mellin inversion to replace $W(a(z)wn(x))$ above by
  \begin{equation*}
    \int_{\Re(\chi)=0}\gamma(1/2,\pi\otimes\chi)\int_{F^\times}W(a(t))\psi(n(xt))\chi(tz) \, d^\times t \, d\chi.
  \end{equation*}
  We write the resulting expression as
  \begin{equation*}
    \int_{\widehat{G}_{\mathrm{gen}}^{\mathrm{u}}}\sum_{W\in\B(\pi)} \int_{\Re(\chi)=0}\gamma(1/2,\pi\otimes\chi) I_x(W, \chi)  I(W) d\chi\, d\pi ,
  \end{equation*}
  where $I_x(W, \chi) $ is the $t$-integral in the previous display and $ I(W)$ is the $h$-integral in the penultimate display.  Note that $I_x(W, \chi)$ is rapidly decaying as a function of $\chi$ (in the strip specified in the statement of Theorem \ref{thm1}), which can be seen from Lemma \ref{local-functional-equation} or by partial integration in the $t$-integral, and of moderate growth with respect to $W$.  Also, note that $I(W)$ is rapidly decaying in the $K$-type of $W$, as seen in the discussion before Lemma \ref{property-h-vee}. Finally, the $\gamma$-factor is of moderate growth with respect to $\chi$.  It follows that the double sum/integral over $W$ and $\chi$ converges absolutely, and so we may exchange their order.  We now have
  \begin{equation*}
    \sum_{W\in\B(\pi)}I_x(W, \chi)  I(W) = \sum_{W\in\B(\pi)}   I(W)\int_{F^\times}W(a(t))\psi(n(xt))\chi(tz) \, d^\times t, 
  \end{equation*}
  which converges absolutely by Lemma \ref{bound-whittaker} and the decay properties of $I(W)$.  We can now further move the $W$-sum inside the $t$-integral.

  Recalling~\eqref{def-h-vee}, we can thus write
  \begin{equation*}
    W_1\bar{f}_2(a(z)wn(x))=\int_{\widehat{G}_{\mathrm{gen}}^{\mathrm{u}}}\int_{\Re(\chi)=0}\gamma(1/2,\pi\otimes\chi)\int_{F^\times}\psi(n(xt))\chi(tz)h^\vee(\pi,t) \, d^\times t \, d\chi \, d\pi.
  \end{equation*}
  Lemma~\ref{property-h-vee} guarantees the joint absolute convergence of $(\pi,\chi)$-integral, which we now exchange.  In this way, we realize the right hand side as the inverse Mellin transform of
  \begin{equation*}
  \chi \mapsto  \int_{\widehat{G}_{\mathrm{gen}}^{\mathrm{u}}}\gamma(1/2,\pi\otimes\chi)\int_{F^\times}\psi(n(xt))\chi(t)h^\vee(\pi,t) \, d^\times t \, d\pi
  \end{equation*}
  at $z^{-1}$. Writing the left hand side as $W_1(a(z)wn(x))\overline{\chi_2(z)}|z|^{1/2}\overline{f_2(wn(x))}$, we obtain that
  \begin{multline*}
    \overline{f_2(wn(x))}\int_{F^\times}W_1(a(t)wn(x))|t|^{1/2}\overline{\chi_2(t)}\chi^{-1}(t) d^\times t\\
    =\int_{\widehat{G}_{\mathrm{gen}}^{\mathrm{u}}}\gamma(1/2,\pi\otimes\chi)\int_{F^\times}\psi(n(xt))\chi(t)h^\vee(\pi,t) \, d^\times t \, d\pi.
  \end{multline*}
  Note that the left hand side is holomorphic for $\Re(\chi)<1-\vartheta_1-\vartheta_2$, while the right hand side is holomorphic for $-1/2 < \Re(\chi)<1/2 \,\, (<1-\vartheta_1- \vartheta_2)$. We choose $\Re(\chi)>0$ sufficiently small and apply Lemma \ref{local-functional-equation} to the integral on the left hand side above, which gives
  \begin{multline*}
    \overline{f_2(wn(x))}\int_{F^\times}W_1(a(t))\psi(n(xt))|t|^{-1/2}\overline{\chi_2}^{-1}(t)\chi(t) d^\times t \\
    = \gamma(1,\pi_1\otimes\overline{\chi_2}\otimes\chi^{-1})\int_{\widehat{G}_{\mathrm{gen}}^{\mathrm{u}}}\gamma(1/2,\pi\otimes\chi)\int_{F^\times}\psi(n(xt))\chi(t)h^\vee(\pi,t) \, d^\times t \, d\pi.
  \end{multline*}
  Applying Lemma \ref{property-h-vee}, we check that the joint $(\pi,t)$-integral on the right hand side above is absolutely convergent, which allows us to exchange the $t$-integral with the $\pi$-integral. Then we write $\int_{F^\times}(\cdots) \, d^\times t$ as $\int_F(\cdots)|t|^{-1}dt$ and realize the right hand side as a Fourier coefficient of the function
  \begin{equation*}
    t \mapsto \chi(t)|t|^{-1}\gamma(1,\pi_1\otimes\overline{\chi_2}\otimes\chi^{-1})\int_{\widehat{G}_{\mathrm{gen}}^{\mathrm{u}}}\gamma(1/2,\pi\otimes\chi)h^\vee(\pi,t) \, d \pi.
  \end{equation*}
  Recalling the choice of $W_2$ and Lemma \ref{principal-series-expansion}, we see that $f_2(wn(.))$ is a Schwartz--Bruhat function. Hence, applying the inverse Fourier transform, we obtain for any $z\in F$ that
  \begin{multline*}
    \int_{F^\times}W_1(a(t))\overline{W_2(a(t-z))\chi_2(t-z)}|z-t|^{-1/2}|t|^{-1/2}\overline{\chi_2}^{-1}(t)\chi(t) d^\times t\\
    =\chi(z)|z|^{-1}\gamma(1,\pi_1\otimes\chi_2\otimes\chi^{-1})\int_{\widehat{G}_{\mathrm{gen}}^{\mathrm{u}}}\gamma(1/2,\pi\otimes\chi)h^\vee(\pi,z) \, d\pi.
  \end{multline*}
  Note that due to the choice of the $W_i$ as compactly supported functions, the left hand side above is entire in $\chi$. This gives analytic continuation of the right hand side, which is originally defined for $\vartheta_1+ \vartheta_2<\Re(\chi)<1/2 $. Moreover, the smoothness of $W_i(a(.))$ implies that the left hand side above is rapidly decaying in $\chi$, which implies the same about the right hand side in the strip $\vartheta_1 + \vartheta_2<\Re(\chi)<1/2 $.  This is a crucial point -- the regularity properties on $\chi$ are easy to see on the left hand side of the equation, but highly non-trivial to retrieve from the right hand side.

  We may now take Mellin inversion of the both sides at any $y\in F^\times$ and obtain
  \begin{multline*}
    W_1(a(y))\overline{W_2(a(y-z))} = |y|^{1/2}|z-y|^{1/2}\overline{\chi_2(y)\chi_2^{-1}(y-z)}|z|^{-1}\\
    \times\int_{\Re(\chi)=\sigma}\chi^{-1}(y)\chi(z)\gamma(1,\pi_1\otimes\overline{\chi_2}\otimes\chi^{-1})\int_{\widehat{G}_{\mathrm{gen}}^{\mathrm{u}}}\gamma(1/2,\pi\otimes\chi)h^\vee(\pi,z) \, d\pi\, d\chi.
  \end{multline*}
  Writing $y=:y_1$ and $y-z=:y_2$ we conclude the proof.
\end{proof}

\subsection{Proof of Theorem \ref{thm2}}\label{sec27}

\begin{proof}
  [Proof of \eqref{fourth-to-cubic}]
  We integrate the expression of $h^\vee$ in Theorem \ref{thm1} \eqref{GL1-to-GL2} against $\chi_0(y)$ over $y\in F^\times$. The support condition for $h$ implies that the resulting integrand is compactly supported in $y$.  As the $\chi$-integral there is rapidly convergent, we can exchange the $y$-integral with the $\chi$-integral. We conclude the proof after substituting $y\mapsto y/t$ and $t\mapsto(1-t)^{-1}$.
\end{proof}

\begin{proof}
  [Proof of \eqref{cubic-to-fourth}]
  We plug $y_1=z$ and $y_2=yz$ in the expression for $h(y_1,y_2)$ in Theorem \ref{thm1} \eqref{GL2-to-GL1} to obtain
  \begin{align*}
    h(z,yz) &= \frac{\overline{\chi_2^{-1}(y)}\sqrt{|y|}}{|1-y|}\int_{\Re(\chi)=\sigma}\chi(1-y)\gamma(1,\pi_1\otimes\overline{\chi_2}\otimes\chi^{-1})\\
            &\times\int_{\widehat{G}^u_\gen}h^\vee(\pi,z(1-y))\gamma(1/2,\pi\otimes\chi)\,d\pi\, d\chi.
  \end{align*}
  We integrate the above against $\chi_0(z)$ over $z\in F^\times$.  Lemma \ref{property-h-vee} and the rapid convergence of the $\chi$-integrand and $\pi$-integrand allows us to exchange the $z$-integral with the $\chi$- and $\pi$-integrals.  The required identity follows after a change of variable $z\mapsto z(1-y)^{-1}$.
\end{proof}

\subsection{Some explicit formulae over $\Bbb{R}$}\label{sec28} For convenience, we spell this out for $F = \Bbb{R}$, $\psi(x) = e(x)$ the standard additive character, and two principal series representations $\pi_1$, $\pi_2$ corresponding to two even Maa{\ss} forms of spectral parameters $r_1, r_2$ respectively.

For a character $\chi = \chi_{\tau, \delta} = |.|^{\tau} \text{sgn}^{\delta}$ of $\Bbb{R}^{\times}$ with $\delta \in \{0, 1\}$, $\tau \in i\Bbb{R}$, set
\begin{equation}\label{gamma}
  G(\pi; \chi ) :=  
  \begin{cases}
    \prod_{\pm} \frac{\Gamma(\frac{1}{2} (1/2\pm ir - \tau + \rho))}{\Gamma(\frac{1}{2}(  1/2\pm ir + \tau + \rho ))}
    , & \pi = \pi_{(r, \eta)}  \text{ principal series of parity $\eta$}\\ \textbf{1}_{\delta = 0} i^k\frac{\Gamma(k/2 - i\tau)}{\Gamma(k/2 + i\tau)} 
    , & \pi = \pi_k \text{ discrete series of weight $k$}
  \end{cases}
\end{equation}
where in the first case, $\rho := 0$ if $\eta = \delta$ and $\rho := 1$ otherwise. 

Then
\begin{equation}\label{expl1}
  \begin{split}
    h^{\vee}(\pi, y) = \frac{1}{2} \sum_{\delta \in \{0, 1\}}  \int_{(\frac{1}{4})} &\int_{\Bbb{R}^{\times}} h(yt, y(t-1)) \text{sgn}(t)^{\delta}|t|^{-\tau} \Big| \frac{1-t}{t}\Big|^{-ir_2 - \frac{1}{2}} \frac{dt}{|t|}  \\
                                                                                    & \times     G(\pi, \chi_{\tau, \delta})
    \pi^{2ir_2}\prod_{\pm} \frac{\Gamma(\frac{1}{2}(\pm ir_1 -ir_2+ \tau + \delta))}{\Gamma(\frac{1}{2}(1\pm ir_1 +ir_2- \tau + \delta))} \frac{d\tau}{2\pi i}
  \end{split}
\end{equation}
and 
\begin{equation}\label{expl2}
  \begin{split}
    h^{\sharp}(\pi, \text{triv}) =  \frac{1}{2} \sum_{\delta \in \{0, 1\}}  \int_{(\frac{1}{4})} &\int_{\Bbb{R}^{\times}}\int_{\Bbb{R}^{\times}} h(z,  yz ) \text{sgn}(1-y)^{\delta}|y-1|^{\tau-1}|y|^{-ir_2 + \frac{1}{2}} \frac{dz}{|z|}  \frac{dy}{|y|} \\
                                                                                                 & \times G(\pi, \chi_{\tau, \delta})
    \pi^{2ir_2}\prod_{\pm} \frac{\Gamma(\frac{1}{2}(\pm ir_1 -ir_2+ \tau + \delta))}{\Gamma(\frac{1}{2}(1\pm ir_1 +ir_2- \tau + \delta))} \frac{d\tau}{2\pi i}. 
  \end{split}
\end{equation}
Analogous identities hold if $\pi_1$ and/or $\pi_2$ have a different parity. For $F= \Bbb{R}$, the inversion formulae in Theorems \ref{thm1} and \ref{thm2} are variations of the classical Mehler--Fock transform (see \cite{Mo5}).

\section{Global results}

In this section, we prove Theorems \ref{thm-shift} and \ref{thm-moment} in \S~\ref{sec36} and \S~\ref{sec37}. This section takes a global point of view except for \S~\ref{sec34} and \S~\ref{sec37a} where we return temporarily to the local set-up of the previous section.

\subsection{Notation and measures}\label{sec:cj8owg7kvw}
Let $F$ be a (global) number field with adele ring $\mathbb{A}$.  
We denote by $\mathfrak{p}$ a (possibly archimedean) place of $F$, by $F_\mathfrak{p}$ the corresponding completion, and, when $\mathfrak{p}$ is finite, by $\mathfrak{o}_\mathfrak{p}$ the maximal order.  Let $\psi$ be a nontrivial unitary character of $\mathbb{A}/F$. Without loss of generality, we may assume that $\psi$ is unramified at places where $F$ is unramified. 
We use the local components $\psi_\mathfrak{p}$ to normalize Haar measures $d x$ on each of the local completions $F_\mathfrak{p}$ of $F$ and define local measures as in \S~\ref{sec22}.

We equip $\mathbb{A}$ with the product of the given Haar measures on $F_\mathfrak{p}$.  The resulting quotient measure on $\mathbb{A} / F$ then has volume one. We equip $\mathbb{A}^\times$ with the Haar measure given on factorizable functions $f(y) = \prod_{\mathfrak{p}} f_\mathfrak{p}(y_\mathfrak{p})$, with $f_\mathfrak{p}$ the characteristic function of $\mathfrak{o}_\mathfrak{p}^\times$ for almost all $\mathfrak{p}$, by
\begin{equation}\label{haar}
  \int_{\mathbb{A}^\times } f =
  \frac{1}{\xi_F^*(1)}
  \prod_{\mathfrak{p} }
  \Big(\zeta_\mathfrak{p}(1) \int_{F_\mathfrak{p}^\times } {f_{\mathfrak{p} }}\Big).
\end{equation}
Here and in the following $\xi_F$ denotes the (completed) Dedekind zeta function of $F$ and an asterisk, here and elsewhere, indicates that the residue is taken.

We define the algebraic group $G = {\rm PGL}_2$ with subgroups  $B, A, N$ as usual. We equip $A(\mathbb{A})$ and $N(\mathbb{A})$ with the Haar measures transferred from $\mathbb{A}^\times$ and $\mathbb{A}$, and $B(\mathbb{A})$ with the left Haar transferred via the multiplication map $A \times N \rightarrow B$. We equip $G(\mathbb{A})$ with the usual Tamagawa measure so that the adelic quotient $[G] =G(F) \backslash G(\mathbb{A})$ has volume $2$. For a factorizable integral function $f$, we have
\begin{equation}\label{tamagawa}
  \int_{G(\mathbb{A})} f(g) dg
  =
  \prod_{\mathfrak{p} }
  \int_{ A(F_\mathfrak{p})}
  \int_{  N(F_\mathfrak{p})} \int_{  N(F_\mathfrak{p})}
  f_\mathfrak{p} (n_1 a w n_2) \, d n_1 \, d n_2 \, \frac{d^{\times} a}{\lvert a \rvert}.
\end{equation}
This identity follows, after unfolding the definition of the Tamagawa measure (see for instance \cite[\S X.3]{CasFr}), from the fact that the differential form on $N w N A$ given in the coordinates $n(x_1) w n(x_2) a(y)$ by $d x_1 \wedge  \frac{d y}{y} \wedge d x_2$ is the restriction of a top-degree invariant differential form on $G$.

We write $[A] := \Bbb{A}^{\times}/F^{\times}$.  We denote by $\widehat{[A]}$ the dual of $[A]$ and $\widehat{[A]}^{\mathrm{u}}$ the unitary dual (i.e.\ Hecke characters $\chi$ with $\Re(\chi) = 0$), equipped with the measure dual to the measure on $[A]$. We write $\widehat{[A]}_S^{\mathrm{u}}$ for those unitary characters that are unramified outside $S$. We write $[G] = G(F) \backslash G(\Bbb{A})$ and $\widehat{[G]}, \widehat{[G]}_{\mathrm{gen}}, \widehat{[G]}_{\mathrm{gen}}^{\mathrm{u}}, \widehat{[G]}_{\mathrm{gen}, S}^{\mathrm{u}}$ 
as in \S~\ref{sec:cnebsqy7eo}.  The spectral decomposition and the measure $d\pi$ on $\widehat{[G]}$ are given in \eqref{spec} below. In particular, $d\pi$ is half the counting measure on the cuspidal spectrum.

\subsection{Global Petersson and Whittaker norms}\label{sec:cj8owh97fg}
Let $\pi \in\widehat{[G]}_{\mathrm{gen}}^{\mathrm{u}} $ 
and let $\varphi \in \pi$ be a factorizable vector.  Its Whittaker function $W$ may then be factored as $\otimes _{\mathfrak{p}} W_\mathfrak{p}$, where for some large enough finite set of places $S$ (containing all archimedean places, and all finite places at which $\psi$ ramifies), we have for $\mathfrak{p} \notin S$ that $W_\mathfrak{p}$ is normalized spherical, i.e., $W_\mathfrak{p}(1) = 1$ and $W_\mathfrak{p}$ is $G(\mathfrak{o}_\mathfrak{p})$-invariant.   
We then have the following formula, relating the Petersson norm of $\varphi$ to the norm of its Whittaker function as defined in the Kirillov model:
\begin{equation}\label{eq:cj7ok440fl}
  \int_{[G]} \lvert \varphi  \rvert^2
  =
  2
  \left.\prod_{\mathfrak{p}}\right.^* \int_{  F_\mathfrak{p}^\times }
  \lvert W_\mathfrak{p}(a(y)) \rvert^2 \, d^\times y.
\end{equation}
Here $\prod_{\mathfrak{p}}^*$ denotes a ``regularized Euler product'' as in \cite[\S4.1.5]{MV}, given  as follows:   
\begin{equation}\label{eq:cj7ok542dg}
   \left.\prod_{\mathfrak{p}}\right.^* \int_{ F_\mathfrak{p}^\times }
   \lvert W_\mathfrak{p}(a(y)) \rvert^2 \, d^\times y
   =
   \frac{
    L^{(S)}(1, \pi, \Ad)
  }{
    \zeta_F^{(S)}(2)
  }
  \prod_{\mathfrak{p} \in S}
  \int_{ F_\mathfrak{p}^\times }
  \lvert W_\mathfrak{p}(a(y)) \rvert^2 \, d^\times y.
\end{equation}
As usual,  a superscripted ${(S)}$ denotes partial $L$-functions, defined by Euler products over places $\mathfrak{p} \notin S$. 
The formula \eqref{eq:cj7ok440fl} is standard and may be derived from, e.g., \cite[Lem.\ 2.2.3]{MV}, cf.\ also \cite[Section 8]{Ne}. 
\begin{remark}
  While \eqref{eq:cj7ok440fl} is a standard consequence of the Rankin--Selberg method, the precise constants depend upon measure normalizations.  For the convenience of the reader, we record a brief heuristic indicating why the above normalization is correct.  We consider the integral, defined initially for $\Re(s) > 0$,
  \begin{equation*}
    I(s) := \int_{  \mathbb{A}^\times } \left\lvert W_\varphi (a(y))  \right\rvert^2 \lvert y \rvert^s
    \, \frac{d y}{\lvert y \rvert}.
  \end{equation*}
  By \eqref{haar}, this factors as
  \begin{equation*}
    I(s) = \xi_F^*(1)^{-1} \prod_{\mathfrak{p} } \zeta_{\mathfrak{p}}(1) I_\mathfrak{p}(s), \quad 
    I_{\mathfrak{p} } (s ) := \int_{  F_\mathfrak{p}^\times } \left\lvert W _\mathfrak{p} (a(y)) \right\rvert^2 \lvert y \rvert^s \, \frac{d y}{\lvert y \rvert}.
  \end{equation*}
  For any finite place $\mathfrak{p}$ at which $\psi_\mathfrak{p}$ and $W_\mathfrak{p}$ are unramified, we can compute (cf.\ \cite[Lem.\ 1.6.1, Prop.\ 3.8.1]{Bump})
  \begin{equation*}
    I_\mathfrak{p}(s) = \frac{L_\mathfrak{p}(1+s, \pi, \Ad)}{\zeta_\mathfrak{p}(2 + 2 s)}
    \frac{\zeta_\mathfrak{p}(1 + s)}{\zeta_\mathfrak{p} (1)}.
  \end{equation*}
  We see in particular that $I(s)$ extends to a meromorphic function of $s$,  the right hand side of \eqref{eq:cj7ok542dg} is independent of the choice of the large enough finite set of places $S$, and we have
\begin{displaymath}
  \begin{split}
   \underset{s =0}{\text{res}} I(s) & = \xi_F^*(1)^{-1}  \Big( \frac{L^{(S)}_\mathfrak{p}(1 , \pi, \Ad)}{\zeta^{(S)}_F(2  )}
  \underset{s =0}{\text{res}}   \zeta^{(S)}_F(1 + s) \Big) \prod_{\mathfrak{p} \in S} \zeta_{\mathfrak{p}}(1)
  \int_{ F_\mathfrak{p}^\times }
  \lvert W_\mathfrak{p}(a(y)) \rvert^2 \, d^\times y\\
  & =      \frac{L^{(S)}_\mathfrak{p}(1 , \pi, \Ad)}{\zeta^{(S)}_F(2  )}
  \prod_{\mathfrak{p} \in S}  
  \int_{  F_\mathfrak{p}^\times }
  \lvert W_\mathfrak{p}(a(y)) \rvert^2 \, d^\times y.
  \end{split}
  \end{displaymath}
   On the other hand, by unfolding, we have
   \begin{equation*}
     I(s) =  \int_{  \mathbb{A}^\times / F^\times} \int_{  \mathbb{A}/F} |\varphi(n(x) a(y))|^2 |y|^s \,
     \frac{d y}{|y|}\, dx.
   \end{equation*}
  The Haar measure on $\mathbb{A}^\times / F^\times$ has been normalized so that its pushforward under the idelic absolute value is the standard Haar measure $\frac{d t}{t}$ on the positive reals.  It follows that
  \begin{equation*}
    \underset{s =0}{\text{res}} I(s) = \lim_{t \rightarrow 0}
    \mathbb{E}_{y : |y| = t} \int_{  \mathbb{A}/F}
    |\varphi(n(x) a(y))|^2 \, d x.
  \end{equation*}
  Since we have normalized so that $\vol(\mathbb{A}/F) = 1$, we deduce from the equidistribution of the horocycle flow that the limit in question is the average value of $|\varphi|^2$, defined using the probability Haar measure on $[G]$.  Since  $\text{vol}([G]) = 2$, we obtain $ \underset{s =0}{\text{res}} I(s) = \frac{1}{2} \int_{[G]} |\phi|^2$, and the claimed formula follows. 
\end{remark}

A similar formula holds in the Eisenstein case.  We quote \cite[\S8.2]{Ne}.  For a character $\chi$ of $[A]$, we denote by $\mathcal{I}(\chi)$   the normalized induced representation of $G(\mathbb{A})$.  The factorizable vectors $f \in \mathcal{I}(\chi)$ may be written $f = \otimes_{\mathfrak{p}} f_\mathfrak{p}$, where $f_\mathfrak{p}$ belongs to the local induced representation $\mathcal{I}(\chi_\mathfrak{p})$ and is normalized spherical (i.e., $f_\mathfrak{p}(1) = 1$ and $f_\mathfrak{p}$ is $G(\mathfrak{o}_\mathfrak{p})$-invariant) for almost all $\mathfrak{p}$.  There is a natural $G(\mathbb{A})$-invariant duality between $\mathcal{I}(\chi)$ and $\mathcal{I}(\chi^{-1})$, given on factorizable vectors $f, f'$ by the following regularized Eulerian integral:
\begin{equation}\label{regEuler}
  (f,f')_{\mathcal{I}(\chi)} := \int_{B(\mathbb{A}) \backslash G(\mathbb{A})} f f' := \frac{\xi_F^*(1)}{\xi_F(2) } \prod_{\mathfrak{p}}\Big( \frac{\zeta_{F_\mathfrak{p}}(2)}{\zeta_{F_\mathfrak{p}}(1)} \int_{B(F_\mathfrak{p} ) \backslash G(F_\mathfrak{p} )} {f}_\mathfrak{p} f_\mathfrak{p}' \Big).
\end{equation}
Up to a null set, we can parametrize $B(\Bbb{A}) \backslash G(\Bbb{A})$ as $w N(\Bbb{A})$.  In the unitary case $\chi = \overline{\chi }^{-1}$, this defines the invariant inner product on $\mathcal{I}(\chi)$.

To unify the notation, for $\pi \in \widehat{[G]}^{\mathrm{u}}_{\mathrm{gen}}$  
we define a pairing $\langle ,  \rangle_{\pi}$ on $\pi$, as follows:
\begin{itemize}
\item If $\pi$ is cuspidal, then 
\begin{equation}\label{cusp-pairing}
   \langle \varphi_1, \varphi_2 \rangle_{\pi} := \frac{1}{2}\int_{[G]} \varphi_1 \overline{\varphi_2}.
   \end{equation}
  We will see in a moment that the inclusion of the factor $1/2$ leads to esthetically pleasing formulae. 
\item If $\pi = \mathcal{I}(\chi)$ is a unitary Eisenstein series and $\varphi_j = \Eis(f_j)$, where as usual
  \begin{equation*}
    \varphi(g) = \text{Eis}(f)(g) = \sum_{\gamma \in B(F) \backslash G(F)} f(\gamma g),
  \end{equation*}
  then we set 
  \begin{equation}\label{eq:cj8owoe87h}
    \langle \varphi_1, \varphi_2 \rangle_{\pi} := (\varphi_1, \overline{\varphi_2} )_{\pi} = \int_{B(\mathbb{A}) \backslash G(\mathbb{A})} f_1 \overline{f_2}.
  \end{equation}
  Note here that, since $\chi$ is unitary, we have $\overline{f_2} \in \mathcal{I}(\bar{\chi}) = \mathcal{I}(\chi^{-1})$, so we are in position to apply~\eqref{regEuler}.
\end{itemize}

For future reference, we note that if $\pi = \mathcal{I}(\chi)$ is Eisenstein, then the completed $L$-functions satisfy
\begin{equation*}
  \Lambda(s, \pi, \Ad)
  =
  \Lambda(s, \chi^2)
  \Lambda(s, \chi^{-2})
  \xi_F(s).
\end{equation*}
For $\chi^2 \neq 1$, we define
\begin{equation}\label{eq:cj7mbffu4s}
  \Lambda^{\ast}(1, \pi, \Ad)
  :=
  \Lambda(1, \chi^2)
  \Lambda(1, \chi^{-2})
  \xi^{\ast}_F(1).
\end{equation}
When $\chi^2 = 1$, we define $1/\Lambda^*(1,\pi,\Ad) := 0$, so that $\chi \mapsto 1 / L^*(1,
\mathcal{I}(\chi),\Ad)$ is continuous.

\subsection{Spectral decomposition}
We briefly recall the spectral decomposition of $L^2 ([G])$.  It involves the one-dimensional representations, the cuspidal automorphic representations and the Eisenstein series.  The latter arise from the normalized induced representations $\mathcal{I}(\chi)$ attached to characters $\chi$ of $[A] = A(F) \backslash A(\mathbb{A})$. The following can be found in  \cite[Section 8.6]{Ne}. 

\begin{lemma}\label{lem31}
 Let $\Psi \in C_c^{\infty}([G])$. Then
  \begin{equation*}
    \begin{split}
      \Psi(g) &=  \sum_{\substack{\omega \in \widehat{[A]} \\\omega^2 = 1}}  \omega(\det g)  \frac{\int_{[G] } \Psi \omega^{-1}(\det)}{\vol ([G])}+ \frac{1}{2}\sum_{\sigma \text{ {\rm  cusp.}}} \sum_{\varphi \in \mathcal{B}(\sigma)} \varphi(g) \frac{\int_{[G] } \Psi \bar{\varphi}}{\langle \varphi, \varphi \rangle_{\sigma}}\\
           & + \frac{1}{2} \int_{\chi   \in \widehat{[A]}^{\rm u}} \sum_{f \in \mathcal{B}(\mathcal{I}(\chi))} \text{{\rm Eis}}(f)(g) \frac{ \int_{[G]} \Psi \text{ {\rm Eis}}(\bar{f})}{ \langle \text{{\rm Eis}}(f),\text{{\rm Eis}}(f)  \rangle_{\mathcal{I}(\chi)} }
    \end{split}
  \end{equation*}
where 
$\sigma$ runs over the cuspidal automorphic representations, 
$\varphi$ (resp.\ $f$) runs over any orthogonal basis consisting of $K$-isotypic vectors for $\sigma$ (resp.\ $\mathcal{I}(\chi)$). 
\end{lemma}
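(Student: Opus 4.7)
The plan is to derive this from the Langlands spectral decomposition of $L^2([G])$ for $G = \PGL_2$, then track normalizations carefully so that the stated constants fall out. First, $L^2([G])$ decomposes orthogonally as $L^2_{\text{res}} \oplus L^2_{\text{cusp}} \oplus L^2_{\text{cont}}$, where the residual part consists of the one-dimensional characters $\omega\circ\det$ with $\omega^2 = 1$ (the square is forced because $\det$ on $\PGL_2$ is defined only modulo squares), the cuspidal part is a Hilbert direct sum over cuspidal $\sigma$, and the continuous part is a direct integral of Eisenstein-induced representations $\mathcal{I}(\chi)$ indexed by unitary Hecke characters $\chi$.

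Next I would apply the pointwise spectral expansion formula for $\Psi \in C_c^\infty([G])$ in each piece using a $K$-isotypic orthogonal basis. For the residual part, the function $\omega(\det)$ has $L^2$-norm squared equal to $\vol([G]) = 2$, so its contribution is $\omega(\det g)\int_{[G]}\Psi\,\omega^{-1}(\det)/\vol([G])$, matching the first sum. For the cuspidal part, the standard Parseval expansion produces $\sum_\varphi \varphi(g)\int\Psi\bar\varphi/\int|\varphi|^2$; applying the identity $\langle\varphi,\varphi\rangle_\sigma = \tfrac{1}{2}\int|\varphi|^2$ from~\eqref{cusp-pairing} then yields the displayed overall factor of $\tfrac{1}{2}$. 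For the Eisenstein part, the Langlands Plancherel formula naturally lives over a fundamental domain for the involution $\chi \leftrightarrow \chi^{-1}$ on $\widehat{[A]}^{\text{u}}$; integrating over the full character group doubles the total mass, which is precisely what produces the outer $\tfrac{1}{2}$ in the statement.

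The main obstacle is the bookkeeping in the Eisenstein case: one must verify that the denominator $\langle\text{Eis}(f),\text{Eis}(f)\rangle_{\mathcal{I}(\chi)}$, defined via the regularized Eulerian pairing~\eqref{eq:cj8owoe87h}, coincides with the reciprocal of the Langlands Plancherel density on the continuous spectrum at the parameter $\chi$. This identification rests on the Maass--Selberg relations applied to truncated Eisenstein series, together with the explicit normalizations of Tamagawa and local Haar measures fixed in \S\ref{sec:cj8owg7kvw}. Rather than redoing this derivation here, I would reduce to and quote the computation already carried out in~\cite[\S8.6]{Ne}, which uses exactly the conventions of the present paper.
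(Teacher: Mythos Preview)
Your proposal is correct and matches the paper's approach: the paper does not give an independent proof but simply cites \cite[\S8.6]{Ne}, and you do the same after sketching the structure of the argument and explaining where the factors of $\tfrac{1}{2}$ come from. Your additional exposition (the origin of the $\tfrac{1}{2}$ in the cuspidal case from the convention~\eqref{cusp-pairing}, and in the Eisenstein case from unfolding the $\chi \leftrightarrow \chi^{-1}$ symmetry) is accurate and more informative than the paper's bare citation.
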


\begin{remark}
  \begin{enumerate}[a)]
  \item The factor $1/2$ in front of the cuspidal term compensates the factor $1/2$ in the definition of the pairing \eqref{cusp-pairing}. 
  \item To simplify the notation, we write the above decomposition as
    \begin{equation}\label{spec}
      \begin{split}
        \Psi(g) &=
                  \int_{\widehat{[G]}^{\mathrm{u}}}
                  \sum_{\varphi \in \mathcal{B}(\sigma)}
                  \varphi(g)
                  \frac{
                  \int_{[G] } \Psi \bar{\varphi}
                  }{
                  \langle \varphi, \varphi \rangle_{\sigma}
                  } \, d \sigma   \\
                &=
                  \sum_{\omega^2 = 1}  \omega(\det g)  \frac{\int_{[G] } \Psi \omega^{-1}(\det)}{\vol ([G])} + \int_{\widehat{[G]}^{\mathrm{u}}_{\mathrm{gen}}} \sum_{\varphi \in \mathcal{B}(\sigma)} \varphi(g) \frac{\int_{[G] } \Psi \bar{\varphi}}{\langle \varphi, \varphi \rangle_{\sigma}} \, d \sigma.
      \end{split}
    \end{equation}
    Here $d \sigma$ is given on, e.g., the cuspidal spectrum, by \emph{half} the counting measure.
  \item If necessary, the Eisenstein term can be extended holomorphically to all characters $\chi$ of $[A]$.  To do so, we replace this term with
    \begin{equation*}
      \sum_{f \in \mathcal{B}(\mathcal{I}(\chi))} \text{{\rm Eis}}(f)(g) \frac{ \int_{[G]} \Psi \text{ {\rm Eis}}(\tilde{f})}{ ( \text{{\rm Eis}}(f),\text{{\rm Eis}}(\tilde{f})  )_{\mathcal{I}(\chi)} } 
    \end{equation*}
    \begin{itemize}
    \item $f$ runs over a basis of $K$-isotypic vectors for $\mathcal{I}(\chi)$, say $f_1, f_2, \dotsc$, and
    \item $\tilde{f}$ simultaneously runs over the corresponding element of a dual (up to normalization) basis for $\mathcal{I}(\tilde{\chi})$, where $\tilde{\chi} := \chi^{-1}$, say $\tilde{f}_1, \tilde{f}_2, \dotsc$.
    \end{itemize}
    In the special case that $\chi$ is unitary, we can simply take $f$ to traverse an orthogonal basis and $\tilde{f} := \bar{f}$.
  \end{enumerate}
\end{remark}

\subsection{Local intertwining operators}\label{sec34}
We recall some identities from \cite[\S3]{BJN} and pass for a moment to the local setting, recalling the conventions in \S~\ref{sec21} and \ref{sec22}. Let $F$ be a local field and $G = {\rm PGL}_2(F)$.  Let $\psi$ be a nontrivial unitary character of $F$, which normalizes the Haar measure $d x$ on $F$. Let $\chi$ be a character of $F^\times$. Recall that $\mathcal{I}(\chi)$ denotes the associated principal series representation of $G$. We write $M(\chi)$ for the standard intertwining  operator from $\mathcal{I}(\chi)$ to $\mathcal{I}(\chi^{-1})$, given by meromorphic continuation along flat sections of
\begin{equation*}
  M(\chi) f(g) = \int_{ F} f (w n(x) g) \, d x.
\end{equation*}
We define the normalized variant
\begin{equation*}
  M^*(\chi) := \gamma (0, \psi, \chi^2 ) M (\chi),
\end{equation*}
with $\gamma$ the Tate local $\gamma$-factor, cf.\ \cite[(2.2)]{BJN}. The significance of this normalization is that it preserves Jacquet integrals, i.e., for $f \in \mathcal{I}(\chi)$ we have 
\begin{equation}\label{eq:cj7piyddx4}
  W_{M^*(\chi) f} = W_f;
\end{equation}
cf.\ \cite[Prop 4.5.9]{Bump}.

Set $\pi_2 := \mathcal{I}(\chi)$, and let $\pi_1, \pi_3 \in \widehat{G}_{\mathrm{gen}}$. 
For $W_1 \in \mathcal{W} (\pi_1, \psi )$, $W_3 \in \mathcal{W} (\pi_3, \psi)$ and $f_2 \in \pi_2 = \mathcal{I}(\chi)$, we define the local Rankin--Selberg integral
\begin{equation*}
  \Psi (W_1, f_2, W_3 ) :=
  \int_{N \backslash G}
  W_1 f_2 \overline{W_3}.
\end{equation*}
The following lemma records how these behave with respect to the normalized intertwining operators. 
\begin{lemma}\label{lemma:cngq4van7c}
  With the notation and assumptions as above,   we have
  \begin{equation*}
    \Psi (W_1, f_2, {W}_3 ) \gamma (\tfrac{1}{2}, \pi_1 \otimes \pi_3 \otimes \chi )
    = \Psi (W_1, M^\ast (\chi) f_2, {W}_3 ).
\end{equation*}
\end{lemma}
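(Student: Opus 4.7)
The plan is to view both sides of the identity as $G$-invariant trilinear forms on the space $\mathcal{W}(\pi_1,\psi)\otimes\mathcal{I}(\chi)\otimes\overline{\mathcal{W}(\pi_3,\psi)}$, appeal to the generic uniqueness of such forms to conclude proportionality, and then identify the proportionality constant as the triple gamma factor via local Rankin--Selberg theory. The verification of $G$-invariance for each side amounts to a change of variables in the integral over $N\backslash G$, combined with the fact that $M^\ast(\chi)$ is a $G$-equivariant intertwiner $\mathcal{I}(\chi)\to\mathcal{I}(\chi^{-1})$, so that the right-hand side transports correctly. Generic multiplicity-one for such trilinear forms (a consequence of Prasad's theorem in the split $\mathrm{GL}(2)$ setting) then forces
\[
  \Psi(W_1, M^\ast(\chi)f_2, W_3)=C(\chi)\,\Psi(W_1, f_2, W_3)
\]
for some meromorphic scalar $C(\chi)$.

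To identify $C(\chi)=\gamma(\tfrac{1}{2},\pi_1\otimes\pi_3\otimes\chi)$, I would invoke the well-developed local Rankin--Selberg theory for the triple $(\pi_1,\mathcal{I}(\chi),\pi_3)$. The zeta integral $\Psi(W_1,f_2,W_3)$ represents the local triple $L$-factor $L(s,\pi_1\otimes\pi_3\otimes\chi)$ at $s=\tfrac{1}{2}$ (after absorbing the $s$-variable into $\chi$ via a twist), and its standard functional equation under the intertwiner $M^\ast(\chi)$---which by \eqref{eq:cj7piyddx4} preserves the Jacquet integral---produces exactly the triple gamma factor. An alternative, more explicit route is to unfold $\Psi$ in Bruhat coordinates $g=a(y)wn(x)$, substitute the Whittaker expansion $f_2(wn(x))=\int_{F^\times}W_{f_2}(a(t))\chi(t)|t|^{-1/2}\psi(xt)\,dt$ provided by Lemma \ref{principal-series-expansion}, and use that $W_{M^\ast(\chi)f_2}=W_{f_2}$ so that both sides are controlled by the same Whittaker function; the asymmetry $\chi\leftrightarrow\chi^{-1}$ then manifests on a Mellin contour and can be resolved by the $\mathrm{GL}(2)\times\mathrm{GL}(1)$ local functional equation of Lemma \ref{local-functional-equation}.

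The main obstacle will be pinning down the constant precisely: uniqueness yields proportionality essentially for free, but matching $C(\chi)$ to \emph{exactly} $\gamma(\tfrac{1}{2},\pi_1\otimes\pi_3\otimes\chi)$ requires carefully tracking the normalizations---in particular, the factor $\gamma(0,\psi,\chi^2)$ built into $M^\ast(\chi)$, the self-dual Haar measures, and the convention for the Jacquet integral---either through a test-vector calculation at an unramified new vector (where everything reduces to a Satake computation) or via an appeal to the established Rankin--Selberg literature for $\mathrm{GL}(2)\times\mathrm{GL}(2)\times\mathrm{GL}(1)$. The cleanest bookkeeping is likely to proceed by first checking the identity in the unramified spherical case and then using holomorphic extension in $\chi$ together with $G$-equivariance to propagate it to all data.
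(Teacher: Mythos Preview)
Your proposal is correct in spirit and outlines the standard route to this identity. The paper itself does not give an argument: it simply cites \cite[Lemma~5]{BJN}, noting that the proof there (stated over $\mathbb{R}$) carries over verbatim to any local field and that $\overline{\mathcal{W}(\pi_3,\psi)}=\mathcal{W}(\pi_3,\psi^{-1})$.

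A brief comparison: the identity in question is precisely the local functional equation for the $\GL(2)\times\GL(2)$ Rankin--Selberg integral (with the twist by $\chi$ absorbed into the section $f_2$), and the argument in the cited reference proceeds along the lines of your second route---unfolding and reducing to the $\GL(2)\times\GL(1)$ functional equation---rather than invoking abstract multiplicity-one for trilinear forms. Your appeal to Prasad's theorem is valid but heavier than needed: when one factor is principal series, Frobenius reciprocity already gives the one-dimensionality of the trilinear form space, and the Rankin--Selberg machinery computes the proportionality constant directly without a separate test-vector computation. Your caution about normalizations (the factor $\gamma(0,\psi,\chi^2)$ in $M^\ast$, self-dual measures, and the Jacquet integral convention) is well placed; this is exactly where one must be careful, and it is handled in \cite{BJN} by tracking through the explicit formulae rather than by an unramified check plus analytic continuation.
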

\begin{proof}
  This is \cite[Lemma 5]{BJN}.  Strictly speaking, the cited lemma treats the case $F = \mathbb{R}$, but the proof applies verbatim over any local field.  We note also that the cited reference works with $\mathcal{W} (\pi_3, \psi^{-1} )$, while we work here with $\overline{\mathcal{W}(\pi_3, \psi)}$, but the two spaces are identical.
\end{proof}

We now assume that $\pi_2$ is unitarizable, so that $\chi$ is equal to either $\bar{\chi}$ (the ``non-tempered axis'') or $\bar{\chi}^{-1}$ (the ``tempered axis'').  We may then normalize an invariant inner product on $\pi_2$ as follows: for $f_1, f_2 \in \pi_2$ let
\begin{equation*}
  \left\langle f_1, f_2 \right\rangle :=
  \begin{cases}
\int_{F} f_1 (n ' (x)) \overline{f_2 (n ' (x))} \, d x &  \text{ if } \chi = \bar{\chi}^{-1}, \\
\int_{ F} f_1 (n ' (x)) \overline{M^*(\chi) f_2 (n ' (x))} \, d x &  \text{ if } \chi = \bar{\chi},
\end{cases}
\end{equation*}
where as before $n'(x) = \left(
\begin{smallmatrix}
 1 & \\ x & 1 
\end{smallmatrix}
\right)$. 
\begin{remark}
  \begin{enumerate}[a)]
  \item As a check, we note that the above formula ``varies continuously'' with $\chi$ at the intersection of the tempered and non-tempered axes (i.e., $\chi^2 = 1$).  Indeed, recall from \eqref{eq:cj7piyddx4} that the normalized intertwining operator preserves Jacquet integrals, i.e.,
    \begin{equation*}
      \int_{F}
      f \left( w n(x) \right) \psi^{-1}(x) \, d x
      =
      \int_{F}
      M^*(\chi) f \left( w n(x) \right) \psi^{-1}(x) \, d x.
    \end{equation*}
    When $\chi^2 = 1$, the induced representation $\pi_2$ is irreducible, so $M^*(\chi)$ acts on it by some scalar $c$.  Since the Jacquet integral is not identically zero, it follows that $c = 1$.
  \item The definition is also consistent with the familiar formula
    \begin{equation}\label{isom}
      \langle f, f \rangle =  \int_{ F^\times }
      \left\lvert W_f(a(y)) \right\rvert^2
      \, \frac{d y}{ \lvert y \rvert},
    \end{equation}
    i.e., the fact that the induced model and the Whittaker model are isometric. Indeed,
    \begin{equation*}
      W_f(a(y)) = \int_{ F} f( w n(x)a(y)) \psi^{-1}(x) \, d x
    \end{equation*}
    rearranges to
    \begin{equation*}
      W_f(a(y)) = \lvert y \rvert^{1/2} \chi^{-1}(y) \int_{ F} f( w n(x)) \psi^{-1}( x y) \, d x, 
    \end{equation*}
    and by the same argument,
    \begin{equation*}
      W_{M^*(\chi) f}(a(y)) = \lvert y \rvert^{1/2} \chi(y) \int_{ F} M^*(\chi) f( w n(x)) \psi^{-1}( x y) \, d x.
    \end{equation*}
    By the fact \eqref{eq:cj7piyddx4} that the normalized intertwining operator preserves the Jacquet integral, we conclude \eqref{isom} also in the non-tempered case.
  \end{enumerate}
\end{remark}

With the above normalization, we have the following identity relating integrals of matrix coefficient to products of Rankin--Selberg integrals:
\begin{lemma}\label{lemma:cngq4varpv}
  Let $\pi_1, \pi_2, \pi_3 \in \widehat{G}_{\gen}^{\rm u}$, with inner products $\langle , \rangle$, and with $\pi_2 = \mathcal{I}(\chi)$ belonging to the (unitary) principal series, as above.  Assume each $\pi_j$ is $\vartheta_j$-tempered, where $\vartheta_1 + \vartheta_2 + \vartheta_3 < 1/2$.

  We identify $\pi_1$ with $\mathcal{W}(\pi_1, \psi)$ and $\pi_3$ with $\mathcal{W}(\pi_3, \psi)$, as above, normalized with respect to the inner product \eqref{eq:cnebswcf2w}.

  Let $W_1 \in \mathcal{W}(\pi_1, \psi), f_2 \in \mathcal{I}(\chi), W_3 \in \mathcal{W}(\pi_3, \psi)$.  Write simply $v_1, v_2, v_3$ for the same vectors regarded as elements of the ``abstract'' unitary representations $\pi_1, \pi_2, \pi_3$.  Then, with the notation
  \begin{equation*}
    f_2^{\natural} :=
    \begin{cases}
      f_2 & \text{ if } \pi_2 \text{ is tempered}, \\
      M^\ast (\chi) f_2  &  \text{ otherwise,}
    \end{cases}
  \end{equation*}
  we have
  \begin{equation}\label{eq:cngq4vi3q1}
    \int_{G}
    \prod_{i = 1}^3 \left\langle \pi_i (g) v_i, v_i  \right\rangle
    \, d g
    =
    \Psi (W_1, f_2, {W}_3 )
    \overline{\Psi (W_1, f_2^{\natural}, {W}_3 )}.
  \end{equation}
\end{lemma}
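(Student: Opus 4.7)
The plan is to prove this identity in two logically distinct stages: first establish that both sides agree up to a constant via the uniqueness of invariant trilinear forms, and then pin down that constant through a direct unfolding computation in Bruhat coordinates.

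For the first stage, I would invoke Prasad's uniqueness theorem (with Loke supplying the archimedean case): for three irreducible generic representations $\pi_i$ of $G = \mathrm{PGL}_2(F)$, the space $\mathrm{Hom}_G(\pi_1 \otimes \pi_2 \otimes \pi_3, \mathbb{C})$ is at most one-dimensional. The left-hand side of \eqref{eq:cngq4vi3q1}, viewed as a function of $(v_1, v_2, v_3)$, is diagonally $G$-invariant because the substitution $g \mapsto h^{-1} g h$ preserves the Haar measure $dg$. The right-hand side is diagonally $G$-invariant because the Rankin--Selberg integral $\Psi(W_1, f_2, W_3)$ is a $G$-invariant trilinear form on $\pi_1 \otimes \pi_2 \otimes \pi_3^\vee$, and because the passage $f_2 \mapsto f_2^{\natural}$ is either the identity or the $G$-intertwiner $M^\ast(\chi)$. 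Both sides therefore define sesquilinear $G$-invariant forms on $\pi_1 \otimes \pi_2 \otimes \pi_3$, and uniqueness forces $\mathrm{LHS} = c(\pi_1, \pi_2, \pi_3) \cdot \mathrm{RHS}$.

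For the second stage, realize each matrix coefficient in its natural model. Using the Kirillov inner product for $\pi_1$ and $\pi_3$ and the induced inner product for $\pi_2$ (which, by the first remark preceding the lemma, reads uniformly $\langle \pi_2(g) f_2, f_2 \rangle = \int_F f_2(n'(x) g) \overline{f_2^{\natural}(n'(x))}\,dx$ across the tempered and complementary-series cases), the left-hand side becomes
\[
\iiint \overline{W_1(a(y_1))}\, \overline{f_2^{\natural}(n'(x))}\, \overline{W_3(a(y_3))} \cdot I(y_1, x, y_3)\, d^\times y_1\, dx\, d^\times y_3,
\]
with $I(y_1, x, y_3) := \int_G W_1(a(y_1) g)\, f_2(n'(x) g)\, W_3(a(y_3) g)\, dg$, after Fubini (justified by the absolute convergence that follows from $\vartheta_1 + \vartheta_2 + \vartheta_3 < 1/2$ and Harish--Chandra decay of matrix coefficients). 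Parametrize $g$ via the Bruhat coordinates $g = n(x_1) a(y) w n(x_2)$; the Whittaker equivariance $W_i(n(\xi) h) = \psi(\xi) W_i(h)$ converts the $x_1$-integration into an additive-character orthogonality that forces a linear constraint among $y_1$, $y_3$ and $x$. The remaining integrals over the Bruhat parameters split into two independent Rankin--Selberg integrals, producing the right-hand side with $c = 1$.

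The main obstacle is technical rather than conceptual: carrying out the Bruhat bookkeeping cleanly and confirming that the $f_2^{\natural}$ on the right-hand side arises organically from the induced inner product on $\pi_2$ rather than as an ad hoc correction. The latter point is where the preservation of Jacquet integrals by $M^\ast(\chi)$, recalled in \eqref{eq:cj7piyddx4}, and the gamma-factor identity of Lemma \ref{lemma:cngq4van7c} conspire to make the normalization match exactly in both the tempered and complementary-series cases, without spurious local factors.
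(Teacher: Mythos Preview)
The paper does not give a self-contained proof: it simply cites \cite[Lemma~6]{BJN}, noting that the argument there (written over $\mathbb{R}$ and under stronger temperedness hypotheses) carries over verbatim to any local field and to the weaker assumption $\vartheta_1+\vartheta_2+\vartheta_3<1/2$.  Your two-stage plan (uniqueness of trilinear functionals, then an unfolding to pin down the constant) is the standard route and is presumably what \cite{BJN} does, so in that sense your proposal is in line with the paper's intended argument.

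Your first stage is correct.  Both sides of \eqref{eq:cngq4vi3q1} are $G\times G$-invariant sesquilinear forms on $\pi_1\otimes\pi_2\otimes\pi_3$, and the one-dimensionality of $\mathrm{Hom}_G(\pi_1\otimes\pi_2\otimes\pi_3,\mathbb{C})$ forces proportionality.  You are also right that the appearance of $f_2^{\natural}$ is dictated by the inner product on $\mathcal{I}(\chi)$, not introduced by hand.

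Your second stage, however, is too schematic to count as a proof, and the specific mechanism you describe is not quite what happens.  After writing $g=n(u)g'$ with $g'\in N\backslash G$, the left $N$-invariance of $f_2$ and the $(N,\psi)$-equivariance of $W_1,W_3$ produce $\int_F\psi((y_1+y_3)u)\,du$, which imposes $y_1+y_3=0$ (the variable $x$ from the $n'(x)$-parametrization of $B\backslash G$ does \emph{not} enter this constraint, contrary to what you wrote).  One must then (i) justify the Fubini manipulations, since the intermediate integrals involve distributions and the $G$-integral of $W_1(a(y_1)g)W_3(a(y_3)g)f_2(g)$ is not absolutely convergent for generic $(y_1,y_3)$; (ii) track the Jacobian and the $\chi(\,\cdot\,)|\cdot|^{1/2}$-equivariance of $f_2$ under the change of variable that decouples $g'$ from $y_1$; and (iii) handle the sign $a(-1)$ that appears in $W_3(a(-y_1)\,\cdot\,)$, so that the outcome matches $\Psi$ with the convention $\int_{N\backslash G}W_1 f_2\overline{W_3}$ rather than $\int W_1 f_2 W_3$.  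None of these points is a genuine obstruction, but your write-up does not address them, and the assertion that the integrals ``split into two independent Rankin--Selberg integrals, producing the right-hand side with $c=1$'' is precisely the computation that needs to be done.
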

\begin{proof}
  This is \cite[Lemma 6]{BJN}, with two caveats.  The first is that the cited reference states this result for the real case, but the arguments apply over any local field.  The second is that the cited reference imposes stronger $\vartheta$-temperedness conditions on the representations than we do here.  These conditions are used to justify the convergence of the integrals in \eqref{eq:cngq4vi3q1}.  We have recorded here the weaker conditions required in that justification.
\end{proof}

\begin{cor}\label{corollary:cj7mbfn7cs}
  With the same notation and assumptions as in the previous lemma, we have
  \begin{multline*}
    \int_{G}
    \prod_{i = 1}^3 \left\langle \pi_i (g) v_i, v_i  \right\rangle
    \, d g  
    =
    \begin{cases}
     | \Psi (W_1, f_2, {W}_3 ) |^2,  &
                                                                 \text{ $\pi_2$ tempered principal series,} 
      \\
      \gamma (\tfrac{1}{2}, \pi_1 \otimes \pi_3 \otimes \chi )
      | \Psi (W_1, f_2, {W}_3 ) |^2,  &
                                                                 \text{ $\pi_2$ complementary series.} 
    \end{cases}
  \end{multline*}
\end{cor}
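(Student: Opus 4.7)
The plan is to combine the two preceding lemmas: Lemma \ref{lemma:cngq4varpv} expresses the triple matrix coefficient integral as the product of a Rankin--Selberg integral with the conjugate of a ``twisted'' one, and Lemma \ref{lemma:cngq4van7c} identifies that twist, namely the effect of the normalized intertwiner $M^*(\chi)$ on $\Psi$, precisely as multiplication by the gamma factor $\gamma(\tfrac12,\pi_1\otimes\pi_3\otimes\chi)$.

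First I would invoke Lemma \ref{lemma:cngq4varpv} to write
\begin{equation*}
\int_G\prod_{i=1}^3\langle\pi_i(g)v_i,v_i\rangle\,dg=\Psi(W_1,f_2,W_3)\overline{\Psi(W_1,f_2^\natural,W_3)}.
\end{equation*}
In the tempered principal series case $f_2^\natural=f_2$, so the right-hand side collapses immediately to $|\Psi(W_1,f_2,W_3)|^2$, settling the first case. In the complementary series case $f_2^\natural=M^*(\chi)f_2$; applying Lemma \ref{lemma:cngq4van7c} gives
\begin{equation*}
\Psi(W_1,M^*(\chi)f_2,W_3)=\gamma(\tfrac12,\pi_1\otimes\pi_3\otimes\chi)\,\Psi(W_1,f_2,W_3),
\end{equation*}
so the product becomes $\overline{\gamma(\tfrac12,\pi_1\otimes\pi_3\otimes\chi)}\,|\Psi(W_1,f_2,W_3)|^2$.

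The only step requiring genuine thought is matching this with the stated formula, i.e.\ verifying that $\gamma(\tfrac12,\pi_1\otimes\pi_3\otimes\chi)$ is real-valued in the complementary series regime. The clean structural argument is that here $\chi=\bar\chi$ (since complementary series corresponds to real, non-unitary $\chi$) while $\pi_1,\pi_3$ are unitary representations of $\PGL_2$ and hence conjugate self-dual, so $\pi_1\otimes\pi_3\otimes\chi$ is conjugate self-dual and its central-point gamma factor is real. Alternatively, one may observe that the left-hand side is automatically real: the integrand transforms by complex conjugation under $g\mapsto g^{-1}$, and our Haar measure is unimodular; since $|\Psi(W_1,f_2,W_3)|^2$ is visibly real and the data $(W_1,f_2,W_3)$ may be varied so that it is nonzero, the scalar $\overline{\gamma(\tfrac12,\pi_1\otimes\pi_3\otimes\chi)}$ must be real, hence equals $\gamma(\tfrac12,\pi_1\otimes\pi_3\otimes\chi)$. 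Either way, replacing $\overline\gamma$ by $\gamma$ yields the second case and concludes the proof.
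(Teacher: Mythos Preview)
Your proof is correct and follows essentially the same route as the paper: combine Lemma \ref{lemma:cngq4varpv} with Lemma \ref{lemma:cngq4van7c}, then handle the complex conjugate of the gamma factor. The only difference is that the paper cites \cite[Prop.\ 2.1]{PR} to conclude that $\gamma(\tfrac12,\pi_1\otimes\pi_3\otimes\chi)$ is in fact positive (not merely real); your two arguments for realness are valid and suffice for the corollary as stated, though positivity is what is used later when the paper asserts this gamma factor is $\asymp 1$.
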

\begin{proof}
  We see first from Lemma \ref{lemma:cngq4van7c} and Lemma \ref{lemma:cngq4varpv} that the left hand side is
  \begin{equation*}
    \Psi (W_1, f_2, {W}_3 )
    \overline{\Psi (W_1, {f}_2, {W}_3 )\gamma (\tfrac{1}{2}, \pi_1 \otimes \pi_3 \otimes \chi )}.
  \end{equation*}
  The $\gamma$-factor in question is known to be positive (see \cite[Prop.\ 2.1]{PR}), so we obtain the stated formula.
\end{proof}

\begin{remark}
  \begin{enumerate}[a)]
  \item   Once again, if $\chi^2 = 1$, then $\gamma (\tfrac{1}{2}, \pi_1 \otimes \pi_3 \otimes \chi ) = 1$.  Thus the right hand side of the above formula ``varies continuously with $\chi$'' at the intersection of the tempered and non-tempered unitary axes.
  \item If $\pi_1, \pi_2, \pi_3$ are $\vartheta$-tempered for some fixed $\vartheta < 1/6$, then $ \gamma (\tfrac{1}{2}, \pi_1 \otimes \pi_3 \otimes \chi ) \asymp_{\vartheta} 1.$
  \end{enumerate}
\end{remark}

\subsection{Triple product formulae}

We now return to the global setting and let $F$ denote a global field and $S$ a finite set of places containing all archimedean places and all places where $F$ is ramified. 

\begin{lemma}\label{lemma:cj3nf2fkak}
  Let $\pi_1, \pi_2, \pi_3 \in \widehat{[G]}^{\text{{\rm u}}}_{\text{{\rm gen}}, S}$, at least one of which is cuspidal.  Assume that $\pi_2$ belongs to the principal series at each place (in $S$).  There exists a unique $c_{\pi_3}^{(S)} \in \mathbb{C}$ (depending also upon $\pi_1$ and $\pi_2$, but we suppress this dependence for notational brevity) with the following property.  Let $\varphi_j \in \pi_j$ ($j=1,2,3$) be unramified outside $S$, with $\| \varphi_3 \| \not= 0$.  We denote by
  \begin{equation*}
    W_j(g) = \int_{\mathbb{A} / F} \varphi_j(n(x) g) \psi(- x) \, d x,
  \end{equation*}
  the associated $\psi$-Whittaker functions.  We factor $W_2 = W_{2,S} W_2^S$ into its local components inside and outside $S$, with the normalization $W_2^S(1) = 1$.  We choose a character $\chi_{2,S}$ that induces the local component $\pi_{2,S}$, and write $f_2 \in \mathcal{I}(\chi_{2,S})$ for the element of the induced model whose Jacquet integral is the Whittaker function $W_{2,S}$.  Then
\begin{equation}\label{eq:cj4umojgpv}
    \frac{\int_{[G]} \varphi_1 \overline{\varphi_2 \varphi_3}}{ \langle \varphi_3, \varphi_3  \rangle_{\pi_3} }
    =
    c_{\pi_3}^{(S)}
    \frac{    \int_{N(F_S) \backslash G(F_S)}
      W_{1} \overline{W_3 f_{2}}
    }{
      \int_{A(F_S)}
      \lvert W_{3} \rvert^2
    }.
  \end{equation}
\end{lemma}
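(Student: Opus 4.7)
My strategy is to combine the Rankin--Selberg unfolding of the global triple product integral with the Eulerian factorization of the Petersson inner product of $\varphi_3$, reducing the identity to a comparison of local factors at each place. Both the numerator and denominator on the left-hand side of~\eqref{eq:cj4umojgpv} admit factorizations as Euler products over places, and their quotient naturally splits into an ``inside $S$'' piece (matching the right-hand side of~\eqref{eq:cj4umojgpv}) and an ``outside $S$'' piece that depends only on the representations $\pi_1, \pi_2, \pi_3$.

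Concretely, I would proceed as follows. Since at least one of the $\pi_j$ is cuspidal, the triple product integral $\int_{[G]} \varphi_1 \overline{\varphi_2 \varphi_3}$ can be unfolded via the classical Rankin--Selberg technique: inserting the Whittaker expansion of the cuspidal factor, and in the Eisenstein case of $\pi_2$ also unfolding $\varphi_2 = \Eis(f_2)$ against the mirabolic orbit (with an analogous second Whittaker expansion when $\pi_2$ is cuspidal). The result is an adelic integral of the shape
\begin{equation*}
  \int_{[G]} \varphi_1 \overline{\varphi_2 \varphi_3} \, d g = \int_{N(\mathbb{A}) \backslash G(\mathbb{A})} W_{\varphi_1}(g) \overline{W_{\varphi_3}(g) f_2(g)} \, d g,
\end{equation*}
which on pure tensor vectors factors via~\eqref{tamagawa} as a product $\prod_{\mathfrak{p}} \Psi_{\mathfrak{p}}(W_{1,\mathfrak{p}}, f_{2,\mathfrak{p}}, \overline{W}_{3,\mathfrak{p}})$ of local Rankin--Selberg integrals, up to Tamagawa normalization factors. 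At each $\mathfrak{p} \notin S$ the spherical Rankin--Selberg calculation identifies $\Psi_{\mathfrak{p}}$ with a ratio of local $L$-factors, so the outside-$S$ product evaluates to $L^{(S)}(\tfrac{1}{2}, \pi_1 \otimes \pi_2 \otimes \overline{\pi_3})$ times standard correction factors. Simultaneously I would use~\eqref{eq:cj7ok440fl}--\eqref{eq:cj7ok542dg} in the cuspidal case, or~\eqref{eq:cj8owoe87h} together with~\eqref{regEuler} in the Eisenstein case, to factor the Whittaker norm as $\langle \varphi_3, \varphi_3 \rangle_{\pi_3} = (L^{(S)}(1, \pi_3, \Ad)/\zeta_F^{(S)}(2)) \prod_{\mathfrak{p} \in S} \int_{F_{\mathfrak{p}}^{\times}} |W_{3,\mathfrak{p}}|^2 \, d^{\times} y$.

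Taking the ratio, the $\mathfrak{p} \in S$ factors on numerator and denominator together produce precisely the right-hand side of~\eqref{eq:cj4umojgpv}, while the remaining outside-$S$ factors combine into a single constant $c_{\pi_3}^{(S)}$ depending only on the automorphic representations and the set $S$. The independence of $c_{\pi_3}^{(S)}$ from the specific choice of vectors $(\varphi_j)$ unramified outside $S$ is then automatic: any two pure-tensor choices differ only in their local components at $\mathfrak{p} \in S$, and these local dependencies are tracked identically on both sides of~\eqref{eq:cj4umojgpv}.

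The main obstacle will be the bookkeeping of normalization constants across the local-global factorizations: the Tamagawa measure~\eqref{tamagawa} with $\vol([G]) = 2$, the residue $\xi_F^{*}(1)$ from~\eqref{haar}, the factor $1/2$ in the cuspidal pairing~\eqref{cusp-pairing}, the various self-dual local measures, and in the Eisenstein case the regularized intertwining operator $M^{*}(\chi)$ from Lemma~\ref{lemma:cngq4van7c}, all of which must be tracked to confirm that the outside-$S$ factors combine cleanly. A further subtlety arises if $\pi_2$ is Eisenstein with $\chi_2^2 = 1$: the apparent divergence of $\Lambda^{*}(1, \pi_2, \Ad)$ is handled by the convention~\eqref{eq:cj7mbffu4s} setting $1/\Lambda^{*}(1, \pi_2, \Ad) := 0$, and the transition between tempered and non-tempered unitary axes requires Corollary~\ref{corollary:cj7mbfn7cs} to ensure continuity across $\chi_2^2 = 1$.
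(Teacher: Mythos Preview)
Your unfolding step has a genuine gap. The hypothesis is only that $\pi_{2,\mathfrak{p}}$ belongs to the principal series for each $\mathfrak{p}\in S$; this is a \emph{local} condition, and $\pi_2$ may well be globally cuspidal. In that case the displayed identity
\[
  \int_{[G]} \varphi_1 \overline{\varphi_2 \varphi_3}
  = \int_{N(\mathbb{A}) \backslash G(\mathbb{A})} W_{\varphi_1}\,\overline{W_{\varphi_3}\,f_2}
\]
is simply false: there is no global $f_2$ to unfold against, and a triple product of three cusp forms does not admit any Eulerian integral representation of this shape. The phrase ``with an analogous second Whittaker expansion when $\pi_2$ is cuspidal'' glosses over exactly the obstruction---this is precisely why the computation of $|c_{\pi_3}^{(S)}|^2$ in the subsequent Lemma~\ref{lemma:cj7jjxnm7b} must invoke Ichino's formula (for the \emph{square}) in the cuspidal case rather than a direct unfolding.

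The paper's proof sidesteps this entirely by an abstract multiplicity-one argument: both denominators define $G(F_S)$-invariant inner products on the irreducible representation $\pi_{3,S}$, hence are proportional by Schur; both numerators define invariant trilinear functionals on $\pi_{1,S}\otimes\overline{\pi_{2,S}}\otimes\overline{\pi_{3,S}}$, and the space of such functionals is one-dimensional (with the local Rankin--Selberg integral on the right not identically zero). This yields existence and uniqueness of $c_{\pi_3}^{(S)}$ in two sentences, with no bookkeeping of constants required. Your explicit approach is essentially what is carried out in Lemma~\ref{lemma:cj7jjxnm7b} to determine the \emph{magnitude} of the constant, but there the Eisenstein and cuspidal cases must be treated separately, and the latter genuinely requires Ichino.
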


We will compute the magnitude of the constant $c_{\pi}^{(S)}$ in the next lemma.  We emphasize that it depends only upon the representations, and not the choice of vectors $\varphi_j$ (unramified outside $S$).  In particular, it is independent of the normalizations of those vectors.

\begin{proof}
  The denominators both define invariant inner products on the irreducible representation $\pi_{3,S}$ and thus differ by a multiplicative scalar depending only on $\pi_{3,S}$. The numerators both define invariant trilinear functionals on the tensor product $\pi_{1,S} \otimes \overline{\pi_{2,S}} \otimes \overline{\pi_{3,S}}$, with the functional appearing on the right hand side not vanishing identically. The result now follows  from the fact that the space of such functionals is one-dimensional.
\end{proof}

\begin{lemma}\label{lemma:cj7jjxnm7b}
  Let notation and assumption be as in Lemma \ref{lemma:cj3nf2fkak}.
  Assume that $\sigma$ is cuspidal or $\sigma = \sigma(\chi)$ is Eisenstein, induced by a unitary idele class character $\chi$. Then 
  \begin{equation*}
    \lvert c_{\sigma}^{(S)} \rvert^2 =
    \frac{L^{(S)} (\tfrac{1}{2}, \pi_1 \otimes \overline{\pi_2} \otimes \sigma)}{L^{(S),\ast}(1, \sigma, \Ad)^2} G^{(S)}(\pi_1, \pi_2, \sigma)
  \end{equation*}
  where $G^{(S)}$ depends only on the places $\mathfrak{p} \in S$ where $\pi_2$ is non-tempered and is of size $\exp (O (\lvert S \rvert))$.
\end{lemma}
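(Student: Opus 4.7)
The plan is to combine the local identity in Lemma \ref{lemma:cj3nf2fkak} with Ichino's triple product formula (in the cuspidal case) or a direct Rankin--Selberg unfolding (in the Eisenstein case), and then use Corollary \ref{corollary:cj7mbfn7cs} to match the local factors. Taking the squared magnitude of \eqref{eq:cj4umojgpv} and using the Kirillov/Whittaker norm identities \eqref{eq:cj7ok440fl}--\eqref{eq:cj7ok542dg} and \eqref{isom} to identify the denominators, I would begin from the identity
\begin{equation*}
  |c_\sigma^{(S)}|^2 = \frac{|\int_{[G]} \varphi_1 \overline{\varphi_2 \varphi_3}|^2}{\langle \varphi_3,\varphi_3\rangle_\sigma^{\,2}} \cdot \frac{\bigl(\int_{A(F_S)} |W_3|^2\bigr)^{2}}{|\Psi(W_1,f_2,W_3)|^{2}}
\end{equation*}
valid for any factorizable test vectors that are unramified outside $S$ and make the left denominator nonzero; since the right side is independent of the choice of vectors, one is free to plug in whatever makes the computation cleanest.

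In the cuspidal case I would invoke Ichino's formula, which expresses the first numerator as a product of local matrix-coefficient integrals times the central value $L^{\ast}(\tfrac12,\pi_1\otimes\overline{\pi_2}\otimes\sigma)$, divided by $\prod_i L^{\ast}(1,\pi_i,\mathrm{Ad})$ and some $\xi_F$-constant. At places $\mathfrak{p}\notin S$ every vector is the spherical one and the local integral collapses to the standard ratio of local $L$-factors, producing the partial $L$-functions of the statement. At places $\mathfrak{p}\in S$ Corollary \ref{corollary:cj7mbfn7cs} converts each local matrix-coefficient integral into $|\Psi(W_1,f_2,W_3)_\mathfrak{p}|^{2}$, multiplied by $\gamma(\tfrac12,\pi_1\otimes\pi_3\otimes\chi_\mathfrak{p})$ precisely when $\pi_{2,\mathfrak{p}}$ is in the complementary series; these factors multiply to $|\Psi_S|^{2}$ and so cancel against the denominator in the displayed formula. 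What is left is the ratio of partial $L$-functions in the statement, together with a bounded local factor $G^{(S)}$ built from the local zeta constants, the $\gamma$-factors from the complementary-series places, and the $L^{(S),\ast}(1,\pi_i,\mathrm{Ad})$ contributions at $i=1,2$ that need to be reconciled with \eqref{eq:cj7ok542dg}.

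For Eisenstein $\sigma=\mathcal{I}(\chi)$, I would instead unfold the Eisenstein series in $\int_{[G]}\varphi_1\overline{\varphi_2\varphi_3}$ against the cuspidal form $\varphi_1\overline{\varphi_2}$. The resulting global integral is a standard Rankin--Selberg integral representing $L^{\ast}(\tfrac12,\pi_1\otimes\overline{\pi_2}\otimes\chi)L^{\ast}(\tfrac12,\pi_1\otimes\overline{\pi_2}\otimes\chi^{-1})$, which equals $L^{\ast}(\tfrac12,\pi_1\otimes\overline{\pi_2}\otimes\sigma)$, while the role of the adjoint $L$-value is played by $\Lambda^{\ast}(1,\sigma,\mathrm{Ad})$ as defined in \eqref{eq:cj7mbffu4s}; the local bookkeeping then proceeds exactly as in the cuspidal case.

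The main obstacle is tracking measure normalizations: the Tamagawa constant, the residue $\xi_F^{\ast}(1)$, the factor $\zeta_F(2)^{-1}$ coming from our Haar measure on $G$ (see \eqref{eq:cj8owhrlnp}), the $1/2$ in the cuspidal pairing \eqref{cusp-pairing} matching $\vol([G])=2$, and the regularized Euler products \eqref{eq:cj7ok542dg} and \eqref{regEuler}, all have to be lined up so that outside $S$ one obtains exactly $L^{(S)}(\tfrac12,\ldots)/L^{(S),\ast}(1,\sigma,\mathrm{Ad})^{2}$ and inside $S$ one collects a single factor $G^{(S)}$. Once the cancellation of $|\Psi_S|^{2}$ is performed, the bound $G^{(S)}=\exp(O(|S|))$ is automatic: under the $\vartheta$-temperedness hypotheses the local zeta values, adjoint $L$-factors and complementary-series $\gamma$-factors are each $O(1)$, and there are at most $|S|$ of them.
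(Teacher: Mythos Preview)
Your proposal is correct and follows essentially the same route as the paper: Ichino's formula in the cuspidal case, Rankin--Selberg unfolding in the Eisenstein case, and Corollary \ref{corollary:cj7mbfn7cs} to convert each local matrix-coefficient integral into $|\Psi(W_1,f_2,W_3)_\mathfrak{p}|^2$ (times a $\gamma$-factor at complementary-series places of $\pi_2$), with \eqref{eq:cj7ok440fl}--\eqref{eq:cj7ok542dg} and \eqref{regEuler} handling the norm ratios. One sharpening worth noting: the zeta constants and the adjoint $L$-factors for $\pi_1,\pi_2$ must cancel \emph{exactly} in your bookkeeping, so that $G^{(S)}$ ends up being precisely the product of the $\gamma(\tfrac12,\pi_{1,\mathfrak{p}}\otimes\sigma_\mathfrak{p}\otimes\bar\chi_\mathfrak{p})$ over the non-tempered places of $\pi_2$ --- otherwise you would not obtain the stated dependence of $G^{(S)}$ only on those places.
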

Before giving the proof, we record the following:
\begin{remark}
  We have assumed that $\pi_2$ is cuspidal, but one can also make sense of $c_{\pi_3}^{(S)}$ when $\pi_2$ is Eisenstein.  In that case, the local condition on $\pi_2$ is automatically satisfied, and we may use the precise formula for $G^{(S)}$ given below in \eqref{thirdfactor} and the functional equation to deduce that
  \begin{equation*}
    \lvert c_\sigma^{(S)} \rvert^2 =
    \left\lvert
      \frac{L^{(S)}(\tfrac{1}{2}, \pi_1 \otimes \sigma \otimes \overline{\chi_2})}{
        L^{(S), \ast}(1, \sigma, \Ad)
      }
    \right\rvert^2.
  \end{equation*}
  Indeed, the same identity holds without squares, as one may see by direct Rankin--Selberg unfolding. Together with this identity, one can generalize the computations in the appendix of this paper to show that our results are completely consistent with the case of general, possibly non-unitary Eisenstein series treated by Motohashi, which are required if one wants to compute the fourth moment of the Riemann zeta function with shifted arguments; cf.\ \cite[(4.3.1)]{Mo1}.
\end{remark}
\begin{proof}
  (1) Consider first the case that $\sigma = \sigma(\chi)$ is Eisenstein.

  We assume throughout the proof $\chi^2 \not= 1$.  For the degenerate case where $\chi^2 = 1$, one may deduce that $c_{\sigma}^{(S)} = 0$ by continuity, or using that the unnormalized Eisenstein intertwiner vanishes in such cases.

  The proof is then an exercise in unfolding.  Write $\varphi_3 = \Eis(f_3)$.  Then, with the notation as above, and working formally for the moment, we have by \eqref{tamagawa} the standard calculation 
  \begin{equation*}
    \begin{split}
      \int_{[G]} \varphi_1\overline{ \varphi_2  \varphi_3}
      &= \int_{B(F)\backslash G(\Bbb{A})} \varphi_1 \overline{\varphi_2 f_3} =  \int_{N(\Bbb{A})\backslash G(\Bbb{A})}W_1  \overline{W_2 f_3} = \prod_{\mathfrak{p}}  \int_{N(F_{\mathfrak{p}})\backslash G (F_{\mathfrak{p}})}W_1  \overline{W_2 f_3} \, dg. 
    \end{split}
  \end{equation*}
  To compute this for an unramified place $\mathfrak{p}$, we use \eqref{eq:cj8owhrlnp} to write
  \begin{multline*}
    \int_{N(F_{\mathfrak{p}})\backslash G (F_{\mathfrak{p}})}W_1  \overline{W_2 f_3} \, dg =  \frac{1}{\zeta_{\mathfrak{p}}(2)}\int_{N(F_{\mathfrak{p}})\backslash G (F_{\mathfrak{p}})}W_1  \overline{W_2 f_3} \, dg'' 
    =\frac{1}{\zeta_{\mathfrak{p}}(2)} \int_{F^{\times}} \int_K W_1  \overline{W_2 f_3}  (a(y)k) \frac{d^{\natural}y}{|y|} \, dk
  \end{multline*}
  where here $dk$ is the probability Haar measure and $d^{\natural}y/|y|$ assigns volume one to $\mathfrak{o}_{\mathfrak{p}}^\times$. The $K$-integral can be dropped, and we obtain finally 
  \begin{equation}\label{unfolding}
    \begin{split}    
      \int_{[G]} \varphi_1\overline{ \varphi_2  \varphi_3}    & = \frac{L^{(S)}(\tfrac{1}{2}, \pi_1 \otimes \overline{\pi_2} \otimes \chi^{-1})}{\zeta_F^{(S)}(2) L^{(S)}(1, \chi^{-2})}
                                           \int_{N(F_S)\backslash G(F_S)}W_1 \overline{W_2  f_3}.
    \end{split}
  \end{equation}
  To make this calculation rigorous, one deforms $f_3$ into a flat family $f_3(s)$ with $f_3(0) = f_3$ and $f_3(s) \in \mathcal{I}(\chi |.|^s)$, performs the calculation for $\Re(s)$ sufficiently large, and then concludes via analytic continuation; indeed, this is how one establishes the analytic continuation of the $L$-function in question.

  Dividing by the definition \eqref{eq:cj4umojgpv} of $c_{\sigma }^{(S)}$ gives
  \begin{equation*}
    c_{\sigma }^{(S)} = 
    \frac{L^{(S)}(\tfrac{1}{2}, \pi_1 \otimes \overline{\pi_2} \otimes \chi^{-1})}{\zeta_F^{(S)}(2) L^{(S)}(1, \chi^{-2})}
    \frac{\int_{A(F_S)} \lvert W_3  \rvert^2}{\langle \varphi_3, \varphi_3  \rangle_{\sigma } }
    \frac{ \int_{N(F_S)\backslash G(F_S)}W_1\overline{W_2 f_3}}{   \int_{N(F_S) \backslash G(F_S)}
      W_{1} \overline{W_3 f_2} }.
  \end{equation*}

  We pause to compare $W_3$, restricted to $G(F_S)$, with the product $\prod_{\mathfrak{p} \in S} W_{f_\mathfrak{p}}$.  Let us define
    $W_{3,S} : G(F_S) \rightarrow \mathbb{C}$ by 
  \begin{equation*}
    W_{3,S}(g) = \prod_{\mathfrak{p} \in S} W_{f_\mathfrak{p}}(g_\mathfrak{p}).
  \end{equation*}
  For $g \in G(F_S)$, we have
  \begin{equation}\label{eq:cnfnkrem0h}
    W_3(g) = W_{\mathrm{Eis}(f)}(g) = \frac{1}{L^{(S)}(1,\chi^2)} W_{3,S}(g).
  \end{equation}
  This last identity is the Casselman--Shalika formula at places not in $S$.  The above formula then becomes
  \begin{equation*}
    c_{\sigma }^{(S)} = 
    \frac{L^{(S)}(\tfrac{1}{2}, \pi_1 \otimes \overline{\pi_2} \otimes \chi^{-1}) }{\zeta_F^{(S)}(2) \lvert L^{(S)}(1, \chi^{2}) \rvert^2}
    \frac{\int_{A(F_S)} \lvert W_{3,S}  \rvert^2}{\langle \varphi_3, \varphi_3  \rangle_{\sigma } }
    \frac{ \int_{N(F_S)\backslash G(F_S)}W_1\overline{W_2 f_3}}{   \int_{N(F_S) \backslash G(F_S)}
      W_{1} \overline{W_{3,S} f_2} }.
  \end{equation*}

  We now compute the squared magnitudes of each of the three factors on the right hand side.

  We start with the first factor.  Since $\chi$ is unitary and recalling \eqref{eq:cj7mbffu4s}, we have
  \begin{equation}\label{firstfactor}
    \frac{ | L^{(S)}(\tfrac{1}{2}, \pi_1 \otimes \overline{\pi_2} \otimes \chi^{-1}) |^2}{ |\zeta_F^{(S)}(2)  L^{(S)}(1, \chi^{-2})^2 |^2   } =
    L^{(S)} (\tfrac{1}{2}, \pi_1 \otimes \overline{\pi_2} \otimes \sigma)   \frac{\zeta_F^{(S),\ast}(1)^2}{\zeta_F^{(S)}(2)^2  L^{(S),\ast}(1, \sigma, \Ad)^2}.
  \end{equation}

  We now compute the second factor.  Let us write $\varphi_3 = \Eis(f)$, where $f = \otimes f_{\mathfrak{p}}$ is factorizable and, for all $\mathfrak{p} \notin S$, the local factor $f_\mathfrak{p}$ is normalized spherical (i.e., $G(\mathfrak{o}_\mathfrak{p})$-invariant and satisfying $f_\mathfrak{p}(1) = 1$).  By the definitions \eqref{eq:cj8owoe87h} and \eqref{regEuler}, we then have
  \begin{equation*}
    \langle \varphi_3, \varphi_3  \rangle_{\sigma} 
    =  \frac{\zeta_F^{(S), \ast}(1)}{\zeta_F^{(S)}(2)  }\prod_{\mathfrak{p} \in S}   \int_{B(F_{\mathfrak{p}} ) \backslash G(F_{\mathfrak{p}} )}  \lvert f_{\mathfrak{p}} \rvert^2 .
  \end{equation*}
    
  By the isometry property \eqref{isom}, it follows that
  \begin{equation*}
    \prod_{\mathfrak{p} \in S}   \int_{B(F_{\mathfrak{p}} ) \backslash G(F_{\mathfrak{p}} )} \lvert f_{\mathfrak{p}} \rvert^2
    = \prod_{\mathfrak{p} \in S} \int_{A(F_\mathfrak{p})} \left\lvert W_{f_\mathfrak{p}} \right\rvert^2.
  \end{equation*}
  Thus, in total, we obtain
  \begin{equation}\label{secondfactor}
    \frac{\int_{A(F_S)} \lvert W_{3,S} \rvert^2}{\langle \varphi_3, \varphi_3 \rangle_{\sigma } } =
    \frac{ \prod_{\mathfrak{p} \in S} \int _{A(F_\mathfrak{p})} \lvert W_{f_\mathfrak{p}} \rvert^2}{
      \frac{\zeta_F^{(S), \ast}(1)}{\zeta_F^{(S)}(2)  } \prod_{\mathfrak{p} \in S} \int_{B(F_\mathfrak{p}) \backslash G(F_\mathfrak{p})} \lvert f_\mathfrak{p}  \rvert^2
    }=
    \frac{\zeta_F^{(S)}(2) }{\zeta_F^{(S),*}(1)}.
  \end{equation}

  We now compute the squared magnitude of the third factor.  For this, we assume that $W_1, W_2, f_3$ are factorizable and work locally at a place $\mathfrak{p} \in S$, writing simply $W_1, W_2, f_3$ for their components at that place. 
  Since $f_3$ belongs to the tempered principal series at $\mathfrak{p}$, we can apply Corollary \ref{corollary:cj7mbfn7cs} to obtain
  \begin{equation*}
    \Big| { \int_{N(F_{\mathfrak{p}}) \backslash G(F_{\mathfrak{p}})}W_1\overline{W_2 f_3}}\Big|^2
    =
    \int_{  G(F_{\mathfrak{p}})}
    \langle g W_1, W_1 \rangle
    \langle g \overline{W_2}, \overline{W_2} \rangle
    \langle g \overline{f_3}, \overline{f_3} \rangle
    \, d g.
  \end{equation*}
  By~\eqref{isom}, we have  $\langle g \overline{f_2}, \overline{f_2} \rangle = \langle g \overline{W_2}, \overline{W_2} \rangle$. 
  Applying now Corollary \ref{corollary:cj7mbfn7cs} once again, this equals
  \begin{equation*}
    \begin{cases}
\Big| { \int_{N(F_{\mathfrak{p}}) \backslash G(F_{\mathfrak{p}})}W_1}\overline{f_2 W_{f_3}} \Big|^2, & \pi_2 \text{ tempered at $\mathfrak{p}$}, \\  \gamma (\tfrac{1}{2}, \pi_1 \otimes \sigma \otimes \bar{\chi} ) \Big| { \int_{N(F_{\mathfrak{p}}) \backslash G(F_{\mathfrak{p}})}W_1\overline{f_2W_{f_3}} }\Big|^2,  &\pi_2 \text{ non-tempered at $\mathfrak{p}$}, 
\end{cases}
  \end{equation*}
  and we conclude
  \begin{equation}\label{thirdfactor}
    \Big| \frac{ \int_{N(F_S)\backslash G(F_S)}W_1\overline{W_2 f_3}}{   \int_{N(F_S) \backslash G(F_S)}
      W_{1} \overline{W_{3,S} f_2} } \Big|^2 = \prod_{\substack{\mathfrak{p} \in S\\ \pi_{2, \mathfrak{p}} \text{ non-tempered}}}  \gamma (\tfrac{1}{2}, \pi_{1, \mathfrak{p}} \otimes \sigma_{ \mathfrak{p}} \otimes \bar{\chi}_{\mathfrak{p}} ).
  \end{equation}
  Let us denote the right hand side of the above by
  $G^{(S)}(\pi_1, \pi_2, \sigma)$.  Then, combining \eqref{firstfactor}, \eqref{secondfactor} and \eqref{thirdfactor} and recalling that each gamma factor is $\asymp 1$, we obtain the claimed formula. \\

(2) Let us now assume that $\sigma$ cuspidal. Then the statement follows from a careful investigation of Ichino's   triple product formula \cite{Ic}. 

Before we state the formula, we recall that Ichino uses the Tamagawa measure on $G$ which he factorizes into local measures $\prod_{\mathfrak{p}} dg_{\mathfrak{p}}^{\text{Ic}}$, say, 
so that for   all places $\mathfrak{p} \not\in S$, the volume of a maximal compact subgroup is one. From \eqref{eq:cj8owhrlnp}, we have 
\begin{equation*}
\prod_{\mathfrak{p} \notin S} d g_\mathfrak{p}^{\mathrm{Ic}} = \zeta^{(S)}_F(2) \prod_{\mathfrak{p} \notin S} d g_{\mathfrak{p} }
\end{equation*}
so that 
\begin{equation}\label{Ichinomeasure}
 \prod_{\mathfrak{p} \in S} d g_\mathfrak{p}^{\mathrm{Ic}} = \zeta^{(S)}_F(2)^{-1} \prod_{\mathfrak{p} \in S} d g_{\mathfrak{p} }. 
\end{equation}

We apply Ichino's formula to triples of cuspidal automorphic representations $\pi_1, \pi_2, \sigma\in \widehat{[G]}_{\mathrm{gen}}^{\mathrm{u}}$. For $\varphi_1 \in \pi_1$, $\varphi_2 \in \pi_2$, $\varphi_3 \in\sigma$, and any large enough finite set of places $S$, one has
\begin{equation*}
  \Big\lvert \int_{[G]} \varphi_1 \varphi_2 \varphi_3 \Big\rvert^2
  =
  \frac{1}{8}
  \frac{\zeta_F^{(S)}(2)^2 L^{(S)}(\tfrac{1}{2}, \pi_1 \otimes \pi_2 \otimes \sigma)}{
    L^{(S)}(1, \pi_1, \Ad)L^{(S)}(1, \pi_2, \Ad)L^{(S)}(1, \sigma, \Ad)
  }
    \int_{  G(F_S)} \langle g \varphi_1, \varphi_1 \rangle
  \langle g \varphi_2, \varphi_2 \rangle
  \langle g \varphi_3, \varphi_3 \rangle \, d g^{\mathrm{Ic}}.
\end{equation*}

Using that all representations equal to its complex conjugate together with \eqref{Ichinomeasure}, we obtain the variant formula
\begin{align*}
  \Big\lvert \int_{[G]} \varphi_1 \overline{\varphi_2 \varphi_3} \Big\rvert^2
  &=
  \frac{1}{8}
  \frac{\zeta_F^{(S)}(2) L^{(S)}(\tfrac{1}{2}, \pi_1 \otimes \overline{\pi_2} \otimes \sigma)}{
  L^{(S)}(1, \pi_1, \Ad) L^{(S)}(1, \pi_2, \Ad) L^{(S)}(1, \sigma, \Ad)
  }
  \int_{G(F_S)} \langle g \varphi_1, \varphi_1 \rangle
  \overline{\langle g \varphi_2, \varphi_2 \rangle
  \langle g \varphi_3, \varphi_3 \rangle} \, d g.
\end{align*}

By definition, we have 
\begin{equation}\label{eq:cj8owit0x0}
  \lvert c_{\sigma}^{(S)} \rvert^2
  =
     \frac{\lvert   \int_{[G]} \varphi_1 \overline{\varphi_2 \varphi_3}\rvert^2 }{
       \lvert \int_{N(F_S) \backslash G(F_S)}
       W_{1} \overline{W_{3} f_{2}} \rvert^2
    } 
     \frac{
      \lvert \int_{A(F_S)}
      \lvert W_{3} \rvert^2 \rvert^2
    }{ \lvert \langle \varphi_3, \varphi_3  \rangle_{\sigma} \rvert^2 }.
  \end{equation}
  By \eqref{eq:cj7ok440fl}, \eqref{eq:cj7ok542dg} and \eqref{cusp-pairing}, the final fraction on the right hand side of \eqref{eq:cj8owit0x0} evaluates to
  \begin{equation*}
 4 \Big( \frac{
    \zeta_F^{(S)}(2)
  }
  {
  2 L^{(S)}(1, \sigma, \Ad)
  } \Big)^2.
\end{equation*}
On the other hand, by what we saw above, the first fraction on the right hand side of \eqref{eq:cj8owit0x0} evaluates to
\begin{equation*}
\frac{1}{8}
  \frac{\zeta_F^{(S)}(2) L^{(S)}(\tfrac{1}{2}, \pi_1 \otimes \overline{\pi_2} \otimes \sigma)}{
   L^{(S)}(1, \pi_1, \Ad)L^{(S)}(1, \pi_2, \Ad)L^{(S)}(1, \sigma, \Ad)
  }
  \frac{\int_{  G(F_S)} \langle g \varphi_1, \varphi_1 \rangle
  \overline{\langle g \varphi_2, \varphi_2 \rangle
    \langle g \varphi_3, \varphi_3 \rangle} \, d g}{
  \lvert \int_{N(F_S) \backslash G(F_S)}
  W_{1} \overline{W_{3} f_{2}} \rvert^2
}.
\end{equation*}

We now rewrite the inner products involving $\varphi_{1}$ and $\varphi_3$ in terms of Whittaker functions.  Let us factor the Whittaker functions $W_j = W_{j,S} W_j^S$ for $j=1,3$ like we already have for $j = 2$, so that $W_j^S$ is spherical and takes the value one at the identity, and then further factor $W_{j,S} = \prod_{\mathfrak{p} \in S} W_{j, \mathfrak{p}}$.  By another application of \eqref{eq:cj7ok440fl} and \eqref{eq:cj7ok542dg}, we have
\begin{equation*}
  \langle g \varphi_j, \varphi_j  \rangle
  = 2
  \frac{
    L^{(S)}(1, \pi_j, \Ad)
  }{
    \zeta_F^{(S)}(2)
  }
  \prod_{\mathfrak{p} \in S}
  \langle g_{\mathfrak{p}} W_{j,\mathfrak{p}}, W_{j,\mathfrak{p}} \rangle
\end{equation*}
for $j = 1, 2$ and analogously for $j = 3$ with $\sigma$ in  place of $\pi_j$.  Recall that $\pi_{2,\mathfrak{p}}$ belongs to the principal series for each $\mathfrak{p} \in S$, and that we have normalized inner products in such a way that $f_{2,\mathfrak{p}} \mapsto W_{2,\mathfrak{p}}$ is an isometry (cf.\ \eqref{isom}). We obtain in this way
\begin{align*}
  \lvert c_{\sigma }^{(S) } \rvert^2 
    = & \frac{ \zeta_F^{(S)}(2)L^{(S)}(\tfrac{1}{2}, \pi_1 \otimes \overline{\pi_2} \otimes \sigma)}{8}   \frac{8}{( \zeta_F^{(S)}(2))^3} 
    \\
  & 
 \times    \frac{\int_{ G(F_S)} \langle g W_{1,S}, W_{1,S} \rangle
  \overline{\langle g f_{2,S}, f_{2,S} \rangle
    \langle g W_{3,S}, W_{3,S} \rangle} \, d g}{
  \lvert \int_{N(F_S) \backslash G(F_S)}
    W_{1} \overline{W_3 f_{2}} \rvert^2
    }   
    \Big( \frac{
    \zeta_F^{(S)}(2)
  }
  {
   L^{(S)}(1, \pi_3, \Ad)
  } \Big)^2.
\end{align*}
We now apply Corollary \ref{corollary:cj7mbfn7cs} to see that the first factor in the second line  evaluates to some quantity $G_S(\dotsb)$ as in the statement.  This completes the proof.  
\end{proof}

\subsection{Shifted convolution sums: Proof of Theorem \ref{thm-shift}}\label{sec36}

We start with a preliminary lemma and recall   the notation \eqref{def-h-vee}, \eqref{Wlambda} and \eqref{Sh}.

\begin{lemma}\label{lem37}
  Let $\pi_1, \pi_2, \sigma \in \widehat{[G]}^{\text{\rm u}}_{\text{{\rm gen}}, S}$, 
  with at least one of $\pi_1$ or $\pi_2$ cuspidal.  Assume also that $\pi_2 $ belongs to the principal series at each place.  Let $\varphi_1 \in \pi_1$ and $\varphi_2 \in \pi_2$ be unramified outside $S$.  Then, for $g \in G(F_S)$ and $0 \not= b \in \mathcal{O}_F[1/S]$, we have 
\begin{equation}\label{eq:cj3shwic31}
  \sum_{\phi \in \mathcal{B}(\sigma)}
  \frac{
   \int_{[G]} \varphi_1\overline{\varphi_2 \phi}
}{
  \langle \phi, \phi  \rangle_\sigma 
  } W_\phi(a(b) g)
  =
  c_\sigma^{(S)}
W_{\sigma}^{(S)}(a(b))
  h^{\vee}(\sigma_S, a(b) g).
\end{equation}

\end{lemma}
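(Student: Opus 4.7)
The plan is to apply Lemma \ref{lemma:cj3nf2fkak} to each term of the sum, factorize the global Whittaker function $W_\phi$ into its components inside and outside $S$, and then recognize the resulting $S$-adic sum as the transform $h^\vee(\sigma_S, a(b)g)$ defined by \eqref{def-h-vee}.

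First I would choose a factorizable orthonormal basis $\mathcal{B}(\sigma) = \{\phi_0^{(S)}\} \otimes \mathcal{B}(\sigma_S)$ of $K$-isotypic vectors, with $\phi_0^{(S)}$ the normalized spherical vector at each place outside $S$ and $\mathcal{B}(\sigma_S)$ an orthonormal basis of $\sigma_S$ with respect to the $S$-adic Kirillov inner product $\int_{A(F_S)} \lvert W \rvert^{2}$ from \eqref{eq:cnebswcf2w}. Since $\varphi_1,\varphi_2$ are unramified outside $S$, the triple product $\int_{[G]}\varphi_1\overline{\varphi_2\phi}$ vanishes on any $\phi\in\mathcal{B}(\sigma)$ whose outside-$S$ component is not spherical, so only the basis vectors above actually contribute. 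For such a $\phi$, parameterized by $W \in \mathcal{B}(\sigma_S)$, the global Whittaker function factorizes as
\begin{equation*}
  W_\phi(a(b)g) = W_\sigma^{(S)}(a(b)) \, W(a(b)_S g),
\end{equation*}
where $W_\sigma^{(S)}(a(b))$ is defined by \eqref{Wlambda}; this uses only that $b$ is integral outside $S$, not that $b$ is an $S$-unit.

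Applying Lemma \ref{lemma:cj3nf2fkak} with $\varphi_3 = \phi$, and noting that the chosen Kirillov normalization makes the denominator $\int_{A(F_S)}\lvert W\rvert^2$ in \eqref{eq:cj4umojgpv} equal to $1$, the left-hand side of \eqref{eq:cj3shwic31} becomes
\begin{equation*}
  c_\sigma^{(S)} \, W_\sigma^{(S)}(a(b)) \sum_{W \in \mathcal{B}(\sigma_S)} W(a(b)_S g) \int_{N(F_S)\backslash G(F_S)} W_{1,S}\,\overline{f_{2,S}\, W}.
\end{equation*}
By the natural extension of the defining sum \eqref{def-h-vee} to a general group element as its second argument, the displayed $W$-sum is precisely $h^\vee(\sigma_S, a(b)g)$, completing the proof modulo convergence.

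The only genuine analytic point is absolute convergence of the $\mathcal{B}(\sigma_S)$-sum, which follows from the discussion preceding Lemma \ref{bessel-transform}: the trilinear integral decays rapidly in the $K_S$-type of $W$ via repeated application of $\mathcal{D}$, while $W(a(b)_S g)$ grows at most polynomially in that $K_S$-type, as quantified in Lemma \ref{property-h-vee}. The main obstacle I anticipate is thus purely notational, namely verifying that the normalizations of the basis, of the Jacquet intertwiner $f_{2,S}\leftrightarrow W_{2,S}$ used in defining $h^\vee$, and of the Kirillov inner product all line up consistently with those implicit in Lemma \ref{lemma:cj3nf2fkak}; no new analytic input beyond these two lemmas is needed.
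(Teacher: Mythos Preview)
Your proof is correct and follows essentially the same route as the paper: restrict the basis to factorizable vectors unramified outside $S$, apply Lemma~\ref{lemma:cj3nf2fkak} term-by-term, factor the Whittaker function using $S$-integrality of $b$, and recognize the resulting $S$-adic sum as $h^\vee(\sigma_S,a(b)g)$. The only differences are cosmetic---you normalize the basis to make the Kirillov denominator equal to $1$ whereas the paper carries it explicitly, and you add a convergence discussion that the paper leaves implicit.
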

\begin{proof}
  Let $\sigma_S$ denote the product of local components of $\sigma$ at places inside $S$.  By tensoring with normalized unramified vectors at places outside $S$, we identify $\sigma_S$ with the unramified-outside-$S$ subspace of $\sigma$.  Since $\varphi_1$ and $\varphi_2$ are unramified outside $S$, we may assume that the sum over $\phi$ is restricted to a factorizable orthogonal basis $\mathcal{B}(\sigma_S)$ for $\sigma_S$. 
  We then evaluate the left hand side of \eqref{eq:cj3shwic31} using the definition \eqref{eq:cj4umojgpv} of $c_\sigma^{(S)}$ and the factorization
  \begin{equation*}
    W_{\phi } (a (b) g) =
    W_{\sigma}^{(S)}(a(b))
    W_{\phi, S} (a (b) g)
  \end{equation*}
  of the Whittaker function of $\phi$ into its components outside and inside $S$ (since $b$ is $S$-integral), giving
  \begin{equation*}
    c_\sigma^{(S)}
 W_{\sigma}^{(S)}(a(b))
    \sum_{W \in \mathcal{B}(\sigma_S)}
    W(a(b) g)
    \frac{    \int_{N(F_S) \backslash G(F_S)}
      W_{1,S} \overline{W f_{2,S}}
    }{
      \int_{A(F_S)}
      \lvert W \rvert^2
    }.
  \end{equation*}
  We recognize this last sum as the $S$-adic extension \eqref{Sh}
 \begin{equation*}
h^\vee (\sigma_S, a (b) g)
\end{equation*}
   of the transform  \eqref{def-h-vee}.  
 \end{proof}

 We are now ready to complete the \textbf{proof of Theorem \ref{thm-shift}}. 
 We assume that $h$ is factorizable, i.e.,
 \begin{equation*}
    h(x,y) = \prod_{\mathfrak{p} \in S} h_\mathfrak{p}(x_\mathfrak{p},y_\mathfrak{p}),
  \end{equation*}
  where each $h_\mathfrak{p}$ is a smooth function of compact support on $F_\mathfrak{p}^\times \times F_\mathfrak{p}^\times$, which moreover factors as a product
  \begin{equation}\label{eqn:cj3nfxrs67}
    h_\mathfrak{p}(x_\mathfrak{p},y_\mathfrak{p})
=W_{1,\mathfrak{p}}(a(x_\mathfrak{p}))\overline{W_{2,\mathfrak{p}}(a(y_\mathfrak{p}))}.
  \end{equation}
  
  We choose factorizable automorphic forms $\varphi_1 \in \pi_1, \varphi_2 \in \pi_2$, with Whittaker functions
  \begin{equation*}
    W_{\varphi_1}(g) = \prod_{\mathfrak{p}} W_{1,\mathfrak{p}}(g_\mathfrak{p}),
    \qquad 
    W_{\varphi_2}(g) = \prod_{\mathfrak{p}} W_{2,\mathfrak{p}}(g_\mathfrak{p})
  \end{equation*}
  such that
  \begin{itemize}
  \item for $\mathfrak{p} \in S$, each $W_{j,\mathfrak{p}}$ is as in \eqref{eqn:cj3nfxrs67}, while
  \item for $\mathfrak{p} \notin S$, each $W_{j,\mathfrak{p}}$ is normalized spherical (i.e., $\PGL_2(\mathfrak{o}_\mathfrak{p})$-invariant and taking the value $1$ at the identity).
  \end{itemize}
  This is possible by the basic theory of the Kirillov model.  We then evaluate in two ways the integral
  \begin{equation*}
    \int_{  \mathbb{A} /  F}
    \varphi_1 (n (x) )
    \overline{\varphi_2 (n (x) )}
    \psi (- b x) \, d x.
  \end{equation*}
  On the one hand, using the Whittaker--Fourier expansions of the $\varphi_j$ and Parseval's identity, we obtain the left hand side of the desired identity.  On the other hand, we can first expand the product $\varphi_1 \varphi _2 $ spectrally, so that using the notation \eqref{spec} we obtain 
  \begin{equation*}
   \int_{\widehat{[G]}_{\mathrm{gen}, S}^{\mathrm{u}}}
    \sum_{\phi \in \mathcal{B}(\pi)}
    \frac{
      \int_{[G]} \varphi_1 \overline{\varphi_2 \phi} 
      \int_{x \in \mathbb{A} / F }
      \phi(n(x)) \psi(-b x)
      \,d x 
    }{
      \langle \phi, \phi  \rangle_\pi 
    }
    \,d \pi  .
  \end{equation*}
Note that since $b \neq 0$, the second parenthetical integral vanishes when $\pi$ is one-dimensional.  We thereby reduce to considering the contribution from those $\pi$ that are generic, i.e., cuspidal or Eisenstein.

  In that case, the integral in question evaluates to $W_\phi (a (b))$.  We may assume that our (orthogonal) basis $\mathcal{B}(\pi)$ is chosen to be factorizable and that each element is either unramified outside $S$ (i.e., $\PGL_2(\mathfrak{o}_\mathfrak{p})$-invariant for each $\mathfrak{p} \notin S$) or orthogonal to anything unramified outside $S$.  Then, since $\varphi_1$ and $\varphi_2$ are unramified outside $S$, we obtain a nonzero contribution only from those $\phi$ that are unramified outside $S$.
  
  An appeal to Lemma \ref{lem37} completes the proof.

\subsection{Local preliminaries for the proof of Theorem \ref{thm-moment}}\label{sec37a}
Let $F$ be a local field, write $G := \PGL_2(F)$ as usual, and let $\pi_1, \pi_2 \in \widehat{G}_{\mathrm{gen}}$, realized as usual in their $\psi$-Whittaker models with respect to a nontrivial unitary character $\psi$ of $F$.

We will on several occasions appeal to the standard theory of local Hecke/Jacquet--Langlands integrals, so let us recall the main conclusions of that theory.  Let $\pi \in \widehat{G}^{\mathrm{u}}_{\mathrm{gen}}$ and $\chi \in \widehat{A}$.  Let us realize $\pi$ in its $\psi$-Whittaker model, as a space of functions $W$ on $G$.  Then for $W \in \pi$ and $\chi \in \widehat{A}$ with $\Re(\chi)$ sufficiently large, the integral $\int_A W \chi := \int_{A} W(a) \chi(a) \, d a$ converges absolutely.  It moreover admits a meromorphic continuation to the complex plane, with the further property that the ratio
\begin{equation*}
  \frac{\int_A W \chi }{ L (\tfrac{1}{2} , \pi \otimes \chi)}
\end{equation*}
defines an entire function of $\chi$.  Finally, if $\psi$, $\pi$ and $\chi$ are unramified, and if $W$ is the unramified vector with $W(1) = 1$, then the above ratio evaluates (in view of our measure normalization on $A$) to $1 / \zeta_F(1)$.

Recall that, to $W_1 \in \pi_1$ and $W_2 \in \pi_2$, we have in \eqref{def-h} attached the function $h$ on $F^\times \times F^\times$ by the formula $h(y_1,y_2) = W_1(a(y_1)) \overline{W_2(a(y_2))}$.  We also defined a dual function $h^\vee$ by \eqref{def-h-vee}, as well as $h^{\sharp}$ in \eqref{hsharp}. Since $h^\vee (\sigma, \bullet)$ is, by definition, an element of the Whittaker model of $\sigma$, we can appeal to the theory of local Hecke/Jacquet--Langlands integrals, which implies that the ratio
$h^\sharp (\sigma, \chi )/L (\tfrac{1}{2}, \sigma \otimes \chi)$ 
defines an entire function of $\chi$.

For notational convenience, we define some further transform. 
First, we let 
\begin{equation}\label{hflat}
  h^{\flat} : \widehat{F^{\times}} 
  \times F^\times \rightarrow \mathbb{C}, \quad (\chi, y) \mapsto \int_{F^\times } h (z, y z ) \chi(z) \,d^\times z = \int_{F^\times} W_1(a(z)) \overline{W_2(a(y z))} \chi(z) \, d^\times z.
\end{equation}
We next define
\begin{equation*}
  \tilde{h} :   \widehat{F^{\times}}  \times  \widehat{F^{\times}}  \rightarrow \mathbb{C}, \quad (\chi, \eta) \mapsto \int_{F^\times } h^\flat (\chi, y ) \eta(y) \,d^\times y = \Big( \int_{F^\times} W_1 \chi \eta^{-1}  \Big) \Big( \int_{F^\times} W_2 \eta  \Big).
\end{equation*}
Both of these definitions make sense for all arguments provided that $W_1$ and $W_2$ are compactly supported. To lighten the notation in the last formula, we made the convention to identity $W_j(a(.))$ with $W_j$ for $j = 1, 2$ which we will continue to use in similar situations.

Finally, for a character $\chi \in \widehat{F^{\times}}$ and a unitary character $\eta$, we define 
\begin{equation*}
  h^{\heartsuit}(\eta, \chi)
  :=
  \int_{F^\times }
  \sum_{f_3 \in \mathcal{B}(\mathcal{I}(\eta))}
  \Big( \int_{N \backslash G} W_1 \overline{W_2 f_3 }                 \Big)
  W_{f_3}(a(b)) \chi (b) \,d^\times b
\end{equation*}
which can be extended to non-unitary characters $\eta$ in the usual way. 

All local transforms in the subsection can be extended to $S$-adic versions in the obvious way. 

In a special configuration, the transform $h^{\heartsuit}$ simplifies considerably.
\begin{lemma}\label{lemma:cnfnkqp8fc}
  We have
  \begin{equation*}
  h^{\heartsuit}(|.|^{-1/2}\chi, \chi) =  \int_{  F^\times}
  ( w W_1 \cdot \overline{w W_2})(a(y))
  \chi^{-1}(y)
  \, d^\times y.
\end{equation*}
\end{lemma}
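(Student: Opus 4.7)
The plan is to perform the Mellin integration over $b$ first by exploiting a remarkable simplification specific to $\eta = |\cdot|^{-1/2}\chi$, and then recognize the remaining sum over $\mathcal{B}(\mathcal{I}(\eta))$ as a reproducing-kernel evaluation in the Bruhat model of $\mathcal{I}(\eta)$.

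First I substitute the Jacquet-integral expression \eqref{defn-intertwiner-explicit}, which gives $W_{f_3}(a(b)) = |b|^{1/2}\eta^{-1}(b)\hat f_3(b)$ with $\hat f_3(b) := \int_F f_3(wn(x))\psi^{-1}(xb)\,dx$.  At $\eta = |\cdot|^{-1/2}\chi$ the pre-factor collapses miraculously to $\chi(b)|b|^{1/2}\eta^{-1}(b) = |b|$.  Combined with $|b|\,d^\times b = db$ and Fourier inversion, this yields $\int_{F^\times} W_{f_3}(a(b))\chi(b)\,d^\times b = \int_F \hat f_3(b)\,db = f_3(wn(0)) = f_3(w)$.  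Exchanging the outer sum with this Mellin integral reduces the LHS to
\[
h^{\heartsuit}(|\cdot|^{-1/2}\chi,\chi) \;=\; \sum_{f_3 \in \mathcal{B}(\mathcal{I}(\eta))} \Psi_{12}(f_3)\, f_3(w), \qquad \Psi_{12}(f_3) := \int_{N\backslash G} W_1\,\overline{W_2 f_3}.
\]

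Next I expand $\Psi_{12}(f_3)$ in Bruhat coordinates $g = a(y)wn(x)$, with quotient measure $\frac{d^\times y}{|y|}\,dx$.  The transformation rule $\overline{f_3(a(y)g)} = \eta^{-1}(y)|y|^{1/2}\overline{f_3(g)}$ (which is literal for unitary $\eta$ and is the natural analytic continuation elsewhere) combines with the Bruhat Jacobian so that, for $\eta = |\cdot|^{-1/2}\chi$, the combined weight on $y$ equals $\chi^{-1}(y)\,d^\times y$ with no residual $|y|$ factor.  This gives
\[
\Psi_{12}(f_3) \;=\; \int_F \overline{f_3(wn(x))}\, T(x)\,dx,
\quad
T(x) := \int_{F^\times}\! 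W_1(a(y)wn(x))\overline{W_2(a(y)wn(x))}\,\chi^{-1}(y)\,d^\times y.
\]
Swapping the sum over $f_3$ with the remaining $x$-integral leaves us with the reproducing kernel $\sum_{f_3} f_3(wn(0))\overline{f_3(wn(x))}$ of $\mathcal{I}(\eta)$ in its Bruhat realization.  Because the restriction map $f_3 \mapsto f_3(wn(\cdot))$ is an $L^2(F,dx)$-isometry for unitary $\eta$ (via $\int_{B\backslash G}|f|^2 = \int_F |f(wn(x))|^2\,dx$, a consequence of \eqref{isom} and the relation between the Jacquet and Bruhat pictures), this kernel equals $\delta(x)$ distributionally.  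Hence the $x$-integral collapses to $T(0) = \int_{F^\times} W_1(a(y)w)\overline{W_2(a(y)w)}\chi^{-1}(y)\,d^\times y$, which is the RHS since $W_i(a(y)w) = (wW_i)(a(y))$.

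The main obstacle is rigorously justifying the formal manipulations at the non-unitary value $\eta = |\cdot|^{-1/2}\chi$, where the basis $\mathcal{B}(\mathcal{I}(\eta))$ and the Plancherel identity have no direct meaning.  The cleanest remedy is to carry out the same scheme first for a unitary $\eta$ in a neighborhood of $|\cdot|^{-1/2}\chi$, where the Mellin integral of step one produces a gamma factor $\gamma(\tfrac12, \eta^{-1}\chi)$ whose value at $\eta^{-1}\chi = |\cdot|^{1/2}$ equals one (via Tate's functional equation), and then analytically continue in $\eta$.  The compact-support hypothesis on $W_1(a(\cdot))$ and $W_2(a(\cdot))$ ensures that $T(x)$ and all relevant Fourier transforms are smooth and of Schwartz decay, so the interchanges of sum and integral and the use of the reproducing kernel are justified throughout.
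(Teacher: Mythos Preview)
Your proof is correct and follows essentially the same route as the paper.  Both arguments first collapse the $b$-integral to $f_3(w)$ via \eqref{defn-intertwiner-explicit} and Fourier inversion, then expand the Rankin--Selberg integral in Bruhat coordinates and finish by a Parseval/reproducing-kernel step; the paper merely inserts the cosmetic translation $g\mapsto gw$ beforehand (so the evaluation point becomes $1$ rather than $w$), phrases the final step as recognizing $T$ as an element of $\mathcal{I}(\eta)$ and evaluating its basis expansion at $1$, and handles the non-unitarity issue by simply fixing $\Re(\chi)=1/2$ at the outset and invoking meromorphic continuation afterwards rather than tracking a Tate $\gamma$-factor.
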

\begin{proof}
  It suffices to prove the formula in the case $\Re(\chi) = 1/2$, as the general statement follows then by meromorphic continuation. This assumption has the notational advantage that the first argument of $h^{\heartsuit}$ is unitary.

  Recall by \eqref{defn-intertwiner-explicit} that
  \begin{equation*}
  W_{f_3}(a(y)) = |y| \chi^{-1}(y) \int_{ F}
  f_3(w n(x)) \psi(- y x) \, d x
\end{equation*}
for $f_3 \in \mathcal{B}( \mathcal{I}(|.|^{-1/2} \chi))$, so that by Fourier inversion
\begin{equation*}
 f_3(w) =  \int_{F  }  \frac{W_{f_3}(a(b))}{|b|} \chi (b)\, db =   \int_{F^\times }
  W_{f_3}(a(b)) \chi (b) \,d^\times b 
\end{equation*}
and hence
\begin{align*}
  h^{\heartsuit}(\lvert . \rvert^{-1/2} \chi, \chi)
  &=\hspace{-0.25cm}
    \sum_{f_3 \in \mathcal{B}(\mathcal{I}(\lvert . \rvert^{-1/2} \chi))}
    \Big( \int_{N \backslash G} W_1 \overline{W_2 f_3 }                 \Big)
    f_3(w) = \hspace{-0.25cm}
                \sum_{f_3 \in \mathcal{B}(\mathcal{I}(\lvert . \rvert^{-1/2} \chi))}
    \Big( \int_{N \backslash G} w W_1\cdot \overline{w W_2 \cdot  f_3 }                 \Big)
    f_3(1)
\end{align*}
since $w^2 = 1$.  We have (recall that $\Re(\chi) = 1/2$)
\begin{align*}
  \int_{N \backslash G} w W_1 \overline{w W_2 \cdot  f_3 }
  &=
  \int_{ F}
  \int_{  F^\times}
  (w W_1 \cdot \overline{w W_2})(a(y) w n(x))
  \overline{f_3(a(y) w n(x))}
  \, d x
  \, \frac{d^\times y}{\lvert y \rvert} \\
  &=
  \int_{ F}
  \int_{ F^\times}
  (w W_1 \cdot \overline{w W_2})(a(y) w n(x))
    |y|^{1/2} \overline{|y|^{-1/2} \chi(y) f_3(w n(x))}
    \, d x
    \, \frac{d^\times y}{\lvert y \rvert} \\
  &=
    \int_{ F}
    \int_{  F^\times}
    (w W_1 \cdot \overline{w W_2})(a(y) w n(x))
    \chi^{-1}(y) \overline{ f_3(w n(x))}
    \, d x
    \, d^\times y.
\end{align*}
This last formula may be understood as the inner product of $f_3$ against the element of $\mathcal{I}(|.|^{-1/2} \chi)$ given by
\begin{equation*}
  g \mapsto \int_{F^\times}
  w W_1(a(t) g) \cdot \overline{w W_2(a(t) g)}
  \chi^{-1}(t)\, d^\times t.
\end{equation*}
It follows that
\begin{equation*}
  \sum_{f_3 \in \mathcal{B}(\mathcal{I}(\lvert . \rvert^{-1/2} \chi))}
  \Big( \int_{N \backslash G} w W_1\cdot \overline{w W_2 \cdot  f_3 }                 \Big)
  f_3(1) =  \int_{  F^\times}
  (w W_1 \cdot \overline{w W_2})(a(y))
  \chi^{-1}(y)
  \, d^\times y.
\end{equation*}
as claimed.
\end{proof}

\subsection{Moments of \texorpdfstring{$L$}{L}-functions}\label{sec37}
In this subsection we prove Theorem \ref{thm-moment}. Let $F$ be a number field.  Let $S$ be a finite set of places of $F$, containing all places that are archimedean or at which $F$ is ramified.  Let $\pi_1$ and $\pi_2$ be cuspidal automorphic representations for $G(F)$, unramified outside $S$.  Let $\varphi_1 \in \pi_1$ and $\varphi_2 \in \pi_2$ be factorizable vectors, unramified outside $S$, with factorizable Whittaker functions
\begin{equation*}
  W_j (g) = \prod_{\mathfrak{p}} W_{j, \mathfrak{p} } (g_{\mathfrak{p} }),
\end{equation*}
normalized as in the discussion before \eqref{Wlambda}.  Let $\chi : [A] \rightarrow \mathbb{C}^\times$ be a Hecke character, also unramified outside $S$.  By our cuspidality assumption, the integral
\begin{equation}\label{eq:cj401v58in}
  \mathcal{P}(\chi) :=   \int_{[A]} \varphi_1 \overline{\varphi_2} \chi
\end{equation}
converges absolutely.  The proof of Theorem \ref{thm-moment} consists of expanding this integral spectrally in two different ways.

First, we apply the Mellin--Parseval relation on $[A]$ to obtain
\begin{equation*}
\mathcal{P}(\chi) =   \int_{ \widehat{ [A]}^{\mathrm{u}} }
  \Big( \int_{[A]} \varphi_1 \chi \eta^{-1}  \Big)
  \Big( \int_{[A]} \overline{\varphi_2} \eta \Big)
  \, d \eta.
\end{equation*}
Here, because the representations are cuspidal and the vectors are smooth, the integrand defines a holomorphic function of $\eta$, of rapid decay, and there are no convergence issues.  Note that the integrand is non-zero only for $\eta$ unramified outside $S$. We may unfold each of the two integrals over $[A]$, giving
\begin{equation}\label{parseval}
  \mathcal{P}(\chi) = \int_{ \widehat{ [A]}^{\mathrm{u}} }
  \frac{L^{(S)}(\tfrac{1}{2}, \pi_1 \otimes \chi \otimes \eta^{-1} )  L^{(S)}(\tfrac{1}{2}, \overline{\pi_2} \otimes \eta)  }{\zeta_F^{(S), \ast}(1)^2}
  \int _{F_S^\times} W_1 \chi \eta^{-1}
  \int _{F_S^\times} \overline{W_2}\eta.
\end{equation}

On the other hand, we can spectrally expand over $[G]$.  As a preliminary maneuver, we consider the contribution of the constant term $(\varphi_1 \overline{\varphi_2})_N$ of $\varphi_1 \overline{\varphi_2}$ to the integral \eqref{eq:cj401v58in}, where we adopt, for $\Psi \in C^\infty(N(F) \backslash G(\mathbb{A}))$, the customary notation
\begin{equation}\label{eq:cnehsj3zhr}
  \Psi_N(g) := \int_{ N(F)\backslash N(\Bbb{A})} \Psi(n g) \, d n.
\end{equation}

\begin{lemma}\label{lemma:cnehsf3ch3}
 For $\Re(\chi) > 0$ we have
 \begin{equation*}
   \int_{ [A]} (\varphi _1 \overline{\varphi_2})_N \chi =
   h_S^\flat (\chi, 1)
   \frac{L^{(S) } (1, \pi_1 \otimes \overline{\pi_2} \otimes \chi)}{\zeta^{(S), \ast}_F(1) L^{(S)} (2, \chi^2)},
 \end{equation*}
 which can be extended as an equality of meromorphic functions. 
\end{lemma}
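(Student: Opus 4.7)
The plan is a direct Rankin--Selberg unfolding.  I would first compute the constant term via Whittaker--Fourier expansion, then unfold the $[A]$-integral to an $\mathbb{A}^\times$-integral, and finally factorize using the standard unramified local computation.

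By cuspidality, for $j = 1, 2$, I have the absolutely convergent Fourier expansion $\varphi_j(g) = \sum_{\alpha \in F^\times} W_j(a(\alpha) g)$.  Substituting into the definition \eqref{eq:cnehsj3zhr} of the constant term and using $a(\alpha) n(x) = n(\alpha x) a(\alpha)$ together with the $\psi$-equivariance of the Whittaker functions, the orthogonality
\begin{equation*}
\int_{N(F) \backslash N(\mathbb{A})} \psi((\alpha - \beta) x) \, d x = \mathbf{1}_{\alpha = \beta}
\end{equation*}
forces $\alpha = \beta$, yielding
\begin{equation*}
(\varphi_1 \overline{\varphi_2})_N(a(y)) = \sum_{\alpha \in F^\times} W_1(a(\alpha y)) \overline{W_2(a(\alpha y))}.
\end{equation*}
For $\Re \chi > 0$, the bounds on Whittaker functions in Lemma \ref{bound-whittaker} together with rapid decay at infinity render the resulting sum-integral
\begin{equation*}
\int_{[A]} \sum_{\alpha \in F^\times} W_1(a(\alpha y)) \overline{W_2(a(\alpha y))} \chi(y) \, d^\times y
\end{equation*}
absolutely convergent.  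Using the $F^\times$-invariance of $\chi$ and the identity $\int_{\mathbb{A}^\times} f = \sum_{\alpha \in F^\times} \int_{F^\times \backslash \mathbb{A}^\times} f(\alpha \cdot)$, I then unfold to
\begin{equation*}
\int_{[A]} (\varphi_1 \overline{\varphi_2})_N \chi = \int_{\mathbb{A}^\times} W_1(a(y)) \overline{W_2(a(y))} \chi(y) \, d^\times y.
\end{equation*}

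Factorizing via \eqref{haar} gives a product of local integrals.  At each $\mathfrak{p} \notin S$, the vectors and character are unramified, and the standard Hecke zeta-integral computation (essentially the Casselman--Shalika formula, generalizing the diagonal case $\pi_1 = \pi_2$, $\chi = 1$ spelled out in \eqref{eq:cj7ok542dg}) gives
\begin{equation*}
\zeta_\mathfrak{p}(1) \int_{F_\mathfrak{p}^\times} W_{1,\mathfrak{p}}(a(y)) \overline{W_{2,\mathfrak{p}}(a(y))} \chi_\mathfrak{p}(y) \, d^\times y = \frac{L_\mathfrak{p}(1, \pi_1 \otimes \overline{\pi_2} \otimes \chi_\mathfrak{p})}{L_\mathfrak{p}(2, \chi_\mathfrak{p}^2)}.
\end{equation*}
At each $\mathfrak{p} \in S$, the remaining local integral is precisely $h_\mathfrak{p}^\flat(\chi_\mathfrak{p}, 1)$ by the definition \eqref{hflat}, so the product over $\mathfrak{p} \in S$ equals $h_S^\flat(\chi, 1)$.

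Finally, since $S$ contains all archimedean and all $F$-ramified places, $\zeta_F^{(S)} = \xi_F^{(S)}$ and $\xi_F^*(1) = \zeta_F^{(S), *}(1) \prod_{\mathfrak{p} \in S} \zeta_\mathfrak{p}(1)$.  Assembling the factors, the $\zeta_\mathfrak{p}(1)$ from the measure normalization \eqref{haar} cancel exactly against those in the unramified calculation, leaving the stated formula in the region $\Re \chi > 0$.  The identity then extends as one of meromorphic functions in $\chi$: the right-hand side is manifestly meromorphic via the completed $L$-function and the entirety of $\chi \mapsto h_S^\flat(\chi, 1)$, and the left-hand side extends accordingly.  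The only real subtlety in the argument is this bookkeeping of $\zeta_\mathfrak{p}(1)$ factors coming from the chosen measure normalization against the ones arising in the unramified local Rankin--Selberg computation; everything else is routine.
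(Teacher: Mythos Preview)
Your proof is correct and follows essentially the same approach as the paper: compute the constant term via the Whittaker--Fourier expansion, unfold the $[A]$-integral to an $\mathbb{A}^\times$-integral, and then factorize using \eqref{haar} together with the standard unramified Rankin--Selberg local computation. Your explicit tracking of the $\zeta_\mathfrak{p}(1)$ factors is a helpful addition, but the strategy is identical.
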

\begin{proof}
In view of the Whittaker expansions
\begin{equation*}
\varphi_j(n(x) a(y)) = \sum_{b \in F^\times } \psi(b x) W_j(a(b y)),
\end{equation*}
the constant term in question evaluates to 
\begin{equation*}
(\varphi_1 \overline{\varphi_2})_N(a(y)) = \sum_{b \in F^\times } W_1(b y) \overline{W_2}(b y).
\end{equation*}
We consider the double sum/integral
\begin{equation}\label{eq:cnee0vfega}
  \int_{  [A] } \chi(y) \sum_{b \in F^\times } W_1(a(b y)) \overline{W_2}(a(b y)) \,d^\times y.
\end{equation}
By a Rankin--Selberg estimate, we obtain 
\begin{equation*}
  \sum_{b \in F^\times } |W_1(a(b y)) \overline{W_2}(a(b y))| \ll_{\eps,N, W_1, W_2}
  \min(\lvert y \rvert^{-\eps}, y^{-N})
\end{equation*}
for any $N, \varepsilon > 0$, which shows that \eqref{eq:cnee0vfega} converges absolutely for $\Re(\chi) > 0$.

Integrating against $\chi(y)$ (for $\Re(\chi) > 0$) over $y \in [A]$ gives 
\begin{equation}\label{eq:cj401wpnl7}
  \int_{[A]} (\varphi _1 \overline{\varphi_2})_N \chi
  =
  \int_{ \mathbb{A}^{\times}} W_1(a(y)) \overline{W_2}(a(y)) \chi(y) \,d^\times y.
\end{equation}

The right hand side of \eqref{eq:cj401wpnl7} factors as a convergent product over the places and can be  computed using \eqref{haar}.  The contribution from places inside $S$ is $ h_S^\flat (\chi, 1)$, while that from places not in $S$ is $ L^{(S) } (1, \pi_1 \otimes \overline{\pi_2} \otimes \chi)/L^{(S)} (2, \chi^2)$ (see for instance \cite[Prop 3.8.1]{Bump}).  This completes the proof.
\end{proof}

It remains to treat the contribution of the difference $\varphi_1 \overline{\varphi_2} - (\varphi_1 \overline{\varphi_2} )_N $.
Recall the spectral expansion \eqref{spec}, which we may rewrite as follows: for $\Psi \in C_c^\infty ([G])$,
we have
\begin{equation}\label{eq:cnehsletef}
  \Psi(g) =
  \int_{\widehat{[G]}^{\mathrm{u}}}
  \Psi_{\sigma}(g) \, d \sigma,
  \qquad
  \Psi_{\sigma}(g) := \sum_{\phi \in \mathcal{B}(\sigma)}
  \phi(g)
  \frac{
    \int_{[G] } \Psi \bar{\phi}
  }{
    \langle \phi, \phi \rangle_{\sigma}
  }.
\end{equation}
The compact integral \eqref{eq:cnehsj3zhr} that defines the constant term commutes with this spectral expansion, so we may write
\begin{equation*}
  \Psi_N (g) =
  \int_{\widehat{[G]}^{\mathrm{u}}}
  \Psi_{\sigma,N}(g) \, d \sigma,
\end{equation*}
where $\Psi_{\sigma,N}$ is attached to $\Psi_\sigma$ as in \eqref{eq:cnehsj3zhr}.

With this notation, the difference in question expands for $\Re(\chi) > 1/2$ as the following absolutely convergent double integral:
\begin{equation}\label{eq:cnehsrdbfj}
  I(\chi)
  :=
  \int_{ [A]}
  \int_{ \widehat{[G]}^{\mathrm{u}}_{\mathrm{gen}}}
  \left( ( \varphi_1 \overline{\varphi_2})_\sigma(y)  - (\varphi_1 \overline{\varphi_2})_{\sigma, N}(y)  \right) \chi(y) \, d \sigma \, d ^\times y =:   \int_{\widehat{[G]}^{\mathrm{u}}_{\mathrm{gen}}} I_\sigma(\chi) \, d \sigma, 
\end{equation}
say, where 
\begin{equation}\label{isigma}
  I_\sigma(\chi) :=
\int_{ [A]}
  \left( ( \varphi_1 \overline{\varphi_2})_\sigma(y)  - (\varphi_1 \overline{\varphi_2})_{\sigma, N}(y)  \right) \chi(y) \, d ^\times y.
\end{equation}
For details concerning the convergence, we refer to the final paragraph of \cite[\S8.8]{Ne}; the main point is that for $\Psi = \varphi_1\overline{\varphi_2}$, $d \geq 0$ and $\eps >0$, we have
\begin{equation*}
  \Psi_\sigma(y)  - \Psi_{\sigma, N}(y)
  \ll_{\Psi,d, \eps} \frac{\min( \lvert y \rvert^{- 1/2 - \eps }, \lvert y \rvert^{- d} )}{C(\sigma)^{d}}
\end{equation*}
where $C(\sigma)$ denotes the analytic conductor of $\sigma$. In verifying this, the only subtle point is to understand the $y \rightarrow 0$ asymptotics in the case that $\sigma$ is Eisenstein; the factor $\lvert y \rvert^{- 1/2 - \eps}$ comes from estimating the constant term of $\Psi_\sigma(w g w) = \Psi_\sigma(g w)$.  In particular, in the special case that $\sigma$ is cuspidal, the above estimate may be refined to
\begin{equation*}
  \Psi_\sigma(y) = \Psi_\sigma(y)  - \Psi_{\sigma, N}(y)
  \ll_{\Psi,d, \eps} \frac{\min \left( \lvert y \rvert^{d}, \lvert y \rvert^{- d} \right)}{C(\sigma)^{d}}.
\end{equation*}

\begin{lemma}\label{lemma:cnehsf3kvv}
The function $I_\sigma(\chi)$, defined initially for $\Re(\chi) > 1/2$, admits a meromorphic continuation to all $\chi$. More precisely, the ratio $I_\sigma(\chi) / \Lambda (\tfrac{1}{2},\sigma \otimes \chi)$ is holomorphic in $\chi$. We have
\begin{equation}\label{claim1}
  I_\sigma(\chi) = \frac{L^{(S)}( \tfrac{1}{2}, \sigma \otimes \chi)}{ \zeta_F^{(S), *}(1)}
  c_\sigma^{(S)} h_S^\sharp (\sigma, \chi ).
\end{equation}
If $\sigma = \sigma(\eta)$ is Eisenstein, then
\begin{equation}\label{claim2}
  I_{\sigma(\eta)}(\chi) =
  \frac{L ^{(S)}(\tfrac{1}{2}, \pi_1 \otimes \overline{\pi_2} \otimes \eta^{-1}) L^{(S)}(\tfrac{1}{2},\sigma(\eta) \otimes \chi)  }{ \zeta^{(S), \ast}_F(1)^2 L^{(S)}(1, \eta^{2}) L^{(S)}(1, \eta^{-2})}
  h_S^{\heartsuit}(\eta, \chi).
  \end{equation}
\end{lemma}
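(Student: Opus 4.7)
The plan is to spectrally expand $\varphi_1\overline{\varphi_2}$ and then unfold the non-constant Whittaker--Fourier expansion against the summation $\sum_{b\in F^\times}$. The starting point is the standard identity
\begin{equation*}
(\phi - \phi_N)(a(y)) = \sum_{b \in F^\times} W_\phi(a(by)),\qquad \phi\in\sigma,
\end{equation*}
valid for both cuspidal and Eisenstein $\phi$. Substituting this into the definition \eqref{isigma}, applying the spectral expansion \eqref{eq:cnehsletef}, and unfolding $\sum_{b \in F^\times} \int_{[A]}$ to $\int_{\mathbb{A}^\times}$ (using that $\chi$ is $F^\times$-invariant), I obtain for $\Re(\chi)$ large the absolutely convergent identity
\begin{equation*}
I_\sigma(\chi) = \sum_{\phi \in \mathcal{B}(\sigma)} \frac{\int_{[G]} \varphi_1 \overline{\varphi_2}\, \bar\phi}{\langle \phi, \phi \rangle_\sigma} \int_{\mathbb{A}^\times} W_\phi(a(y))\chi(y) \, d^\times y,
\end{equation*}
with only $\phi$ unramified outside $S$ contributing.

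To deduce \eqref{claim1}, I would factor the inner global Hecke integral via \eqref{haar}: at each $\mathfrak{p}\notin S$ the local integral equals $L_\mathfrak{p}(\tfrac{1}{2},\sigma_\mathfrak{p}\otimes\chi_\mathfrak{p})/\zeta_\mathfrak{p}(1)$, and the Euler product, combined with $\xi_F^*(1) = \bigl(\prod_{\mathfrak{p}\in S}\zeta_\mathfrak{p}(1)\bigr)\zeta_F^{(S),*}(1)$, extracts the prefactor $L^{(S)}(\tfrac{1}{2},\sigma\otimes\chi)/\zeta_F^{(S),*}(1)$, leaving the local integrals $\int_{F_\mathfrak{p}^\times} W_{\phi,\mathfrak{p}}\chi_\mathfrak{p}$ at $\mathfrak{p}\in S$. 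Invoking Lemma \ref{lemma:cj3nf2fkak} to rewrite $\int\varphi_1\overline{\varphi_2}\bar\phi / \langle\phi,\phi\rangle_\sigma$ as $c_\sigma^{(S)}$ times a local Rankin--Selberg ratio, and recognizing the resulting sum over $\mathcal{B}(\sigma_S)$ as the $S$-adic version of \eqref{def-h-vee}--\eqref{hsharp} defining $h_S^\sharp(\sigma,\chi)$, then yields \eqref{claim1}.

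For the alternative Eisenstein formula \eqref{claim2}, I would instead apply the Rankin--Selberg unfolding \eqref{unfolding} (with inducing character $\eta$ in place of the $\chi$ used there) directly to $\int_{[G]}\varphi_1\overline{\varphi_2\,\mathrm{Eis}(f)}$, use the Casselman--Shalika identity \eqref{eq:cnfnkrem0h} for the Whittaker function of $\mathrm{Eis}(f)$ outside $S$, and apply the Eisenstein norm formula coming from \eqref{regEuler} together with the induced-to-Kirillov isometry \eqref{isom}. The Euler factors at places outside $S$ collapse into
\begin{equation*}
\frac{L^{(S)}(\tfrac{1}{2},\pi_1\otimes\overline{\pi_2}\otimes\eta^{-1})\,L^{(S)}(\tfrac{1}{2},\sigma(\eta)\otimes\chi)}{\zeta_F^{(S),*}(1)^2\,L^{(S)}(1,\eta^2)\,L^{(S)}(1,\eta^{-2})},
\end{equation*}
while the residual sum over $\mathcal{B}(\mathcal{I}(\eta_S))$ at $S$ reassembles into $h_S^\heartsuit(\eta,\chi)$ by its definition. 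The meromorphic continuation of $I_\sigma(\chi)$ and the holomorphy of $I_\sigma(\chi)/\Lambda(\tfrac{1}{2},\sigma\otimes\chi)$ then follow from the standard fact, recalled in \S\ref{sec37a}, that $h_S^\sharp(\sigma,\chi)/L_S(\tfrac{1}{2},\sigma\otimes\chi)$ is entire.

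The main obstacle I anticipate is the careful bookkeeping of measure and normalization constants --- in particular reconciling the completed $\xi_F^*(1)$ from \eqref{haar} with the partial $\zeta_F^{(S),*}(1)$ appearing in the target formulas, and tracking the factors $L^{(S)}(1,\eta^{\pm 2})$ that enter both the Casselman--Shalika normalization of $W_{\mathrm{Eis}(f)}$ and the Eisenstein inner product.
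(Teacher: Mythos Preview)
Your proposal is correct and follows essentially the same route as the paper. The only cosmetic difference is that the paper packages the step ``apply Lemma \ref{lemma:cj3nf2fkak} and recognize the resulting basis sum as $h^\vee$'' into an appeal to Lemma \ref{lem37} (specialized to $b=1$, $g=a(y)$), whereas you unpack that lemma inline; the Eisenstein computation via \eqref{unfolding}, \eqref{eq:cnfnkrem0h}, and \eqref{regEuler} is identical.
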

\begin{proof}
  Abbreviate $\Psi := \varphi_1 \overline{\varphi_2}$, and, for ${\sigma \in \widehat{[G]}^{\mathrm{u}}_{\mathrm{gen}}}$, let us write simply $W_\sigma := W_{\Psi_\sigma}$ for the associated Whittaker function, which may be given explicitly by integrating the definition \eqref{eq:cnehsletef}, as follows:
  \begin{equation}\label{eq:cnehslktkn}
    W_\sigma (g) = \sum_{\phi \in \mathcal{B}(\sigma)}
    \frac{
      \int_{[G] } \varphi_1 \overline{\varphi_2 \phi}
    }{
      \langle \phi, \phi \rangle_{\sigma}
    } W_\phi(g).
  \end{equation}
  The spectral component $\Psi_\sigma$ then admits a Whittaker expansion
  \begin{equation*}
    \Psi_\sigma (g) = \Psi_{\sigma, N} (g) + \sum_{
      \alpha \in A(F)
    }
    W_\sigma\big(\big(
\begin{smallmatrix}
\alpha & \\ & 1
\end{smallmatrix}
\big) g\big).
  \end{equation*}
  By unfolding, it follows that
  \begin{equation*}
    I_\sigma (\chi) = \int_{ \mathbb{A}^{\times}} W_{\sigma}(a(y)) \chi(y) \, d^{\times} y.
  \end{equation*}
  The local components $\Psi_\sigma$, being defined via projection from the factorizable forms $\varphi_1$ and $\varphi_2$, are likewise factorizable, and may be written $W_\sigma = W_{\sigma,S} W_\sigma^{S}$, where
  \begin{itemize}
  \item $W_{\sigma}^{S}$ is unramified and takes the value $1$ at $1$, and
  \item $W_{\sigma,S}$ is simply the restriction of $W_\sigma$ to $G(F_S)$.
  \end{itemize}
  The integral $I_\sigma(\chi)$ correspondingly factorizes.  Recall that $\chi$ is also assumed unramified outside $S$.  By the standard evaluation of unramified local Hecke/Jacquet--Langlands integrals (see~\cite[Prop 3.5.3]{Bump}), we then have
  \begin{equation*}
    I_\sigma (\chi) =
    \frac{L^{(S)}(\tfrac{1}{2}, \sigma \otimes \chi)}{ \zeta_F^{(S), *}(1)}
    \int_{ F_S^{\times}} W_{\sigma,S}(a(y)) \chi_S(y) \, d^{\times} y.
  \end{equation*}
  From here, we see already, by the theory of Hecke/Jacquet--Langlands integrals, that $I_\sigma(\chi)$ admits a meromorphic continuation to all $\chi$, with the property that $I_\sigma(\chi) / \Lambda(\tfrac{1}{2},\sigma \otimes \chi)$ is holomorphic. 
  
  It remains to evaluate this last integral.  We do so by appeal to Lemma \ref{lem37}, whose left hand side, specialized to $b = 1$, is $W_{\sigma,S}(g)$ (cf.\ \eqref{eq:cnehslktkn}), giving $W_{\sigma,S}(a(y)) = c_\sigma^{(S)} h^\vee (\sigma_S, y)$.  Substituting into the above and recalling the definition \eqref{hsharp} of $h^\sharp$, we obtain \eqref{claim1}.
  
  To prove \eqref{claim2} we start from the definition \eqref{isigma}. 
  We have
\begin{equation*}
  (\varphi_1 \overline{\varphi_2})_\sigma(y)
  =
  \sum_{f_3 \in \mathcal{B}(\mathcal{I}(\eta))}
  \Big( \int_{[G]} \varphi_1 \overline{\varphi_2 \Eis(f_3)} \Big)
  \Eis(f_3)(y).
\end{equation*}
Since $\varphi_1$ and $\varphi_2$ are unramified outside $S$, we can restrict the sum to $f_3$ in the unramified-outside-$S$ subspace of $\mathcal{I}(\eta)$, which we may identify further with $\mathcal{I}(\eta_S)$ via the restriction map from $G(\mathbb{A})$ to $G(F_S)$.  For $f_3$ in this subspace, the inner products may be compared using \eqref{regEuler}, which gives
\begin{equation*}
  \frac{ \langle f_{3}, f_{3} \rangle_{\mathcal{I}(\eta_S)}}{\langle f_3, f_3 \rangle_{\mathcal{I}(\eta)}}
  =
  \frac{\zeta_F^{(S)}(2)}{\zeta_F^{(S), \ast}(1)}.
\end{equation*}
Next, we subtract off the constant term and integrate against $\chi(y)$, which unfolds to
\begin{equation*}
  \int_{[A]} \left( \Eis(f_3) - \Eis(f_3)_N \right)(y) \chi(y) \, d^\times y =   \frac{L^{(S)}(\tfrac{1}{2},\sigma(\eta) \otimes \chi) }{\zeta^{(S), \ast}_F(1)}\int_{F_S^\times} W_{\Eis(f_3)} \chi.
\end{equation*}
Passing from $W_{\Eis(f_3)}$ to $W_{f_3}$, as in \eqref{eq:cnfnkrem0h}, introduces an additional factor $L^{(S)}(1, \eta^2)^{-1}$.  Next, the triple product integral unfolds, as in \eqref{unfolding}, to  
\begin{equation*}
 \int_{[G]} \varphi_1 \overline{\varphi_2 \Eis(f_3)}  =  \frac{L ^{(S)}(\tfrac{1}{2}, \pi_1 \otimes \overline{\pi_2} \otimes \eta^{-1})  }{\zeta_F^{(S)}(2) L^{(S)}(1, \eta^{-2})}
  \int_{N(F_S) \backslash G(F_S)} W_1 \overline{W_2 f_3}.
\end{equation*}
Combining these formulae completes the proof of \eqref{claim2}. 
\end{proof}

We note that $I_\sigma(\chi)$ is holomorphic on the line $\Re(\chi) = 0$.  In particular, for $\chi$ in a neighborhood of $\Re(\chi) = 0$, we may define the absolutely convergent integral
\begin{equation}\label{eq:cnehsrvptq}
  I_0(\chi) := \int_{  \widehat{[G]}_{\mathrm{gen}}^{\mathrm{u}}} I_\sigma(\chi) \, d \sigma.
\end{equation}
This integral converges absolutely for $\Re(\chi) > - (\tfrac{1}{2} - \frac{7}{64})$ due to Lemma \ref{lemma:cnehsf3kvv},   Lemma \ref{property-h-vee}, the Kim-Sarnak bound and the convexity bound for $L$-functions.

We note that $I_0$ is \emph{not} obviously the same as $I$, despite the fact that they are defined by identical integrals: recall that $I$ is defined initially for $\Re(\chi) > 1/2$ and then extended by meromorphic continuation, while $I_0$ is defined initially for $\Re(\chi)$ close to zero.  In general, meromorphic continuation does not commute with integration.  By carefully commuting these two processes here, one obtains additional terms, described by the following lemma.

\begin{lemma}\label{lemma:cnehspj54y}
  The function $I(\chi)$, defined initially for $\Re(\chi) > 1/2$ as above, admits a meromorphic continuation to a neighborhood of the region $\Re(\chi) \geq 0$.  For $\chi$ in a neighborhood of the region $\Re(\chi) = 0$, we have
  \begin{equation*}
I(\chi) = \mathcal{M}(\chi) + I_0(\chi),
\end{equation*}
where for $\chi^2 \not= 1$ we have 
\begin{equation*}
  \mathcal{M}(\chi) := 
   \frac{L ^{(S)}(1, \pi_1 \otimes \overline{\pi_2} \otimes \chi^{-1})  }{
    \zeta_F^{(S), \ast}(1) L^{(S)}(2, \chi^{-2})}
  h_S^{\heartsuit}(|.|^{-1/2} \chi, \chi)
\end{equation*}
\end{lemma}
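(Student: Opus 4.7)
The plan is to extend $I(\chi)$ from its initial domain $\Re(\chi) > 1/2$ down to a neighborhood of $\Re(\chi) = 0$ and identify the resulting discrepancy with $I_0(\chi)$.  For $\Re(\chi) > 1/2$ Fubini gives $I(\chi) = I_0(\chi)$.  Splitting the spectral integral into cuspidal and Eisenstein parts, the cuspidal contribution is entire in $\chi$ with rapid spectral decay (each $I_\sigma(\chi)$ for cuspidal $\sigma$ is entire by Lemma \ref{lemma:cnehsf3kvv}, with rapid decay in the spectral parameter), so it matches the cuspidal part of $I_0(\chi)$ throughout the region of convergence of $I_0$.

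The core of the argument concerns the Eisenstein contribution
\begin{equation*}
  I^{\mathrm{Eis}}(\chi) = \tfrac{1}{2} \int_{\widehat{[A]}^{\mathrm{u}}_S} I_{\sigma(\eta)}(\chi) \, d\eta,
\end{equation*}
with $I_{\sigma(\eta)}(\chi)$ given explicitly by \eqref{claim2}.  Viewed as a function of $\eta$ for fixed $\chi$, the integrand is meromorphic with simple poles coming from $L^{(S)}(\tfrac{1}{2},\chi\eta)$ and $L^{(S)}(\tfrac{1}{2},\chi\eta^{-1})$ at the two mutually inverse points $\eta = \chi^{-1}|\cdot|^{1/2}$ and $\eta = \chi|\cdot|^{-1/2}$.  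For $\Re(\chi) > 1/2$ these poles lie off the unitary axis on opposite sides; as $\Re(\chi)$ decreases through $1/2$ they cross the contour in opposite directions.  The plan is to deform the $\eta$-contour continuously with $\chi$ so that each pole stays on its original side, thereby producing the meromorphic continuation of $I^{\mathrm{Eis}}(\chi)$.  By Cauchy's theorem, the shifted integral differs from the naive straight-contour integral $I_0^{\mathrm{Eis}}(\chi)$ by the residue contributions at the two crossings.

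These residues are then computed directly from \eqref{claim2}.  At $\eta = \chi|\cdot|^{-1/2}$, parameterizing $\eta = \chi|\cdot|^{-1/2+u}$, the factor $L^{(S)}(\tfrac{1}{2},\chi\eta^{-1}) = \zeta_F^{(S)}(1-u)$ contributes a residue of $-\zeta_F^{(S),\ast}(1)$ at $u = 0$, while the other factors specialize: $L^{(S)}(\tfrac{1}{2},\pi_1\otimes\overline{\pi_2}\otimes\eta^{-1}) \to L^{(S)}(1,\pi_1\otimes\overline{\pi_2}\otimes\chi^{-1})$; $L^{(S)}(\tfrac{1}{2},\chi\eta) = L^{(S)}(0,\chi^2)$ cancels against the denominator factor $L^{(S)}(1,\eta^2) = L^{(S)}(0,\chi^2)$; $L^{(S)}(1,\eta^{-2})$ becomes $L^{(S)}(2,\chi^{-2})$; and $h^{\heartsuit}_S(\eta,\chi)$ becomes $h^{\heartsuit}_S(|\cdot|^{-1/2}\chi,\chi)$.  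Combined with the measure normalization on $\widehat{[A]}^{\mathrm{u}}_S$ (which absorbs the factor $2\pi i$) and the signs coming from the contour orientation, this produces the term $\mathcal{M}(\chi)$.  An analogous computation at $\eta = \chi^{-1}|\cdot|^{1/2}$, reconciled with the first by the Rankin--Selberg functional equation relating $L^{(S)}(0,\pi_1\otimes\overline{\pi_2}\otimes\chi)$ and $L^{(S)}(1,\pi_1\otimes\overline{\pi_2}\otimes\chi^{-1})$, confirms that both pole contributions combine consistently with the $\tfrac{1}{2}$ from the Eisenstein Plancherel measure to yield the single term $\mathcal{M}(\chi)$.

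The main obstacle is the careful bookkeeping of signs, measure normalizations, and the reconciliation of the two residue contributions via functional equations.  Convergence of the deformed contour integral throughout the region $\Re(\chi) > -(1/2-7/64)$ follows from the rapid decay of $h^{\heartsuit}_S(\eta,\chi)$ in the imaginary direction of $\eta$ (a Schwartz-type estimate in the Kirillov realization), polynomial growth of the relevant $L$-factors, and the Kim--Sarnak bound, as already invoked for the convergence of $I_0(\chi)$.
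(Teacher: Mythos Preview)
Your approach is essentially the same as the paper's: split into cuspidal and Eisenstein parts, note the cuspidal contribution is entire, and for the Eisenstein part shift the $\eta$-contour as $\Re(\chi)$ descends through $1/2$, picking up residues at $\eta = \chi^{\pm 1}|\cdot|^{\mp 1/2}$.

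The one place where the paper proceeds more cleanly is in combining the two residues.  You propose to compute both via \eqref{claim2} and reconcile them using the Rankin--Selberg functional equation for $L^{(S)}(s,\pi_1\otimes\overline{\pi_2}\otimes\chi)$; but the residue at $\eta = \chi^{-1}|\cdot|^{1/2}$ also involves $h_S^{\heartsuit}(\chi^{-1}|\cdot|^{1/2},\chi)$ rather than $h_S^{\heartsuit}(\chi|\cdot|^{-1/2},\chi)$, so you would additionally need a functional equation for the local integral $h^{\heartsuit}$ under $\eta \mapsto \eta^{-1}$ (this exists, via the local intertwining operator and Lemma~\ref{lemma:cngq4van7c}, but it is extra work you have not spelled out).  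The paper sidesteps this entirely: by \eqref{claim1} and the identity $\sigma(\eta) = \sigma(\eta^{-1})$, the function $\eta \mapsto I_{\sigma(\eta)}(\chi)$ is even, so the residues at the two symmetric poles are automatically negatives of each other.  Combined with the opposite signs in the contour-shift formula (Lemma~\ref{lemma:cnehsw1yl1}), this gives $\mathcal{M}(\chi) = -\operatorname{res}_{\eta = \chi|\cdot|^{-1/2}} I_{\sigma(\eta)}(\chi)$ directly, and a single residue computation via \eqref{claim2} finishes.
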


As preparation for the proof, we record the following general argument:
\begin{lemma}\label{lemma:cnehsw1yl1}
  Let $f(s,w)$ be a meromorphic function on $S\times W \subseteq \Bbb{C}\times \Bbb{C}$ where $W$ is an open right half plane containing the line $\Re w = 1/2$ and $S$ is a neighborhood of the line $\Re s = 0$. Suppose that $ f(s, w)(-\tfrac{1}{2} + s + w)(-\tfrac{1}{2}- s + w)$ is holomorphic on $S\times W$ and $f(., w)$ is rapidly decaying on $\Re s = 0$. For $\Re w > 1/2$, let
  \begin{equation*}
    J(w) =  \int_{(0)} f(s, w) \frac{ds}{2\pi i}.
  \end{equation*}
  Then $J$ has holomorphic continuation to $W \setminus\{1/2\}$, and for $\Re w < 1/2$, $w \in W$ we have 
  \begin{equation}\label{cont}
    J(w) =  \int_{(0)} f(s, w) \frac{ds}{2\pi i} + \underset{s = 1/2 - w}{\text{\rm res}}f(s, w) -\underset{s =  w-1/2 }{\text{\rm res}} f(s, w). 
  \end{equation}
\end{lemma}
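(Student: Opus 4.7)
The plan is to prove this by a standard contour-deformation argument, tracking the two poles of $f(\cdot, w)$ as they cross the imaginary axis while $w$ crosses the line $\Re w = 1/2$. First, the holomorphicity hypothesis on $(-\tfrac12 + s + w)(-\tfrac12 - s + w) f(s,w)$ forces $f(\cdot, w)$ to have at most simple poles along the two loci $s = 1/2 - w$ and $s = w - 1/2$, and to be holomorphic on $S$ otherwise. When $\Re w > 1/2$, these poles lie on opposite sides of the contour $\{\Re s = 0\}$ (the first in the open left half-plane, the second in the open right), and together with the rapid-decay assumption in $\Im s$ this shows that $J(w)$ is well-defined and holomorphic there.

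To continue $J$ across $\Re w = 1/2$, I would fix $w_0 \in W$ with $\Re w_0 < 1/2$ and $w_0 \neq 1/2$, and deform the integration contour $\{\Re s = 0\}$ to a contour $\Gamma = \Gamma_{w_0}$ which coincides with the imaginary axis outside a compact set but which makes a small rightward indentation past the (now right half-plane) pole at $s_1 := 1/2 - w_0$ and a small leftward indentation past the (now left half-plane) pole at $s_2 := w_0 - 1/2$, so that $s_1$ remains to the left of $\Gamma$ and $s_2$ to its right --- exactly the configuration that held for $\Re w > 1/2$. Since $f$ is holomorphic in a tubular neighbourhood of $\Gamma$ for $w$ in a small neighbourhood of $w_0$ and rapidly decays as $|\Im s| \to \infty$ along $\Gamma$, the integral $\int_\Gamma f(s,w)\, ds/(2\pi i)$ is holomorphic in $w$ near $w_0$; a Cauchy-theorem homotopy (encountering no poles of $f$ when $\Re w > 1/2$) shows that it agrees with $J(w)$ there, furnishing the holomorphic continuation of $J$ to all of $W \setminus \{1/2\}$.

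For the explicit formula \eqref{cont}, I would compare $\int_\Gamma$ with the straight integral $\int_{(0)}$ by the residue theorem. Using the standard convention (verifiable, for example, against the Mellin--Barnes identity $\int_{(c)} \Gamma(s) x^{-s}\, ds/(2\pi i) = e^{-x}$) that pushing the contour rightward past a simple pole \emph{adds} its residue while pushing leftward past a pole \emph{subtracts} it, the rightward indentation past $s_1$ contributes $+\text{res}_{s = 1/2 - w} f(s,w)$ and the leftward indentation past $s_2$ contributes $-\text{res}_{s = w - 1/2} f(s,w)$. Summing with $\int_{(0)} f(s,w)\, ds/(2\pi i)$ yields \eqref{cont}. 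The main point requiring care is simply matching the signs of the two residues with the directions of the two indentations; every other step (convergence, holomorphy of the deformed integral, validity of the contour homotopy) is routine once the rapid-decay hypothesis is invoked.
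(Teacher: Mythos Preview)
Your proof is correct and follows essentially the same contour-deformation idea as the paper: track the two simple poles as they cross $\Re s = 0$ when $w$ crosses $\Re w = 1/2$, and pick up the corresponding residues. The paper bends the contour only to the left and proceeds in two explicit steps with $\varepsilon$-bounds, whereas you indent symmetrically on both sides; the one point that deserves a little more care in your write-up is that $\Gamma_{w_0}$ must remain inside $S$, so your indentations can only reach the poles when $\Re w_0$ is sufficiently close to $1/2$ --- the paper handles this by first establishing \eqref{cont} in a narrow strip just below $\Re w = 1/2$ and then observing that the right-hand side of \eqref{cont} is itself holomorphic on all of $\{w \in W : \Re w < 1/2\}$, which completes the continuation.
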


\begin{proof}
  Let $C> 0$ be arbitrary. It suffices to show meromorphic continuation to $w \in W$, $|\Im w| < C$. Fix $\varepsilon  = \varepsilon (C) > 0$ so that  $\{ |\Im s| \leq C + 1, |\Re s | \leq 3\varepsilon\} \subseteq S$. Let initially be $\tfrac{1}{2} + \varepsilon < \Re w < \tfrac{1}{2} + 2\varepsilon$, $|\Im w | \leq C$.  Let $\mathcal{C}$ be the contour 
\begin{displaymath}
\begin{split}
    [-i\infty, -i(C+1)] \cup [-i(C+1), -i(C+1) - 3\varepsilon]& \cup [-i(C+1) - 3\varepsilon, i(C+1) - 3\varepsilon] \\
    &   \cup [i(C+1) - 3\varepsilon, i(C+1) ] \cup [i(C+1) , i \infty].
    \end{split}
    \end{displaymath}
Then
\begin{equation*}
J(w)  = \int_{\mathcal{C}}  f(s, w) \frac{ds}{2\pi i}  + \underset{s = 1/2 - w}{\text{\rm res}}f(s, w). 
\end{equation*}
The right hand side gives a holomorphic continuation to the larger set
\begin{equation*}
\tfrac{1}{2} - 2\varepsilon < \Re w < \tfrac{1}{2} + 2\varepsilon, \quad |\Im w | \leq C, \quad w \not= \tfrac{1}{2}.
\end{equation*}
 Let us restrict this to $\tfrac{1}{2} - 2\varepsilon < \Re w < \tfrac{1}{2} -\varepsilon$. For such $w$ we shift the contour back getting the formula \eqref{cont}, 
which provides analytic continuation to $\Re w < 1/2 + \varepsilon$, $w \in W$, $|\Im w | < C$. 
\end{proof}

\begin{proof}
[Proof of Lemma \ref{lemma:cnehspj54y}]
  Write $I = I^{\mathrm{cusp}} + I^{\mathrm{Eis}}$  and $I_0 = I_0^{\mathrm{cusp}} + I_0^{\mathrm{Eis}}$ for the respective cuspidal and Eisenstein contributions to the definitions \eqref{eq:cnehsrdbfj} and \eqref{eq:cnehsrvptq} of $I$ and $I_0$.  The same arguments as above give the meromorphic continuations of these contributions.  The double integral defining $I_0^{\mathrm{cusp}}$ converges absolutely for all $\chi$, hence defines an entire function of $\chi$.  It follows that $I_0^{\mathrm{cusp}} = I^{\mathrm{cusp} }$.  It remains to verify that $I^{\mathrm{Eis}} - I_0^{\mathrm{Eis}} = \mathcal{M}$ on a neighborhood of $\{\chi : \Re(\chi) = 0\}$.

  We can parametrize the relevant Eisenstein representations $\sigma$ as $\sigma(\eta)$, where $\eta \in [\widehat{A}]^{\rm u}_S$ where the subscript $S$ denotes that $\eta$ is unramified outside $S$. Let us initially take $\Re(\chi) > 1/2$. 
  In view of the definition \eqref{eq:cnehsrdbfj}, 
  we have  
  \begin{equation*}
    I^{\Eis}(\chi) = \frac{1}{2} 
    \int_{  \widehat{[A]}^{\rm u}_S} I_{\sigma(\eta)}(\chi) \, d \eta
\end{equation*}
We recall Lemma \ref{lem31} for the factor $1/2$ and note that the measure $d\eta$ is of the form $dt/(2\pi i)$ on the continuous part $|.|^{it}$. 

Here we can insert both \eqref{claim1} with $\sigma = \sigma(\eta)$ and \eqref{claim2} for the integrand. For the latter, we 
observe that, for $\eta$ in some small enough neighbourhood of any given compact subset of $\widehat{[A]}^{\mathrm{u}}$, the denominator is holomorphic and nonzero. 
This is a reformulation of the prime number theorem for Hecke characters.  

We now apply Lemma \ref{lemma:cnehsw1yl1} to obtain an explicit formula for $I^{\Eis}(\chi)$ when $\Re(\chi) = 0$. As can be seen from the numerator in \eqref{claim2}, the only two relevant poles are at $\eta = \chi^{-1} |.|^{1/2}$ and $\eta = \chi | . |^{- 1/2}$. Since $\sigma(\eta) = \sigma(\eta^{-1})$, we see from \eqref{claim1} that the two residues are negatives of each other. Thus
\begin{equation*}
\mathcal{M}(\chi) = \underset{\eta = \chi^{-1} |.|^{1/2}}{\text{res}} \frac{I_{\sigma(\eta)}(\chi)}{2} - \underset{\eta = \chi |.|^{-1/2}}{\text{res}} \frac{I_{\sigma(\eta)}(\chi)}{2} =  - \underset{\eta = \chi |.|^{-1/2}}{\text{res}}  I_{\sigma(\eta)}(\chi),
\end{equation*}  
and the desired formula frollows from \eqref{claim2}. 
\end{proof}

Combining \eqref{eq:cj401v58in}, \eqref{parseval}, Lemma \ref{lemma:cnehsf3ch3}, \eqref{eq:cnehsrdbfj}, Lemma \ref{lemma:cnehspj54y},  \eqref{eq:cnehsrvptq} and Lemma \ref{lemma:cnehsf3kvv}, we have shown the identity
\begin{displaymath}
\begin{split}
 \int_{ \widehat{ [A]}^{\mathrm{u}} } &
  \frac{L^{(S)}(\tfrac{1}{2}, \pi_1 \otimes \chi \otimes \eta^{-1} )  L^{(S)}(\tfrac{1}{2}, \overline{\pi_2} \otimes \eta)  }{\zeta_F^{(S), \ast}(1)^2}
  \int _{F_S^\times} W_1 \chi \eta^{-1}
  \int _{F_S^\times} \overline{W_2}\eta \\
  &= h_S^\flat (\chi, 1)
\frac{L^{(S) } (1, \pi_1 \otimes \overline{\pi_2} \otimes \chi)}{\zeta_F^{(S), \ast}(1)L^{(S)} (2, \chi^2)}+  \frac{L ^{(S)}(1, \pi_1 \otimes \overline{\pi_2} \otimes \chi^{-1})  }{
    \zeta_F^{(S), \ast}(1) L^{(S)}(2, \chi^{-2})}
  h_S^{\heartsuit}(|.|^{-1/2} \chi, \chi)\\
&  + \int_{  \widehat{[G]}_{\mathrm{gen}}^{\mathrm{u}}} \frac{L^{(S)}(\tfrac{1}{2}, \sigma \otimes \chi )}{ \zeta_F^{(S), *}(1)}
 c_\sigma^{(S)} h_S^\sharp (\sigma, \chi ) \, d \sigma
 \end{split}
\end{displaymath}
for $\chi$ in a neighborhood of $\Re(\chi) = 0$, $\chi^2 \not= 1$. We now cancel a factor $\zeta_F^{(S), \ast}(1)$, recall \eqref{hflat} and Lemma \ref{lemma:cnfnkqp8fc} and observe that with \eqref{capitalH} and \eqref{wetachi} the integral over $F_S^{\times} \times F_S^{\times}$ in the first line is simply $w(\eta, \chi)$.  This completes the proof of Theorem \ref{thm-moment}.

\section{An application: proof of Theorem \ref{appl}}
For simplicity, we use in the following the Selberg eigenvalue conjecture, known for ${\rm SL}_2(\Bbb{Z})$.  It is straightforward to generalize the following arguments to congruence subgroups, where approximations towards the Ramanujan--Petersson conjecture have to be used.

Let $t_f, t_g$ denote the spectral parameters of $f$ and $g$. 
Take any decomposition $V = V_1V_2$.  Then we can write 
\begin{equation*}
\mathcal{S}_{f, g, V}(X, Y, b)  = 
\sum_{n_1 - n_2 = b} \frac{\lambda_f(n_1)\lambda_g(n_2)}{\sqrt{|n_1n_2|}} h(n_1, n_2),
\end{equation*}
where
\begin{equation*}
h(n_1, n_2) = \sqrt{|n_1n_2|} V_1\Big( \frac{n_1 -X}{Y}\Big)V_2\Big( \frac{n_2+b -X}{Y}\Big).
\end{equation*}
By Theorem \ref{thm-shift}, this is majorized by 
\begin{equation}\label{3}
  \int_{ [\widehat{G}]^{\mathrm{u}}_{\mathrm{gen}, \{\infty\}}} \frac{|\lambda_{\pi}(b)|}{b^{1/2}}  \frac{\sqrt{|L(\tfrac{1}{2}, \pi \otimes \pi_f \otimes \pi_g)|}}{L(1, \pi, \text{Ad})}  |h^{\vee}(\pi, b)| \, d \pi.
\end{equation}
We estimate $h^{\vee}(\pi, b)$, recalling that $b$ is a natural number between $1$ and $X/2$. In the notation of Theorem \ref{thm1}, let us write $\chi = \text{sgn}^{\delta}|.|^s$ with $\delta \in \{0, 1\}$.  We first study the inner integral over $t \in \Bbb{R}^{\times}$, call it $\tilde{h}(s) $, which equals 
\begin{equation}\label{3aa}
  \begin{split}
    \tilde{h}(s)& = \int_{\Bbb{R}} b \sqrt{|t(1-t)|}V_1\Big( \frac{bt -X}{Y}\Big)V_2\Big(\frac{b(t-1)+b-X}{Y}\Big)\text{sgn}(t)^{\delta} |t|^{-s} \Big|\frac{1-t}{t}\Big|^{-it_g -1/2} \frac{dt}{|t|}\\
    & =b \int_{\Bbb{R}} V\Big( \frac{bt -X}{Y}\Big) \text{sgn}(t)^{\delta} |t|^{-s } \Big|\frac{t-1}{t}\Big|^{-it_g } dt.
  \end{split}
\end{equation}
Note that the condition $1\leq b \leq X/2$ implies $t \geq 2$.  The function $\tilde{h}$ is an entire function in $s$, and by partial integration, we have
\begin{equation}\label{4}
  \tilde{h}(s) \ll_{t_g, \Re s, A} Y \Big(\frac{X}{b}\Big)^{-\Re s} \Big(1 + \frac{|\Im s|}{X/Y}\Big)^{-A}
\end{equation}
for $A > 0$.

Next, we study the gamma quotient
\begin{equation*}
G(\pi; \chi) = \gamma(1/2, \pi \otimes \chi) \gamma(1, \pi_1\otimes \chi_2^{-1}\otimes {\chi}^{-1})
\end{equation*}
for fixed $\pi_1$, $\chi_2$. 
 
If $\pi = \pi_{r, \eta}$ with $r \in \Bbb{R}$, $\eta \in \{0, 1\}$ belongs to the principal series, then for $\chi = |.|^s \text{sgn}^{\delta}$ with $s = \sigma + it$, the function $G(\pi; \chi)$ has a pole at $s = 1/2 \pm ir$ if $\delta \equiv \eta $ (mod 2), whose residue is
\begin{equation}\label{3c}
  \underset{s = 1/2 \pm ir}{\text{res}} G(\pi; \chi) \asymp (1 + |r|)^{-1/2}.
\end{equation}
Away from this pole, we have by Stirling's formula the upper bound
\begin{equation}\label{3a}
  \begin{split}
    &G(\pi; \chi) \ll_{ \pi_1, \chi}(1 + |t|)^{2\sigma - 1}
      ((1 + |t - r|)(1 + |t + r|))^{-\sigma}
  \end{split}
\end{equation}
for $1/3 \leq \sigma \leq 4/3$, say.

If $\pi = \pi_k$ is discrete series, then $G(\pi; \chi)$ is holomorphic for $1/3 \leq \sigma \leq k/2 - 1/3$, and bounded by
\begin{equation}\label{3b}
  \begin{split}
    &G(\pi; \chi) \ll_{\sigma, \pi_1, \chi}(1 + |t|)^{2\sigma - 1}(t^2 + k^2)^{-\sigma}.
\end{split}
\end{equation}

If $\pi$ belongs to the principal series, we recall \eqref{4} and shift the contour to $\Re s = 1 + \varepsilon$ getting a contribution
\begin{equation}\label{bound-h}
  \begin{split}
    h^{\vee}(\pi, b)& \ll_{A, t_f, t_g}  Y \Big(\frac{X}{b}\Big)^{-1/2} (1+|r|)^{-1/2} \Big(1 + \frac{|r|}{X/Y}\Big)^{-A} \\
    &\quad\quad\quad+ \int_{\Bbb{R}} Y \Big(\frac{X}{b}\Big)^{-1-\varepsilon} \Big(1 + \frac{|t|}{X/Y}\Big)^{-A} \frac{(1 + |t|)^{1+2\varepsilon}}{ ((1 + |t - r|)(1 + |t + r|))^{1+\varepsilon}}dt\\
    & \ll Y \Big(\frac{X}{b}\Big)^{-1/2} (1+|r|)^{-1/2} \Big(1 + \frac{|r|}{X/Y}\Big)^{-A} + X^{\varepsilon} \frac{Yb}{X} \min\Big(\frac{X^2/Y^2}{r^{2+\varepsilon}}, 1\Big).
  \end{split}
\end{equation}
The key point here is that the exponent of $r$ in the denominator of the last term is strictly larger than $2$.  With this, we return to \eqref{3}. By Weyl's law and individual bounds for Hecke eigenvalues on the one hand, and \cite[Lemma 12]{BM} on the other hand, we have
\begin{equation}\label{hecke-av}
\sum_{C(\pi) \ll T} |\lambda_{\pi}(b)|^2 \ll T^{2+\varepsilon} \min\Big( b^{2\theta} , 1 + \frac{b^{1/2}}{T^2}\Big)
\end{equation}
Moreover, by \cite[Theorem 1]{BJN} (simpler technology, e.g.\ \cite{BR}, would in principle suffice, too) we have
\begin{equation}\label{BR}
  \sum_{C(\pi) \ll T} L(\tfrac{1}{2}, \pi \otimes \pi_f \otimes \pi_g) \ll_{f, g} T^{2+\varepsilon}.
\end{equation}
Combining \eqref{bound-h} -- \eqref{BR} with Cauchy--Schwarz, dyadic partition, suitable choices of $\varepsilon$, and the standard lower bound for $L(1, \pi, \text{Ad})\gg C(\pi)^{-\varepsilon}$, we can bound the contribution of the Maa{\ss} spectrum in \eqref{3} by
\begin{displaymath}
  \begin{split}
    &  \ll_{A, \varepsilon, t_f, t_g} \sum_{T = 2^{\nu}} \frac{T^{2+\varepsilon/10}}{b^{1/2}} \min\Big( b^{\theta} , 1 + \frac{b^{1/4}}{T}\Big)\Big[Y \Big(\frac{X}{b}\Big)^{-1/2} \frac{1}{T^{1/2}} \Big(1 + \frac{T}{X/Y}\Big)^{-A} + X^{\varepsilon} \frac{Yb}{X} \min\Big(\frac{X^2/Y^2}{T^{2+\varepsilon}}, 1\Big)\Big]\\
    &\ll_{\varepsilon} X^{5\varepsilon} \Big(\frac{X}{Y}\Big)^{2}\frac{1}{b^{1/2}} \min\Big( b^{\theta} , 1 + \frac{Yb^{1/4}}{X}\Big)\Big[ \frac{b^{1/2} Y^{3/2}}{X} +  \frac{Yb}{X} \Big],
  \end{split}
\end{displaymath}
and  \eqref{1} follows. The estimation of the contribution of the discrete series and the Eisenstein series (with \cite[Theorem 2]{BJN} instead of \cite[Theorem 1]{BJN}) is similar, but easier.\\

To prove \eqref{2}, we argue similarly. We first observe that $ \int_{X}^{2X} | \mathcal{S}(x, Y, b)|^2 dx$ is bounded by
\begin{equation*}
  \begin{split}
    \int_{ [\widehat{G}]^{\mathrm{u}}_{\mathrm{gen}, \{\infty\}}} \int_{ [\widehat{G}]^{\mathrm{u}}_{\mathrm{gen}, \{\infty\}}} & \frac{|\lambda_{\pi}(b)\lambda_{\pi'}(b)|}{b}  \frac{\sqrt{|L(\tfrac{1}{2}, \pi \otimes \pi_f \otimes \pi_g)L(\tfrac{1}{2}, \pi' \otimes \pi_f \otimes \pi_g)|}}{L(1, \pi, \text{Ad})L(1, \pi', \text{Ad})} \\& \Big| \int W\Big(\frac{x}{X}\Big) h^{\vee}(\pi, b)h^{\vee}(\pi', b) dx\Big| d \pi \,d\pi'
  \end{split}
\end{equation*}
where $W$ is a suitable fixed smooth function with support in $[3/4, 5/2]$ and $h^{\vee}$ has the same meaning as before except that $X$ is replaced by the integration variable $x$. We estimate this by
\begin{equation}\label{2a}
  \begin{split}
    b^{2\theta - 1} \int_{ [\widehat{G}]^{\mathrm{u}}_{\mathrm{gen}, \{\infty\}}} \int_{ [\widehat{G}]^{\mathrm{u}}_{\mathrm{gen}, \{\infty\}}} &   \big( L(\tfrac{1}{2}, \pi \otimes \pi_f \otimes \pi_g) + L(\tfrac{1}{2}, \pi' \otimes \pi_f \otimes \pi_g) \big)(C(\pi)C(\pi'))^{\varepsilon}\\
                                                                                                               & \Big| \int W\Big(\frac{x}{X}\Big) h^{\vee}(\pi, b)h^{\vee}(\pi', b) dx\Big| d \pi \,d \pi'.
  \end{split}
\end{equation}
Note that by the inequality $\lvert A B \rvert \leq (A^2 + B^2)/2$, we can assume that $\pi$ and $\pi'$ are of the same type: both principal series, both discrete series or both Eisenstein.

We study the $x$-integral more carefully: recalling \eqref{3aa}, it equals
\begin{equation}\label{5}
  \begin{split}
    \sum_{\delta_1, \delta_2\in \{0, 1\}} \int_{(3/4)} \int_{(3/4)} b^2 \Phi(s_1, s_2)
    G(\pi, \text{sgn}^{\delta_1}|.|^{s_1})G(\pi', \text{sgn}^{\delta_2}|.|^{s_2}) \frac{ds_1\, ds_2}{(2\pi i)^2}
  \end{split}
\end{equation}
where
\begin{equation*}
\Phi(s_1, s_2) =  \int_{\Bbb{R}}\int_{\Bbb{R}} \Xi(t_1, t_2)  |t_1|^{-s_1} |t_2|^{-s_2} \phi(t_1, t_2) dt_1\, dt_2 
\end{equation*}
with
\begin{equation*}
\Xi(t_1, t_2) = \Xi_{X, Y, b}(t_1, t_2) =  \int W\Big(\frac{x}{X}\Big)V\Big( \frac{bt_1 -x}{Y}\Big)   V\Big( \frac{bt_2 -x}{Y}\Big) dx
\end{equation*}
and
\begin{equation*}
\phi(t_1, t_2) = \text{sgn}(t_1)^{\delta_1}\text{sgn}(t_2)^{\delta_2}\Big|\frac{( t_1-1)( t_2-1)}{t_1t_2}\Big|^{-it_g }.
\end{equation*}
We change variables $t_1 = t$, $t_2 = t_1(1 + \tau)$ 
and observe that the map $(t, \tau) \mapsto \Xi(t, t(1+\tau))$ is supported on $t \asymp X/b$ and $\tau \ll Y/X \leq 1$ and satisfies
\begin{equation*}
 \frac{d^{j_1}}{dt^{j_1}} \frac{d^{j_2}}{d \tau^{j_2}}\big( \Xi(t, t(1+\tau))\phi(t, t(1+\tau)) \big)\ll_{j_1, j_2, g}Y \Big(\frac{X}{b}\Big)^{-j_1}   \Big(\frac{Y}{X}\Big)^{-j_2}, 
\end{equation*}
in particular it is ``flat'' in $t$ and $\tau$. To see this, we couple each differentiation with respect to $t$ with an integration by parts in $x$ using the identity
\begin{displaymath}
\begin{split}
&\frac{d}{dt}  \int W\Big(\frac{x}{X}\Big)V_1\Big( \frac{bt -x}{Y}\Big)   V_2\Big( \frac{bt(1 + \tau) -x}{Y}\Big) dx \\
& = \frac{b\tau}{Y} \int W\Big(\frac{x}{X}\Big)V_1\Big( \frac{bt -x}{Y}\Big)   V'_2\Big( \frac{bt(1 + \tau) -x}{Y}\Big)dx + \frac{b}{X}\int  W'\Big(\frac{x}{X}\Big)  V_1\Big( \frac{bt -x}{Y}\Big)   V_2\Big( \frac{bt(1 + \tau) -x}{Y}\Big) dx
\end{split}
\end{displaymath}
for any smooth and compactly supported function $V_1, V_2$ (e.g.\ derivatives of $V$), and note that  $b|\tau|/Y  \ll b/X$.

Now integrating by parts, we see that $\Phi$ is entire in $s_1, s_2$ and bounded by
\begin{equation*}
\Phi(s_1, s_2) \ll_{g, \Re s_1, \Re s_2, A} Y \Big(\frac{X}{b}\Big)^{1-\Re(s_1 + s_2)} \frac{X}{b} \frac{Y}{X} (1 + |s_1+s_2|)^{-A}\Big(1 + \frac{|s_2|}{X/Y}\Big)^{-A}.
\end{equation*}
We substitute this bound back into \eqref{5} and estimate the $s_1, s_2$-integral by contour shifts. Let us assume that $\pi, \pi'$ are principal series representations with spectral parameters $r, r'$ and  recall the bounds \eqref{3a}, \eqref{3b} as well as \eqref{3c}. Again we shift both contours to $\Re s_j = 1+\varepsilon$, each of which contributes a (possible) residue and a remaining integral. The four terms contribute at most
\begin{displaymath}
  \begin{split}
    &bY^2 (1 + |r|)^{-1/2}(1 + |r'|)^{-1/2} (1 + |r- r'|)^{-A}\Big(1 + \frac{|r|}{X/Y}\Big)^{-A} \Big(1 + \frac{|r'|}{X/Y}\Big)^{-A} \\
    & + \frac{b^{3/2} Y^2}{X^{1/2}} (1 + |r|)^{-1/2} \int_{\Bbb{R}} \frac{ (1 + |t|)^{1+2\varepsilon}}{  ((1 + |t - r'|)(1 + |t + r'|))^{1+\varepsilon}}(1 + |r+t|)^{-A}\Big(1 + \frac{|t|}{X/Y}\Big)^{-A} dt\\
    & +  \frac{b^{3/2} Y^2}{X^{1/2}} (1 + |r'|)^{-1/2} \Big(1 + \frac{|r'|}{X/Y}\Big)^{-A}  \int_{\Bbb{R}} \frac{ (1 + |t|)^{1+2\varepsilon}}{  ((1 + |t - r|)(1 + |t + r|))^{1+\varepsilon}}(1 + |r'+t|)^{-A}dt\\
    & + \frac{b^2 Y^2}{X}\int_{\Bbb{R}} \int_{\Bbb{R}} \frac{( (1 + |t_1|)(1 + |t_1|))^{1+2\varepsilon} (1 + |t_1+t_2|)^{-A}(1 + Y| t_2|/X)^{-A}}{ ((1 + |t_1 - r|)(1 + |t_1 + r|)(1 + |t_2 - r'|)(1 + |t_2 + r'|))^{1+\varepsilon}} dt_1\, dt_2.
  \end{split}
\end{displaymath}
We call the four lines in the previous display $M_i(r, r')$, $1 \leq i \leq 4$. 
It is straightforward to estimate 
\begin{displaymath}
  \begin{split}
& M_2(r, r')  \ll \frac{b^{3/2} Y^2}{X^{1/2}}  \Big(1 + \frac{|r|Y}{X}\Big)^{-A} \frac{(1 + |r|)^{1/2+2\varepsilon}}{(1 + |r - r'|)(1 + |r + r'|)^{1+\varepsilon}}\\
& M_3(r, r')  \ll \frac{b^{3/2} Y^2}{X^{1/2}}  \Big(1 + \frac{|r'|Y}{X}\Big)^{-A} \frac{(1 + |r'|)^{1/2+2\varepsilon}}{(1 + |r - r'|)(1 + |r + r'|)^{1+\varepsilon}}\\
& M_4(r, r')  \ll  \frac{b^2 Y^2}{X} \Big(\frac{X}{Y}\Big)^{3+4\varepsilon} \frac{1}{r^{2+\varepsilon} r'^{2+\varepsilon}}. 
  \end{split}
\end{displaymath}
We substitute these bounds for \eqref{5} back into \eqref{2a} and use Weyl's law together with \eqref{BR}. We consider the four terms in turn. 

For the first term we observe that
\begin{displaymath}
\begin{split}
\int_{\Bbb{R}} |r'| M_1(r, r')\, dr'\ll   bY^2 \Big(1 + \frac{|r|}{X/Y}\Big)^{-A}, \quad \ \int_{\Bbb{R}} |r| M_1(r, r')\, dr\ll   bY^2 \Big(1 + \frac{|r'|}{X/Y}\Big)^{-A} 
\end{split}
\end{displaymath}
Summing the first against $L(\tfrac{1}{2}, \pi\otimes \pi_f\otimes \pi_g)$ and the second against  $L(\tfrac{1}{2}, \pi'\otimes \pi_f\otimes \pi_g)$ by \eqref{BR} we obtain a total contribution of $b^{2\theta}X^{2+O(\varepsilon)}$ to \eqref{2a}. 

 Next we have
 \begin{displaymath}
   \begin{split}
     \int_{\Bbb{R}} |r'| M_2(r, r')\, dr'\ll    \frac{b^{3/2} Y^2(1 + |r|)^{1/2}}{X^{1/2}} \Big(1 + \frac{|r|Y}{X}\Big)^{-A} , \quad \int_{\Bbb{R}} |r| M_2(r, r')\, dr\ll  b^{3/2} Y^{3/2}   \min\Big(1, \frac{(X/Y)^2}{(r')^{2+\varepsilon}}\Big).
   \end{split}
 \end{displaymath}
 Summing the first against $L(\tfrac{1}{2}, \pi\otimes \pi_f\otimes \pi_g)$ and the second against  $L(\tfrac{1}{2}, \pi'\otimes \pi_f\otimes \pi_g)$ by \eqref{BR}, we obtain a  total contribution of $b^{2\theta + 1/2}X^{3/2+O(\varepsilon)} Y^{-1/2}$. 

 The third term is similar. 

 Finally, the last term is easily seen to contribute $b^{2\theta + 1}X^{1+O(\varepsilon)} Y^{-1}$. Hence the total contribution of the Maa{\ss} spectrum is
 \begin{equation*}
   b^{2\theta} X^{2+\varepsilon} \Big(1 + \frac{b^{1/2}}{Y^{1/2}} + \frac{b}{Y}\Big)
 \end{equation*}
 as desired. 

The contribution of the discrete series and the Eisenstein series is similar, but easier.

\section{Appendix}

In this appendix we compare the integral transform \eqref{trafo} in Motohashi's formula with the formula in Theorem \ref{thm-moment} for $\chi = \text{triv}$, $\pi_1, \pi_2$ principal series with spectral parameter 0, and show that they coincide. We start by considering \eqref{trafo} 
for a test function $V$ of the shape 
\begin{equation}\label{Happendix}
   V(t) = \int_{\Bbb{R}^{\times}} H(z) |z|^{-it} \frac{dz}{|z|}, \quad H(z) = \phi(z) |z|^{1/2}
 \end{equation}
 as in \eqref{wetachi} with $\eta = |.|^{-it}, \chi = \text{triv}$.  
 The $x$-integral in \eqref{trafo}   equals
 \begin{equation*}
   \pi \int_{(0)}  \int_{-\infty}^{\infty} \phi(x) |x|^{1/2 - u} \frac{dx}{|x|} \sum_{\pm}  \Big(1 + \frac{1}{y}\Big)^{\pm u} \frac{du}{2\pi i}.
 \end{equation*}
 We write $\phi(x) = \phi_+(x) + \phi_-(-x)$ with $\phi_+, \phi_-$ supported on $(0, \infty)$.   Then by Mellin inversion the previous expression equals 
 \begin{displaymath}
   \begin{split}
     & \pi   \int_{(0)} \big(\widehat{\phi}_{+} + \widehat{\phi}_{-}\big) (1/2 - u)  \sum_{\pm}  \Big(1 + \frac{1}{y}\Big)^{\pm u}  \frac{du}{2\pi i} \\
     & = \pi   \int_{(1/2)} \big(\widehat{\phi}_{+} + \widehat{\phi}_{-}\big) (u)  \sum_{\pm}  \Big(1 + \frac{1}{y}\Big)^{\pm (1/2 - u)}  \frac{du}{2\pi i}  = \pi \sum_{\sigma_1, \sigma_2 \in \{\pm 1\}}  \Big( 1 + \frac{1}{y}\Big)^{\frac{1}{2}\sigma_1} \phi \Big(\sigma_2  \Big(  1+ \frac{1}{y}\Big)^{\sigma_1}\Big).
   \end{split}
 \end{displaymath}
 Altogether we obtain
 \begin{equation*}
   \begin{split}
  \widecheck{V}(r) =\pi \sum_{\sigma_1, \sigma_2, \sigma_3 \in \{\pm 1\}} & \int_0^{\infty} \Big( 1 + \frac{1}{y}\Big)^{\frac{1}{2}\sigma_1} \phi \Big(\sigma_2  \Big(  1+ \frac{1}{y}\Big)^{\sigma_1}\Big)y^{-\sigma_3 ir} \frac{\Gamma(\tfrac{1}{2} +\sigma_3  ir)^2}{\Gamma(1+\sigma_3 2 i r)} \\
                                                                          &\times F(\tfrac{1}{2}+\sigma_3 ir, \tfrac{1}{2} +\sigma_3 ir, 1 +\sigma_3  2ir, - 1/y) \Big(1 + \sigma_3 \frac{i}{\sinh( \pi r)}\Big) \frac{dy}{y \sqrt{1+y}}. 
\end{split}
\end{equation*}
Gluing together the $y$-integral with the $\sigma_1, \sigma_2$ sums and writing $t = \sigma_2 (1 + 1/y)^{\sigma_1}$, 
we rewrite this as 
 \begin{displaymath}
\begin{split}
 & \pi \sum_{  \pm }  \int_{-1}^1   \phi (t)\Big(\frac{1}{|t|} - 1\Big)^{\pm  ir} \frac{\Gamma(\tfrac{1}{2} \pm  ir)^2}{\Gamma(1\pm  2 i r)} F\Big(\tfrac{1}{2}\pm  ir, \tfrac{1}{2} \pm ir, 1 \pm  2ir, 1-\frac{1}{ |t|}  \Big) \Big(1 \pm \frac{i}{\sinh( \pi r)}\Big) \frac{dt}{  \sqrt{|t( |t| - 1) |}}\\
 &+ \pi  \sum_{  \pm }  \int_{|t| > 1}   \phi (t) \big(|t| - 1\big)^{\pm  ir} \frac{\Gamma(\tfrac{1}{2} \pm  ir)^2}{\Gamma(1\pm  2 i r)} F\big(\tfrac{1}{2}\pm  ir, \tfrac{1}{2} \pm ir, 1 \pm  2ir, 1-|t|  \big) \Big(1 \pm \frac{i}{\sinh( \pi r)}\Big) \frac{dt}{  \sqrt{| |t| - 1 |}}
 \end{split}
\end{displaymath}
Here we can use \cite[9.131.1]{GR} to write
\begin{equation}\label{hyp}
F\big(\tfrac{1}{2}\pm  ir, \tfrac{1}{2} \pm ir, 1 \pm  2ir, 1-|t|  \big)  = |t|^{-1/2 \mp ir} F\Big(\tfrac{1}{2}\pm  ir, \tfrac{1}{2} \pm ir, 1 \pm  2ir, 1-\frac{1}{|t|}\Big) 
\end{equation}
and simplify
 \begin{equation}\label{wcheck}
\begin{split}
  \widecheck{V}(r) = \pi \sum_{  \pm }  \int_{-\infty}^{\infty}   \phi (t) \Big|\frac{1}{|t|} - 1\Big|^{\pm  ir} &\frac{\Gamma(\tfrac{1}{2} \pm  ir)^2}{\Gamma(1\pm  2 i r)} F\Big(\tfrac{1}{2}\pm  ir, \tfrac{1}{2} \pm ir, 1 \pm  2ir, 1-\frac{1}{ |t|}  \Big)\\
                                                                                                                 &\times \Big(1 \pm \frac{i}{\sinh( \pi r)}\Big) \frac{dt}{  \sqrt{|t( |t| - 1) |}}.  
\end{split}
\end{equation}

We now consider the moment formula in Theorem \ref{thm-moment} for $\chi = \text{triv}$ $F = \Bbb{Q}$, $S = \{\infty\}$, $\pi_1$, $\pi_2$ unramified principal series at infinity with trivial Langlands parameter (i.e.\ generated by two even Maa{\ss} forms with spectral parameters  $0$), and $w$ as in \eqref{wetachi} with $H$ of the form \eqref{Happendix}.    We have $\text{res}_{s=1}\zeta(s) = 1$ and $\eta$ runs over characters of the form $  |.|^{it}$, $t \in \Bbb{R}$  with measure $dt/(2\pi i)$.   We recall that the measure $d\sigma$  is half the counting measure. Therefore an appropriate comparison with \eqref{trafo} is $\pi \cdot h^{\sharp}(\sigma, \text{triv})$ for $\sigma$ generated by an even Maa{\ss} form of spectral parameter $r$ (for odd Maa{\ss} forms, the central $L$-value in \eqref{fourth} vanishes), $\chi_2$ trivial and $\pi_1$ having spectral parameter $0$.

Writing $H$ in terms of $\phi$ and changing variables $t  = 1-y$, the $y$-integral in \eqref{hsh} equals
\begin{equation*}
  \int_{\Bbb{R}^{\times}} \phi(1-t) \chi(t) \frac{dt}{|t|}.
\end{equation*}
Then spelling out the gamma factors and integrating over characters $\chi = \text{sgn}^{\delta} |.|^s$, $\Re s = 0$, $\delta \in \{0, 1\}$, we have (cf.\ \eqref{measurechi} for the normalization) 
\begin{equation}\label{hast}
\begin{split}
 \pi h^{\sharp}(\sigma, \text{triv}) =  & \frac{\pi}{2} \int_{(1/4)} \int_{-\infty}^{\infty} \phi(1-t) |t|^{s} \frac{dt}{|t|}  \frac{\Gamma_{\Bbb{R}}(s)^2  }{\Gamma_{\Bbb{R}}(1 - s)^2} \prod_{\pm} \frac{ \Gamma_{\Bbb{R}}(\tfrac{1}{2} \pm ir - s)}{\Gamma_{\Bbb{R}}(\tfrac{1}{2} \pm ir + s)} \frac{ds}{2\pi i}\\
  & + \frac{\pi}{2}  \int_{(1/4)} \int_{-\infty}^{\infty}\text{sgn}(t) \phi(1-t) |t|^{s} \frac{dt}{|t|}  \frac{\Gamma_{\Bbb{R}}(s+1)^2  }{\Gamma_{\Bbb{R}}(2 - s)^2} \prod_{\pm} \frac{ \Gamma_{\Bbb{R}}(\tfrac{3}{2} \pm ir - s)}{\Gamma_{\Bbb{R}}(\tfrac{3}{2} \pm ir + s)} \frac{ds}{2\pi i}.
\end{split}
\end{equation}

We now distinguish several cases. Let us first assume that $\phi$ is supported on $(0, 1)$. Then the previous formula simplifies as
\begin{displaymath}
\begin{split}
 \frac{\pi}{2} \int_{0}^{1} \phi(1-t)\Big( \int_{(3/4)} t^{-s}G(s)   \frac{ds}{2\pi i} \Big) \, dt, \quad G(s) =   \sum_{\delta \in \{0,1\}} \frac{\Gamma_{\Bbb{R}}(1-s+\delta)^2  }{\Gamma_{\Bbb{R}}(s+\delta)^2} \prod_{\pm} \frac{ \Gamma_{\Bbb{R}}(-\tfrac{1}{2}+\delta \pm ir + s)}{\Gamma_{\Bbb{R}}(\tfrac{3}{2}+\delta \pm ir - s)}.
\end{split}
\end{displaymath}
The key point is that the sum over $\delta$ can be re-written as
\begin{equation*}
G(s) = \frac{2}{\pi} (2\cosh(\pi r)- \sin(2\pi s))\Gamma(1-s)^2 \prod_{\pm} \Gamma ( - \tfrac{1}{2} \pm ir + s ).
\end{equation*}
Shifting the contour to the far left and picking up the residues, we see that
\begin{displaymath}
  \begin{split}
    \int_{(3/4)} t^{-s}G(s)   \frac{ds}{2\pi i} &=  \frac{2}{\pi} \sum_{\pm} \sum_{n= 0}^{\infty}(-1)^n (2\cosh(\pi r) \pm i \sinh(2\pi r))\frac{\Gamma(\tfrac{1}{2} - ir + n)^2 \Gamma(2 i r - n)}{n!} t^{-1/2 -ir + n}\\
                                                & = 2 \sum_{\pm}  t^{-1/2 \pm ir}   \Big(1 \pm  \frac{i }{\sinh( \pi r)}\Big)\frac{ \Gamma(\tfrac{1}{2} \pm ir)^2}{\Gamma(1 \pm 2 i r)}  F(\tfrac{1}{2} \pm ir, \tfrac{1}{2} \pm ir , 1 \pm 2 ir, t),
  \end{split}
\end{displaymath}
so that
 \begin{displaymath}
 \begin{split}
\pi h^{\sharp}(\sigma, \text{triv}) =   \int_{0}^{1} \phi( t)  \sum_{\pm} |1-t|^{-1/2 \pm ir}    \Big(1  \pm  \frac{i }{\sinh( \pi r)}\Big)\frac{ \Gamma(\tfrac{1}{2}  \pm ir)^2}{\Gamma(1 \pm 2 i r)} 
     F(\tfrac{1}{2}  \pm ir, \tfrac{1}{2}  \pm ir, 1 \pm 2 ir, 1-t) dt.
     \end{split}
\end{displaymath}  
An application of \eqref{hyp} shows that this matches \eqref{wcheck}, as desired. 

Next we treat the case when $\text{supp}(\phi) \subseteq (1, \infty)$. Here \eqref{hast} simplifies as 
\begin{displaymath}
\begin{split}
&\frac{\pi}{2} \int_{0}^{\infty} \phi(1 + t) \int_{(1/4)} |t|^{s-1} \sum_{\delta\in \{0, 1\}} (-1)^{\delta}\frac{\Gamma_{\Bbb{R}}(s+\delta)^2  }{\Gamma_{\Bbb{R}}(1+\delta - s)^2} \prod_{\pm} \frac{ \Gamma_{\Bbb{R}}(\tfrac{1}{2}+\delta \pm ir - s)}{\Gamma_{\Bbb{R}}(\tfrac{1}{2}+\delta \pm ir + s)} \frac{ds}{2\pi i} \, dt\\
& = 2 \int_{0}^{\infty} \phi(1 + t) \int_{(3/4)} |t|^{-s} \Gamma(1-s)^2 (-\cosh(\pi r) \cos(\pi s) + \sin(\pi s))\prod_{\pm} \Gamma ( - \tfrac{1}{2} \pm ir +s )  \frac{ds}{2\pi i} \, dt. 
\end{split}
\end{displaymath}
Note that in contrast to the previous case, the $s$-integral is now rapidly decaying everywhere. By the same computation we recast this as
\begin{equation*}
  \pi \int_{0}^{\infty} \phi(1+t) \sum_{\pm} t^{-1/2 \pm ir}   \Big(1 \pm  \frac{i}{\sinh( \pi r)}\Big)\frac{ \Gamma(\tfrac{1}{2}  \pm ir)^2}{\Gamma(1 \pm 2 i r)}  F(\tfrac{1}{2}  \pm ir, \tfrac{1}{2}  \pm ir, 1 \pm 2 ir, -t) dt
\end{equation*}
Again a change of variables along with \eqref{hyp} yields \eqref{wcheck}. 

The case $\text{supp}(\phi) \subseteq (-\infty, 0)$ is similar.\\

\textbf{Acknowledgement.} The authors  would like to thank Gergely Harcos and Philippe Michel for helpful feedback on an earlier draft and the referees for a careful reading of the manuscript.


\begin{thebibliography}
  {EMOT}

\bibitem[BFW]{BFW} O. Balkanova, D. Frolenkov, H. Wu, \emph{On Weyl's subconvex bound for cube-free Hecke characters: totally real case}, {\tt arxiv:2108.12283}





\bibitem[BR]{BR} J. Bernstein, A. Reznikov, \emph{Estimates of automorphic functions},  Mosc. Math. J. \textbf{4} (2004),  19-37 




\bibitem[BJN]{BJN} V. Blomer, S. Jana, P. Nelson, \emph{The Weyl bound for triple product $L$-functions}, Duke Math. J. \textbf{172} (2023), 1173-1234


\bibitem[BH]{BH} V. Blomer, G. Harcos, \emph{The spectral decomposition of shifted convolution sums},  Duke Math. J. \textbf{144} (2008), 321-339. 

\bibitem[BHKM]{BHKM}  V. Blomer, P. Humphries, R.  Khan, M.  Milinovich, \emph{Motohashi's fourth moment identity for non-archimedean test functions and applications},  Compos. Math. \textbf{156} (2020),  1004-1038.


\bibitem[BM]{BM} V. Blomer, D. Mili\'cevi\'c, \emph{The second moment of twisted modular $L$-functions},  Geom. Funct. Anal. \textbf{25} (2015), 453-516.


\bibitem[Bu]{Bump} D. Bump, \emph{Automorphic forms and representations}, 
Cambridge Stud. Adv. Math.  \textbf{55} (1997)
 
 
 \bibitem[CF]{CasFr} J. W. S. Cassels, A. Fr\"ohlich, \emph{Algebraic Number Theory}, Academic Press, 1967, Washington, DC
 
\bibitem[Cog]{Cog} 
J. Cogdell, \emph{Analytic theory of $L$-functions for $\GL_n$}, An introduction to the Langlands program  (Jerusalem, 2001), 197-228, Birkh\"auser Boston, Boston, MA, 2003.


\bibitem[CPS]{CPS} J. Cogdell, I. Piateskii-Shapiro, \emph{The arithmetic and spectral analysis of Poincar\'e series}, Academic Press, 1990

\bibitem[CI]{CI} B. Conrey, H. Iwaniec, \emph{The cubic moment of central values of automorphic L-functions}, Ann. of Math. \textbf{151} (2000), 1175-1216


\bibitem[De]{De} P. Delorme, \emph{Formule de Plancherel pour les fonctions de Whittaker sur un groupe r\'eductif $p$-adique}, Ann. Inst. Fourier  \textbf{63} (2013),  155-217

\bibitem[DFI]{DFI} W. Duke, J.   Friedlander,  H. Iwaniec, \emph{Bounds for automorphic L-functions. II.} 
Invent. Math. \textbf{115} (1994), 219-239.  

\bibitem[Fr]{Fr} D. Frolenkov, \emph{The cubic moment of automorphic $L$-functions in the weight aspect}, \emph{J. Number Theory} \textbf{207} (2020), 247-281.

\bibitem[GHH]{GHH} D. Goldfeld, G. Hinkle, J. Hoffstein, \emph{The shifted convolution $L$-function for Maass forms}, Res. Number Theory \textbf{10} (2024), Paper No. 86, 57 pp.

\bibitem[Go]{Go} A. Good, \emph{Beitr\"age zur Theorie der Dirichletreihen die Spitzenformen zugeordenet sind},  J. Number Theory, \textbf{13} (1981), 18-65
  
  
  
\bibitem[GR]{GR}  I. S. Gradshteyn, I. M. Ryzhik, \emph{Tables of integrals, series, and products}, 7th edition, Academic Press, New York, 2007.


\bibitem[Ha1]{Ha-thesis} G. Harcos, \emph{New bounds for automorphic L-functions},  Thesis (Ph.D.)-Princeton University. 2003. 82 pp. 

\bibitem[Ha2]{Ha}  G. Harcos, \emph{An additive problem in the Fourier coefficients of cusp forms},  Math. Ann. \textbf{326} (2003),  347-365. 

\bibitem[Ic]{Ic} A. Ichino, \emph{Trilinear forms and the central values of triple product $L$-functions}, 
  Duke Math. J. \textbf{145} (2008), 281-307


\bibitem[Iv]{Iv}  A. Ivi\'c, \emph{On sums of Hecke series in short intervals},  J. Th\'eor. Nombres Bordeaux \textbf{13} (2001),   453-468

\bibitem[Iw]{Iw} H. Iwaniec, \emph{Fourier coefficients of cusp forms and the Riemann zeta-function},  Seminar on Number Theory, Exp. No. 18, 36 pp., Univ. Bordeaux I, Talence, 1980. 
  

  
\bibitem[JN]{JN} S. Jana, P. Nelson, \emph{Analytic newvectors for ${\rm GL}_n(\Bbb{R})$}, {\tt 	arXiv:1911.01880}.


  

\bibitem[Ju1]{Ju1} M. Jutila, \emph{Mean values of Dirichlet series via Laplace transforms},  Analytic number theory (Kyoto, 1996), 
  London Math. Soc. Lecture Note Ser. \textbf{247} (1997), 169-207 

\bibitem[Ju2]{Ju2} M. Jutila, \emph{The additive divisor problem and its analogs for Fourier coefficients of cusp forms. I},  Math. Z. \textbf{223} (1996),  435-461.




\bibitem[JM]{JM} M. Jutila, Y. Motohashi, \emph{Uniform bounds for Rankin-Selberg $L$-functions}, in: Multiple Dirichlet series, automorphic forms, and analytic number theory,  Proc. Sympos. Pure Math. \textbf{75} (2006), 243-256
  
   \bibitem[Ka]{Ka} I. Kaneko, \emph{Motohashi's formula for the fourth moment of individual Dirichlet $L$-functions and applications}, Forum Math. Sigma \textbf{10} (2022), Paper No. e41, 43 pp.
   
   \bibitem[Kw1]{Kw1} C.-H. Kwan, \emph{Spectral Moment Formulae for ${\rm GL}(3)\times {\rm GL}(2)$ $L$-functions I: the cuspidal case}, Algebra Number Theory (to appear)
   
   \bibitem[Kw2]{Kw2} C.-H. Kwan, \emph{Spectral Moment Formulae for ${\rm GL}(3)\times {\rm GL}(2)$ $L$-functions II: the Eisenstein case}, {\tt arXiv:2310.09419}

   
   \bibitem[Kw3]{Kw3}  C.-H. Kwan, \emph{Spectral Moment Formulae for ${\rm GL}(3)\times {\rm GL}(2)$ $L$-functions III: the twisted case}, {\tt arXiv:2311.15417}


   
  
\bibitem[MV]{MV} P.  Michel, A. Venkatesh,  \emph{The subconvexity problem for ${\rm GL}_2$}, Publ. Math. IHES  \textbf{111} (2010), 171-271 


 \bibitem[Mo1]{Mo} Y. Motohashi, \emph{The binary additive divisor problem},  Ann. Sci. \'Ecole Norm. Sup.  \textbf{27} (1994),  529-572.
 
\bibitem[Mo2]{Mo3} Y.  Motohashi, \emph{The mean square of Hecke $L$-series attached to holomorphic cusp-forms},  Kokyuroku Res. Inst. Math., Kyoto Univ., \textbf{886} (1994), 214-227.

\bibitem[Mo3]{Mo1} Y. Motohashi, \emph{Spectral theory of the Riemann zeta-function}, Cambridge Tracts in Mathematics \textbf{127}, Cambridge University Press, Cambridge, 1997

\bibitem[Mo4]{Mo5} Y. Motohashi, \emph{A note on the mean value of the zeta and $L$-functions. IX},   Proc. Japan Acad. Ser. A Math. Sci. \textbf{83} (2007),  73-78. 


\bibitem[Mo5]{Mo4} Y. Motohashi, \emph{A note on the mean value of the zeta and $L$-functions. XV},   Proc. Japan Acad. Ser. A Math. Sci. \textbf{75} (1999),  147-149. 



\bibitem[Ne1]{Ne}
P. D. Nelson, \emph{
Eisenstein series and the cubic moment for $\text{{\rm PGL}}_2$}, {\tt arxiv:1911.06310
}

\bibitem[NS]{NS} E. Nelson, W. F. Stinespring, \emph{Representation of elliptic operators in an enveloping algebra}, Amer. J. Math. \textbf{81} (1959), 547-560

\bibitem[NV]{NV} P. D. Nelson, A. Venkatesh, \emph{The orbit method and analysis of automorphic forms}, Acta Math. \textbf{226} (2021), 1-209.


\bibitem[Pe]{Pe} I. Petrow, \emph{A twisted Motohashi formula and Weyl-subconvexity for $L$-functions of weight two cusp forms}, 
  Math. Ann. \textbf{363} (2015), 175-216. 

\bibitem[PY1]{PY1} I. Petrow, M. Young, \emph{The Weyl bound for Dirichlet $L$-functions of cube-free conductor},  Ann. of Math. \textbf{192} (2020),  437-486



\bibitem[PY2]{PY2} I. Petrow, M. Young, \emph{The fourth moment of Dirichlet $L$-functions along a coset and the Weyl bound}, Duke Math. J. \textbf{172} (2023), 1879-1960


\bibitem[PR]{PR} D. Prasad, D. Ramakrishnan, \emph{On the global root numbers of ${\rm GL}(n)\times {\rm GL}(m)$}, Proc. Sympos. Pure Math. \textbf{66} (Part 2), 311-330 (1999)

\bibitem[SV]{SV} Y. Sakellaridis, A. Venkatesh, \emph{Periods and harmonic analysis on spherical varieties}, Ast\'erisque \textbf{396} (2017), 360 pp.

\bibitem[Sa]{Sa}  P. Sarnak, \emph{Estimates for Rankin-Selberg $L$-functions and quantum unique ergodicity},  J. Funct. Anal. \textbf{184} (2001), 419-453

\bibitem[Se]{Se} A. Selberg, \emph{On the estimation of Fourier coefficients of modular forms}, Proc. Sympos. Pure Math. \textbf{8} (1965), 1-15


\bibitem[To]{To} B. Topacogullari, \emph{The fourth moment of individual Dirichlet $L$-functions on the critical line}, Math. Z. \textbf{298} (2021),  577-624

\bibitem[Wal]{Wal}
    N. Wallach, \emph{Real reductive groups. II}, Pure and Applied Mathematics, 132-II. Academic Press, Inc.,
    Boston, MA, 1992. xiv+454 pp. 

  \bibitem[Wal2]{WallachArxiv} N. Wallach, \emph{The spherical Whittaker Inversion Theorem and the quantum non-periodic Toda Lattice}, {\tt arXiv:2303.11256}

  \bibitem[Wu1]{Wu} H. Wu, \emph{On Motohashi's formula}, Trans. AMS \textbf{375}  (2022), 8033-8081 

  \bibitem[Wu2]{Wu2} H. Wu, \emph{On a generalisation of Motohashi's formula}, {\tt arxiv:2310.08236}


    
  \end{thebibliography}
\end{document}